\renewcommand{\r}{\color{red}}
\pgfplotsset{compat=1.10}
\newcommand{\C}{\mathbb{C}}
\newcommand{\R}{\mathbb{R}}
\newcommand{\Z}{\mathbb{Z}}
\newcommand{\N}{\mathbb{N}}
\newcommand{\T}{\mathbb{T}}
\renewcommand{\O}{\mathcal{O}}
\newcommand{\Sc}{\mathcal{S}}
\newcommand{\fh}{\mathfrak{h}}
\newcommand{\bZ}{\mathbf{Z}}
\newcommand{\bY}{\mathbf{Y}}
\newcommand{\cB}{\mathcal{B}}
\newcommand{\sE}{\mathscr{E}}
\newcommand{\ta}{\texttt{a}}
\newcommand{\bta}{\mbox{\small$\ta$}}
\newcommand{\sta}{\mbox{\scriptsize$\ta$}}
\newcommand{\tc}{\texttt{c}}
\newcommand{\btc}{\mbox{\small$\tc$}}
\newcommand{\stc}{\mbox{\scriptsize$\tc$}}
\newcommand{\BL}[1]{\mathrm{BL}_{#1}}
\newcommand{\tang}{\mathrm{tang}}
\newcommand{\trans}{\mathrm{trans}}
\newcommand{\cell}{\mathrm{cell}}
\newcommand{\alg}{\mathrm{alg}}
\renewcommand{\le}{\leqslant}
\renewcommand{\ge}{\geqslant}
\renewcommand{\leq}{\leqslant}
\renewcommand{\geq}{\geqslant}
\newcommand{\bz}{\mathbf{z}}
\newcommand{\bx}{\mathbf{x}}
\newcommand{\by}{\mathbf{y}}
\newcommand{\dist}{\operatorname{dist}}
\newcommand{\maxdist}{\operatorname{maxdist}}
\theoremstyle{plain}
\newtheorem{theorem}{Theorem}[section]
\newtheorem{lemma}[theorem]{Lemma}
\newtheorem{proposition}[theorem]{Proposition}
\newtheorem{corollary}[theorem]{Corollary}
\newtheorem{conjecture}[theorem]{Conjecture}
\newtheorem*{first key estimate}{First key estimate}
\newtheorem*{second key estimate}{Second key estimate}
\theoremstyle{definition}
\newtheorem{definition}[theorem]{Definition}
\newtheorem{remark}[theorem]{Remark}
\newtheorem*{acknowledgement}{Acknowledgement}
\newtheorem*{induction hypothesis}{Induction hypothesis}
\newtheorem*{notation}{Notation}
\title[Improved bounds for the Kakeya maximal conjecture]{Improved bounds for the Kakeya maximal conjecture in higher dimensions}
\author{Jonathan Hickman}
\address{School of Mathematics, James Clerk Maxwell Building, The King's Buildings, Peter Guthrie Tait Road
Edinburgh, EH9 3FD, UK}
\email{jonathan.hickman@ed.ac.uk}
\thanks{Supported by the MINECO grants SEV-2015-0554 and MTM2017-85934-C3-1-P and the ERC grant 834728}
\author{Keith M. Rogers}
\address{Instituto de Ciencias Matem\'aticas CSIC-UAM-UC3M-UCM, Madrid 28049, Spain}
\email{keith.rogers@icmat.es}
\author{Ruixiang Zhang}
\address{Department  of  Mathematics,  University  of  Wisconsin-Madison, 480 Lincoln Dr, Madison, WI-53706, USA}
\email{rzhang347@wisc.edu}
\begin{document}
\begin{abstract}  We adapt Guth's polynomial partitioning argument for the Fourier restriction problem to the context of the Kakeya problem. By writing out the induction argument as a recursive algorithm, additional multiscale geometric information is made available. To take advantage of this, we prove that direction-separated tubes satisfy a multiscale version of the polynomial Wolff axioms. Altogether, this yields improved bounds for the Kakeya maximal conjecture in~$\mathbb{R}^n$ with $n=5$ or $n\ge 7$ and improved bounds for the Kakeya set conjecture for an infinite sequence of dimensions. 
\end{abstract}

\maketitle




\section{Introduction}

For $n \geq 2$ and small $\delta>0$, a \emph{$\delta$-tube} is a cylinder $T\subset \mathbb{R}^n$ of unit height and radius~$\delta$, with arbitrary position and arbitrary orientation $\mathrm{dir}(T)\in S^{n-1}$. A family~$\T$ of $\delta$-tubes is \emph{direction-separated} if $\{\mathrm{dir}(T) : T \in \T\}$ forms a $\delta$-separated subset of the unit sphere.

\begin{conjecture}[Kakeya maximal conjecture]\label{maximal conjecture} Let  $p\ge \frac{n}{n-1}$. For all $\varepsilon > 0$, there exists a constant $C_{\varepsilon, n}>0$ such that
\begin{equation}\label{maximal inequality}\tag{ $\mathrm{K}_p$}
\Big\| \sum_{T\in\mathbb{T}} \chi_T\Big\|_{L^p(\mathbb{R}^n)}\,\leq\,  C_{\varepsilon, n} \delta^{-(n-1 - n/p)-\varepsilon} \Big(\sum_{T \in \T} |T| \Big)^{1/p}
\end{equation}
whenever $0 < \delta < 1$ and $\mathbb{T}$ is a direction-separated family of $\delta$-tubes. 
\end{conjecture}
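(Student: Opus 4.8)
The plan is to establish the endpoint inequality \eqref{maximal inequality} at the critical exponent $p = \tfrac{n}{n-1}$; the full range $p \geq \tfrac{n}{n-1}$ then follows by interpolating with the trivial $L^\infty$ bound $\|\sum_{T \in \T}\chi_T\|_\infty \leq \#\T \lesssim \delta^{-(n-1)}$, valid for any direction-separated family. By the standard pigeonholing — restricting to the set where $\sum_{T}\chi_T$ is comparable to a fixed dyadic value $\lambda$, and to the subfamily of tubes meeting this set in comparable measure — the endpoint case reduces to the \emph{Kakeya counting inequality}: for every $\varepsilon > 0$, every direction-separated family $\T$ of $\delta$-tubes, and every $\lambda \geq 1$,
\[
\bigl|\{\,x \in \R^n : \textstyle\sum_{T \in \T}\chi_T(x) \geq \lambda\,\}\bigr| \;\lesssim_\varepsilon\; \delta^{-\varepsilon}\,\frac{\delta^{\,n-1}\#\T}{\lambda^{n/(n-1)}}.
\]
I would prove this by induction on the number of tubes — equivalently on $\log(1/\delta)$ — the engine being Guth's polynomial partitioning, implemented as the recursive algorithm of the present paper but now run at the critical exponent rather than a subcritical one.

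At each stage of the recursion, with $E_\lambda$ the current superlevel set, I would apply polynomial partitioning: pick $P$ of degree $D = D(\varepsilon)$ whose zero set $Z(P)$ splits $\R^n$ into $\sim D^n$ cells each carrying a $\sim D^{-n}$ fraction of $|E_\lambda|$. Either the \emph{cellular} case dominates — most of $|E_\lambda|$ lies in the cells, each cell meets $O(D^{1-n}\#\T)$ of the (still direction-separated) tubes, and one recurses into each cell — or the \emph{algebraic} case dominates, with most of $|E_\lambda|$ in the $\delta$-neighbourhood of $Z(P)$. In the algebraic case I would sort the tubes by the scale at which they become tangent to $Z(P)$ and feed them into the \emph{multiscale polynomial Wolff axioms} established earlier in this paper, which bound, at each intermediate scale $\rho \in [\delta,1]$, the number of tubes of $\T$ that can be packed into the $\rho$-neighbourhood of a degree-$D$ algebraic surface; summing these tangential contributions over $\rho$ and adding the contribution of $Z(P)$ itself yields the algebraic estimate.

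To close the induction one balances the two cases and optimises in $D$ at the end. The cellular branch multiplies the tube count by $\sim D^{1-n}$ per cell across $\sim D^n$ cells, and tracking the powers of $\delta$ and $\lambda$ one finds that the recursion is self-improving with target exponent exactly $\tfrac{n}{n-1}$ on $\lambda$ \emph{provided} the polynomial Wolff input is sharp — i.e.\ yields $\sim \rho^{\,n-1}\#\T$ tubes (up to $\delta^{-\varepsilon}$ losses and $D$-dependent constants) in a $\rho$-neighbourhood of an algebraic surface. Choosing $D$ large relative to $1/\varepsilon$ absorbs the accumulated constants, and unwinding down to the base scale $\delta \sim 1$ gives the counting inequality with the claimed $\delta^{-\varepsilon}$ loss.

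The hard part will be precisely this sharpness of the algebraic case. The multiscale axioms proved here control the tube count in a $\rho$-neighbourhood of a degree-$D$ variety by (morally) $\rho^{\,(n-1)\beta_n}\#\T$ with an exponent $\beta_n < 1$ — enough to beat the subcritical exponents of the main theorem, but not the full $\beta_n = 1$ needed to reach $\tfrac{n}{n-1}$. Upgrading to $\beta_n = 1$ in every dimension — equivalently, showing that a direction-separated bundle of tubes clustered near a variety is, for this counting, no denser than in the fully transverse case — is the crux of the conjecture, as it would rule out the ``plany'' concentration that is the only known obstruction to sharpness. An alternative to tightening the Wolff axioms would be to analyse the fine structure of near-extremisers directly — stickiness, graininess, and the structure induced on $\bigcup_{T \in \T} T$ — which in low dimensions can be shown to force such configurations to degenerate; carrying that structural analysis beyond three dimensions would close the gap by a different route, but at present has no higher-dimensional counterpart.
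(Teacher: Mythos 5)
The statement you are addressing is Conjecture~\ref{maximal conjecture}, which is an open problem: the paper does not prove it, and only establishes it in the restricted range \eqref{linear exponent} of Theorem~\ref{linear theorem}. Your proposal does not prove it either, and you concede as much in your final paragraph: the entire argument is conditional on the ``algebraic'' (tangential) branch of the partitioning recursion admitting a sharp, critical-exponent estimate, and you acknowledge that establishing this is ``the crux of the conjecture.'' A reduction of an open conjecture to an unproven statement of essentially the same strength is not a proof, so there is a genuine and decisive gap; everything before that point (interpolation with the trivial $L^\infty$ bound to pass from the endpoint to $p \geq \frac{n}{n-1}$, and pigeonholing to a superlevel-set counting inequality) is routine and does not touch the difficulty.

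Beyond the conceded gap, the way you frame the missing ingredient is also misleading. You suggest that what is lacking is sharpness of the polynomial Wolff input, i.e.\ that the multiscale axioms of Theorem~\ref{mainThm} carry an exponent $\beta_n<1$ where $\beta_n=1$ is needed. In fact Theorems~\ref{PWA} and~\ref{mainThm} are already sharp up to $\delta^{-\varepsilon}$ losses (take the $\bZ_j$ to be nested planes), and the paper uses exactly these sharp bounds yet still only reaches $p \geq 1+\min_k\max\{\frac{2n}{(n-1)n+(k-1)k},\frac{1}{n-k+1}\}$. The obstruction is structural rather than a deficit in the Wolff-type counting: the $k$-broad estimate of Theorem~\ref{main theorem} reaches the conjectured exponent only at $k=n$, while the passage from broad to linear estimates (Proposition~\ref{Bourgain--Guth}) requires $p \geq \frac{n-k+2}{n-k+1}$, which degenerates as $k$ grows; equivalently, when the tubes cluster near a low-dimensional variety the partitioning method gives no critical-exponent control, and no amount of tightening the cardinality bounds for tubes in neighbourhoods of varieties is currently known to repair this. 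So your assertion that the recursion is ``self-improving with target exponent exactly $\frac{n}{n-1}$ provided the polynomial Wolff input is sharp'' is unsupported and, on the evidence of this paper, false as stated: the sharp input is available and the induction still does not close at the endpoint. The alternative route you mention (structural analysis of near-extremisers, stickiness and graininess) is likewise only known in low dimensions, as you note. In short, the proposal is a plausible strategic outline of why the problem is hard, not a proof.
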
 

By an application of H\"older's inequality, one may readily verify that if \eqref{maximal inequality} holds for $p = \frac{n}{n-1}$, then, for all $\varepsilon>0$,  there exists a constant $c_{\varepsilon, n}>0$ such that
\begin{equation*}
    \big|\bigcup_{T \in \T} T \big| \,\geq\, c_{\varepsilon,n} \delta^{\varepsilon} \sum_{T \in \T} |T|.
\end{equation*}
This can be interpreted as the statement that any direction-separated family of  $\delta$-tubes is `essentially disjoint'. A more refined argument shows that if \eqref{maximal inequality} holds for a given $p$, then every Kakeya set in $\R^n$ (that is, every compact set that contains a unit line segment in every direction) has Hausdorff dimension at least~$p'$, the conjugate exponent of $p$. Thus, Conjecture~\ref{maximal conjecture} would imply the \emph{Kakeya set conjecture}, that Kakeya sets in~$\R^n$  have Hausdorff dimension~$n$; see, for instance, \cite{Bourgain1991a, Wolff1999, KT2002b}.

For $n=2$, the set conjecture was proven by Davies \cite{Davies1971} and the maximal conjecture was proven by C\'ordoba~\cite{Cordoba1977} in the seventies. Both conjectures remain challenging and important open problems in higher dimensions; for partial results, see~\cite{Drury1983, C1984, CDR1986, Bourgain1991a, Wolff1995, Schlag1998, TVV1998, Bourgain1999, KT1999, KLT2000, LT, KT2002,  BCT2006, Dvir2009, Guth2010, Guth2016b, GR, KZ2019} and references therein. 

In 1999, Bourgain~\cite{Bourgain1999} improved the state-of-the-art in higher dimensions using sum-difference theory from additive combinatorics. This technique was refined by Katz and Tao~\cite{KT1999, KT2002, KT2002b}, proving that Conjecture~\ref{maximal conjecture} is true in the range $p\ge 1+\frac{7}{4}\frac{1}{n-1}$. The purpose of the present article is to extend this range using a different approach.

\begin{theorem}\label{linear theorem} Conjecture \ref{maximal conjecture} is true in the range 
\begin{equation}\label{linear exponent}
p \geq 1+\min_{2\leq k\leq n}\max\Big\{\, \frac{2n}{(n-1)n + (k-1)k} , \,\frac{1}{n-k+1}\,\Big\}.
\end{equation}
\end{theorem}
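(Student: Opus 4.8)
The plan is to run a polynomial partitioning induction in the spirit of Guth's work on Fourier restriction, but formulated as an explicit recursive algorithm so that the geometric data produced at every scale is retained and reused. First I would linearise and discretise: fixing a direction-separated family $\T$ of $\delta$-tubes, pigeonhole $(\mathrm{K}_p)$ down to controlling $|\{x : \lambda \le \sum_{T \in \T}\chi_T(x) < 2\lambda\}|$ for a single dyadic height $\lambda$, equivalently the number of incidences between a refinement of $\T$ and a family of $\delta$-cubes on which the tube count is $\sim\lambda$. The engine that replaces the broad part of the restriction argument is the Bennett--Carbery--Tao multilinear Kakeya inequality $\BL{k}$: whenever several subfamilies of tubes point in quantitatively transverse directions, $\BL{k}$ gives a favourable bound, so the task reduces to the situation where the incidences are \emph{narrow}, i.e. essentially supported near a low-degree algebraic variety.

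Next I would introduce the partition itself. Apply polynomial partitioning to choose $P$ of degree $D$ (a small power of $\delta^{-1}$, to be optimised) whose zero set bisects the incidence measure at each of $O(\log D)$ stages, producing $\sim D^n$ cells, each carrying a $\sim D^{-n}$ fraction of the incidences. Sort the tubes into the \emph{cellular} family $\T_{\cell}$ of tubes meeting many cells and the \emph{algebraic} family $\T_{\alg}$ of tubes trapped in the $\delta$-neighbourhood of $Z(P)$. For the $\T_{\cell}$ contribution, each cell sees only $\sim D^{-n}|\T|$ tubes, still direction-separated after affine rescaling, so one invokes the inductive hypothesis on each cell at the coarser scale $\delta D$ and sums; the $D$-power gained beats the trivial count once $D$ is large.

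The heart of the matter, and where I expect the main difficulty, is the algebraic case. Here the tubes lie within $\delta$ of a variety $Z$ of dimension $k$ and degree $O(D)$, and one splits $\T_{\alg} = \T_{\trans} \cup \T_{\tang}$ into tubes transverse to $Z$ and tubes nearly tangent to $Z$ along essentially their whole length. The transverse piece $\T_{\trans}$ is handled by the $k$-linear estimate $\BL{k}$ localised to a tube neighbourhood of $Z$. For the tangential piece $\T_{\tang}$ the direction-separation must be upgraded to a quantitative non-concentration statement: this is the polynomial Wolff axiom, and what the recursion actually needs is a \emph{multiscale} version, asserting that for every intermediate scale $\delta \le \rho \le 1$ one has $\#\{T \in \T : T \subset N_\rho(Z)\} \lesssim_\varepsilon D^{O(1)}\rho^{k}\delta^{-(k-1)-\varepsilon}$. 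Proving this multiscale axiom — via a Wongkew-type volume bound for neighbourhoods of varieties combined with direction-separation — and then feeding it back into the recursion so that the tangential branch iterates without accumulating loss, is the crux of the proof.

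Finally I would carry out the bookkeeping. Running the recursion to termination (when $\delta$ reaches a fixed scale, or the algebraic dimension collapses), and summing contributions over all branches, produces for each choice of tangency dimension $k$ with $2 \le k \le n$ a bound whose exponent is controlled by $\max\bigl\{\tfrac{2n}{(n-1)n+(k-1)k}, \tfrac{1}{n-k+1}\bigr\}$: the first term is the gain extracted by polynomial partitioning against a $k$-dimensional variety, the second is the input from the multiscale Wolff axiom (equivalently a hairbrush-type estimate) at dimension $k$. Taking the best available $k$ — hence the minimum over $2 \le k \le n$ — and choosing $D$ a suitable power of $\delta^{-1}$ so that all $D^{O(1)}$ and $O(\log D)$ losses are absorbed into an arbitrarily small factor $\delta^{-\varepsilon}$, yields precisely the range \eqref{linear exponent}. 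The recurring obstacle throughout is making the recursion close: ensuring the proliferation of error terms across the $O(\log \delta^{-1})$ generations remains summable, and that the degree parameter can be chosen uniformly so that these errors genuinely collapse into the admissible $\varepsilon$-loss.
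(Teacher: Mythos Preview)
Your outline captures the overall architecture --- polynomial partitioning as an explicit recursion, dimension reduction in the tangential case, and a strengthened Wolff-type input --- but two structural points are off in ways that would block the argument.

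First, the constraint $\frac{1}{n-k+1}$ does \emph{not} come from the multiscale Wolff axiom or any hairbrush-type bound. In the paper it arises from the Bourgain--Guth mechanism (Proposition~\ref{Bourgain--Guth}) that converts a $k$-broad (or $k$-linear) estimate into a genuine linear $L^p$ estimate: one decomposes into broad and narrow parts, applies the $k$-broad bound to the former, and rescales the latter to invoke induction on scale; closing this induction forces $p \ge \frac{n-k+2}{n-k+1}$. You never isolate this step --- you fold broad/narrow into the partitioning recursion --- so there is no mechanism in your outline that actually produces the $\frac{1}{n-k+1}$ barrier. The multiscale Wolff axiom is instead what drives the \emph{first} term $\frac{2n}{(n-1)n+(k-1)k}$ inside the $k$-broad estimate.

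Second, your statement of the multiscale polynomial Wolff axiom is not the right one. What the recursion produces and needs (Theorem~\ref{mainThm}) is a bound on tubes that are \emph{simultaneously} trapped in $\rho$-neighbourhoods of a nested chain of varieties $\bZ_k,\dots,\bZ_m$ of dimensions $k,\dots,m$, localised to nested balls of radii $\lambda_k\le\dots\le\lambda_m$: the gain is a product $\prod_{j=k}^{m-1}(\rho/\lambda_j)\cdot(\rho/\lambda_m)^{n-m}$. This multiscale-in-\emph{dimension} statement is exactly the extra geometric information that the explicit recursion makes available (each pass through \texttt{[alg 1]} drops dimension by one and records a new variety/scale pair), and its proof via a new Wongkew-type volume bound for the associated semialgebraic sets is the technical core of the paper. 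Your version --- varying $\rho$ against a single variety --- is just the single-scale axiom of \cite{KR2018} and would only recover the weaker range of Theorem~\ref{PW linear theorem}.
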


When $k=n$, the first entry of the maximum of \eqref{linear exponent} takes the conjectured value; however, the second entry only reaches this value at the other extreme, when $k=2$. A reasonable compromise can be found by taking $k$ to be the closest integer to $(\sqrt{2}-1)n+1$, at which point we find, for instance, that the Kakeya maximal conjecture holds in the range\footnote{In all dimensions the range \eqref{linear exponent} is in fact strictly larger than \eqref{easy linear exponent}: the latter is included to provide a ready comparison with the maximal bounds from \cite{KT2002}.}
\begin{equation}\label{easy linear exponent}
p\ge 1+\frac{1}{2-\sqrt{2}}\frac{1}{n-1},
\end{equation}
which is an improvement over the Katz--Tao maximal bound \cite{KT2002}. See Figure~\ref{maximal} for the state-of-the-art in low dimensions. Theorem~\ref{linear theorem} also implies improved bounds for the Kakeya set conjecture in certain dimensions. Further discussion of the numerology is contained in the final section of the article.
\renewcommand{\arraystretch}{1.3}
 \begin{figure}
\begin{tabular}{ ||c|c|c||c|c|c|| } 
 \hline
  $n=$ & $p\ge$  &  & $n=$ & $p\ge $  &  \\ 
   \hline 
 2 &  \cellcolor{white} 2 &  \cellcolor{white} C\'ordoba~\cite{Cordoba1977}  &   9 &  \cellcolor{yellow!50} $6/5$ &  \cellcolor{yellow!50} Theorem~\ref{linear theorem}\\ 
  3 & \cellcolor{white} $5/3-\varepsilon$ &  \cellcolor{white} Katz--Zahl~\cite{KZ2019,KZ2}  &  10 & \cellcolor{yellow!50}   $13/11$ &  \cellcolor{yellow!50} Theorem~\ref{linear theorem}  \\
 4 &   \cellcolor{white} $1.4794...$ &    \cellcolor{white} Katz--Zahl~\cite{KZ2}  & 11 &  \cellcolor{yellow!50} $7/6$ &  \cellcolor{yellow!50} Theorem~\ref{linear theorem}\\  
 5 & \cellcolor{yellow!50} $18/13$ & \cellcolor{yellow!50} Theorem~\ref{linear theorem}    &  12 & \cellcolor{yellow!50}   $31/27$ &  \cellcolor{yellow!50} Theorem~\ref{linear theorem}   \\ 
 6 &  \cellcolor{white} $4/3$ &  \cellcolor{white} Wolff~\cite{Wolff1995}  &    13 &  \cellcolor{yellow!50} $106/93$ &  \cellcolor{yellow!50} Theorem~\ref{linear theorem}\\ 
 7 &  \cellcolor{yellow!50} $34/27$ &  \cellcolor{yellow!50} Theorem~\ref{linear theorem}  & 14 & \cellcolor{yellow!50}   $9/8$ &  \cellcolor{yellow!50} Theorem~\ref{linear theorem}   \\ 
 8 & \cellcolor{yellow!50}   $21/17$ &  \cellcolor{yellow!50} Theorem~\ref{linear theorem}   &   15 & \cellcolor{yellow!50}   $47/42$&  \cellcolor{yellow!50} Theorem~\ref{linear theorem}\\ 
  \hline
\end{tabular}
    \caption{The state-of-the-art for the Kakeya maximal conjecture in low dimensions. New results are \colorbox{yellow!50}{highlighted.}
    }\label{maximal}
    \end{figure}

The proof of Theorem~\ref{linear theorem} is based on the \textit{polynomial method}, which was introduced in the context of the Kakeya problem by Dvir in his celebrated proof~\cite{Dvir2009} of Wolff's finite field  Kakeya conjecture \cite{Wolff1999}. The polynomial method has been adapted to analyse Kakeya sets in Euclidean space in, for instance, works of  Guth \cite{Guth2010, Guth2016b} and Guth and Zahl \cite{GZ2018}. A key tool here is {\it polynomial partitioning}, introduced by Guth and Katz in their resolution of the two dimensional Erd\H{o}s distance conjecture~\cite{GK2015}. Of most relevance to the present article is the recent work of Guth~\cite{Guth2016,Guth2018} which adapted the partitioning technique to the context of the Fourier restriction problem. 

In \cite{Guth2016,Guth2018,GZ2018}, polynomial partitioning was used to study collections of direction-separated tubes. This led to the consideration of configurations of tubes that are partially contained in the neighbourhood of a real algebraic variety. Guth proved the following cardinality estimate for direction-separated tubes in three dimensions \cite[Lemma~4.9]{Guth2016} and conjectured that it should hold in higher dimensions \cite[Conjecture B.1]{Guth2018}. This was confirmed by Zahl~\cite{Zahl2018} in four dimensions and then in general by Katz and the second author~\cite{KR2018}.

\begin{theorem}[\cite{Guth2016, Zahl2018, KR2018}] \label{PWA} For all~$n\ge k\ge 1$, $d\ge 1$ and $\varepsilon>0$, there is a constant $C_{n,d,\varepsilon}>0$ such that
\begin{equation*}\#\Big\{\, T \in \mathbb{T} :  |T \cap B_{\lambda_k} \cap N_{\!\rho} \bZ_k|\ge \lambda_k|T| \,\Big\} \leq C_{n,d,\varepsilon} \Big(\frac{\rho}{\lambda_k}\Big)^{n-k}\delta^{-(n-1)-\varepsilon}
\end{equation*}
whenever $0<\delta\le \rho\le \lambda_k\le 1$, $\mathbb{T}$ is a direction-separated family of $\delta$-tubes and $\bZ_k\subset \mathbb{R}^n$ is a $k$-dimensional algebraic variety of degree $\le d$.
\end{theorem}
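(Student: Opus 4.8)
The plan is to first normalise away the parameter $\lambda_k$. Dilating $\R^n$ by $\lambda_k^{-1}$ sends $B_{\lambda_k}$ to the unit ball $B_1$, sends $\bZ_k$ to a $k$-dimensional variety of the same degree $\le d$, and sends $N_\rho\bZ_k$ to the $(\rho/\lambda_k)$-neighbourhood of the latter, while turning the $\delta$-tubes into $(\delta/\lambda_k)$-tubes whose directions are merely $\delta$-separated. Partitioning $\mathbb{T}$ into $O_n(\lambda_k^{-(n-1)})$ subfamilies, each genuinely direction-separated at the new radius, and summing the estimate over them, one finds — the powers of $\lambda_k$ working out with a spare factor $\lambda_k^\varepsilon\le 1$ — that it suffices to treat the case $\lambda_k=1$: if $\delta\le\rho\le1$ and $\mathbb{T}$ is a direction-separated family of $\delta$-tubes each of whose intersection with $B_1$ lies in $N_\rho\bZ_k$ for some $k$-dimensional variety $\bZ_k$ of degree $\le d$, then $\#\mathbb{T}\lesssim_{n,d,\varepsilon}\rho^{n-k}\delta^{-(n-1)-\varepsilon}$. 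Since a direction-separated family has at most $O_n(\delta^{-(n-1)})$ members in any case, the entire content is the extraction of the prefactor $\rho^{n-k}\le1$.

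I would establish this by inducting on the dimension $k$, the cases $k=0$ (no unit tube fits in a small ball, and the bound is void once $\rho\gtrsim1$) and $k=n$ (the trivial count above) being immediate. For the inductive step, exploit the algebraic geometry of $\bZ_k$ at scale $\rho$: a Gromov-type quantitative parametrisation together with effective B\'ezout should produce a subvariety $W\subset\bZ_k$ of dimension $\le k-1$ and degree $O_{n,d}(1)$ — capturing where $\bZ_k$ curves by more than $\rho$ at unit scale — off whose $\rho$-neighbourhood $\bZ_k$ breaks into pieces each contained in the $\rho$-slab $N_\rho(P)$ of a $k$-plane $P$. Tubes (or sub-tubes) trapped in $N_{C\rho}(W)$ are controlled by the inductive hypothesis in dimension $k-1$, giving the acceptable bound $O_{n,d,\varepsilon}(\rho^{n-k+1}\delta^{-(n-1)-\varepsilon})$; alternatively the same dimension drop can be implemented through a polynomial-partitioning dichotomy, cutting $\bZ_k$ by an auxiliary polynomial of bounded degree and separating the tubes near the $(k-1)$-dimensional transversal intersection from those near the lower-dimensional tangency locus. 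Wongkew's volume bound $\mathrm{vol}(N_\rho\bZ_k\cap B_1)\lesssim_d\rho^{n-k}$ is kept in reserve to run an $L^2$/C\'ordoba argument whenever a given scale is favourable. What remains is the \emph{flat} regime, where the geometry has been reduced to that of thin slabs around $k$-planes.

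This flat regime is the genuine difficulty. The $\rho$-flat pieces live at a scale like $\rho^{1/2}$, whereas the tubes have unit length, so a single tube threads through many pieces, and the constraint one reads off from any individual piece — that the direction lies within $\sim\rho^{1/2}$, rather than the needed $\sim\rho$, of some $k$-plane — is too weak; summing the naive per-piece direction counts overcounts by unacceptable powers of $\rho^{-1}$. The key must be to exploit the \emph{coherence} of a tube as it passes through consecutive pieces, namely that the successive local tangent $k$-planes it meets are themselves tied together by the bounded degree of $\bZ_k$. I expect this forces the whole argument to be carried out as a genuine multiscale induction — peeling off one dyadic scale between $\rho$ and $\delta$ at a time and re-running the dimensional decomposition at each scale — and the final, delicate step is purely quantitative: to check that the $O_{n,d}(1)$-type losses from each of the $\sim\log(\rho/\delta)$ stages, compounded with the constants from the algebraic decompositions, still multiply out to no worse than $\delta^{-\varepsilon}$, all while verifying that every reduction along the way (rescaling, cutting by polynomials, any generic projections) preserves direction-separation up to harmless constants.
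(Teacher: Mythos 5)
Your rescaling to $\lambda_k=1$ and the accompanying bookkeeping (splitting into $O(\lambda_k^{-(n-1)})$ subfamilies that are $\delta/\lambda_k$-separated, absorbing the spare $\lambda_k^{\varepsilon}$) is fine, and the trivial bound $\#\mathbb{T}\lesssim\delta^{-(n-1)}$ is correctly identified as the starting point. But the proof stops exactly where the theorem begins. The inductive step rests on an unproved structural claim — that after deleting a neighbourhood of a single subvariety $W$ of dimension $\le k-1$ and degree $O_{n,d}(1)$, the variety $\bZ_k$ splits into pieces each trapped in a $\rho$-slab of a $k$-plane — and, as you yourself note, even granting this the resulting ``flat regime'' is only addressed by the hope that ``the key must be to exploit the coherence'' of a tube across consecutive $\rho^{1/2}$-scale pieces and that a dyadic multiscale iteration with $\log(\rho/\delta)$ stages will lose no more than $\delta^{-\varepsilon}$. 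No mechanism is given for either step, and this is precisely the hard core of the statement: the flat/curved dichotomy in physical space is essentially Guth's original $n=3$ argument, and it is exactly this part that resisted generalisation to higher dimensions (hence the theorem being open until Zahl in $n=4$ and Katz--Rogers in general). A naive per-piece direction count loses the factor you need, and summing tangency constraints over $\rho^{-O(1)}$ flat pieces of a degree-$d$ variety is not controlled by any bounded-degree algebraic input you have set up. So as written this is a reduction of the theorem to its own main difficulty, not a proof.

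For comparison, the argument the present paper relies on (Katz--Rogers, reproduced in Section~3 here as the case $k=m$ of Theorem~\ref{mainThm}) avoids any flat/curved decomposition of $\bZ_k$ and works instead in the $2(n-1)$-dimensional parameter space of lines $l_{\mathbf{a},\mathbf{d}}$. The set of parameters $(\mathbf{a},\mathbf{d})$ for which $l_{\mathbf{a},\mathbf{d}}(I)\subseteq N_{\!\rho}\bZ_k\cap B_{\lambda_k}$ is semialgebraic of complexity $O_{n,d}(1)$ by Tarski--Seidenberg; one passes to a semialgebraic section keeping one position per direction, parametrises it via Gromov's algebraic lemma, replaces the $C^s$ charts by Taylor polynomials on cubes of side $\sim\delta^{\varepsilon/n}$, and then uses B\'ezout (the maps $F+tG$ are $O(s^{n-1})$-to-one) together with Wongkew's volume bound $|N_{\!\rho}\bZ_k\cap B_{\lambda_k}|\lesssim_d\lambda_k^{k}\rho^{n-k}$ to bound the measure of the set of admissible directions by $(\rho/\lambda_k)^{n-k}\delta^{-\varepsilon}$ (up to $\lambda$-factors); direction-separation then converts this into the cardinality bound upon dividing by $\delta^{n-1}$. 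If you want to complete your write-up, you would either have to supply the missing flat-regime mechanism (which is not available in the literature in dimensions $n\ge5$ along the lines you sketch) or switch to this parameter-space argument.
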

Here $N_{r}E$ denotes the $r$-neighbourhood of $E$ for any $r > 0$ and $E \subseteq \R^n$ and $B_{r}$ is a choice of ball in $\R^n$ of radius $r$. The relevant algebraic definitions are recalled in Section~\ref{Algebraic definitions section} below. In the language of \cite{Guth2018}, this theorem states that direction-separated tubes satisfy the \textit{polynomial Wolff axioms}; this terminology is recalled and discussed in further detail in the final section of the paper.

After adapting Guth's restriction argument \cite{Guth2016,Guth2018} to the context of the Kakeya maximal problem, one finds that Theorem~\ref{PWA} can be used to obtain improved bounds in certain intermediate dimensions: see the final section for more details. However, by rewriting Guth's induction argument as a recursive algorithm, one is readily able to take advantage of the the following strengthened version of Theorem~\ref{PWA}.

\begin{theorem}\label{mainThm}
For all $n\ge m\ge k\ge 1$, $d\ge 1$ and $\varepsilon>0$,  there is a constant $C_{n,d,\varepsilon}>0$ such that
\begin{equation*}
\#\bigcap_{j = k}^m \Big\{ T \in \mathbb{T} :  |T \cap B_{\lambda_j} \cap N_{\!\rho} \bZ_j|\ge \lambda_j|T|\Big\} \leq C_{n,d,\varepsilon}  \Big(\prod_{j = k}^{m-1} \frac{\rho}{\lambda_j}\Big) \Big(\frac{\rho}{\lambda_m}\Big)^{n-m}\delta^{-(n-1)-\varepsilon}
\end{equation*}
whenever $0<\delta\le \rho\le \lambda_k\le \ldots\le \lambda_m\le 1$, $\mathbb{T}$ is a direction-separated family of $\delta$-tubes,  $\bZ_j\subset \mathbb{R}^n$ are $j$-dimensional algebraic varieties of degree $\le d$ and the balls $B_{\lambda_j}$ are nested: $B_{\lambda_{k}}\subseteq \ldots \subseteq B_{\lambda_{m}}\subset \mathbb{R}^{n}$.
\end{theorem}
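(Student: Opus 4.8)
The plan is to argue by induction on the number of scales, $m-k+1$. The base case $m=k$ is exactly Theorem~\ref{PWA}, so all of the content lies in the inductive step: granting the estimate for the $(m-k)$-scale configuration obtained by discarding the outermost pair $(B_{\lambda_m},\bZ_m)$, I would show that reinstating the constraint $|T\cap B_{\lambda_m}\cap N_\rho\bZ_m|\ge\lambda_m|T|$ improves the count by a factor of $(\lambda_{m-1}/\lambda_m)^{n-m}$; a short computation shows this is precisely the ratio between the two claimed bounds.

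The first step is to record the geometry inside $B_{\lambda_m}$. Since $\bZ_m$ is $m$-dimensional of degree $\le d$, the standard bound on the volume of a $\lambda_{m-1}$-neighbourhood of a variety shows that $N_{C\lambda_{m-1}}\bZ_m\cap B_{\lambda_m}$ is covered by $O_d\big((\lambda_m/\lambda_{m-1})^m\big)$ balls of radius $\lambda_{m-1}$; a tube obeying the scale-$m$ constraint spends a fixed proportion of its length in $B_{\lambda_m}$ inside $N_\rho\bZ_m$, hence meets $\gtrsim\lambda_m/\lambda_{m-1}$ of these balls, one of which is (comparable to) the innermost ball $B_{\lambda_{m-1}}$. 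A naive double count over this collection — bounding the tubes meeting each ball by the inductive hypothesis, or by Theorem~\ref{PWA}, inside it — loses a factor of $(\lambda_m/\lambda_{m-1})^{n-1}$: it throws away both the organisation of the covering balls along $\bZ_m$ and the fact that a direction-separated family of $\delta$-tubes, once restricted to a ball of radius $\lambda_{m-1}$, is no longer direction-separated at that scale.

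To avoid this loss I would instead re-run the proof of Theorem~\ref{PWA} of \cite{KR2018} with the inner data carried along — equivalently, establish a version of Theorem~\ref{PWA} that is \emph{relative} to a fixed direction-separated family of tubes already obeying the constraints at scales $k,\dots,m-1$ against the nested balls $B_{\lambda_k}\subseteq\dots\subseteq B_{\lambda_{m-1}}$ and the varieties $\bZ_k,\dots,\bZ_{m-1}$. The algebraic engine of that proof — slicing $\bZ_m$ by generic affine subspaces, B\'ezout-type degree control, and an induction on dimension bounding how tangentially a direction-separated family can accumulate near a $j$-dimensional variety — would be re-run under this standing hypothesis; the base of that recursion, namely the number of tubes through a small ball that in addition obey every inner constraint, is then estimated by the inductive hypothesis rather than trivially, and propagating this through the recursion upgrades the output $(\rho/\lambda_m)^{n-m}$ into the full product $\big(\prod_{j=k}^{m-1}\rho/\lambda_j\big)(\rho/\lambda_m)^{n-m}$.

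The hard part is making this threading genuinely lossless. The delicate point is that passing from $(\bZ_m,B_{\lambda_m})$ to a slice or a sub-ball must not cost a power of $\lambda_{m-1}^{-1}$ from the degradation of direction-separation, which is why one keeps the localising balls in the statement (rather than rescaling) as in Theorem~\ref{PWA}; one must then verify that this remains compatible with invoking the inductive hypothesis at each intermediate step. A secondary nuisance is degree bookkeeping: each slice or projection inflates degrees by $O_d(1)$, so one must ensure that the auxiliary varieties produced along the way — and the $\bZ_j$ after any affine change of variables — retain degree bounded in terms of $d$ alone, uniformly in $m-k$.
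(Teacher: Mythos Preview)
Your outline has the right skeleton --- induct on the number of scales, base case Theorem~\ref{PWA}, gain the factor $(\lambda_{m-1}/\lambda_m)^{n-m}$ at each step --- and you correctly identify the enemy: the naive covering-plus-induction loses because direction-separation degrades on the sub-balls. But the proposed cure, ``re-run the proof of Theorem~\ref{PWA} with the inner data carried along,'' is not yet a plan, and your description of that proof (slicing $\bZ_m$ by generic affine subspaces, induction on dimension of the variety) is not what \cite{KR2018} actually does. The argument there parametrises the semialgebraic set of admissible lines via Tarski--Seidenberg sections and Gromov's algebraic lemma, approximates by polynomials, and converts the direction-count into a Jacobian integral. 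There is no obvious slot in that machinery where one can simply ``plug in'' the inductive hypothesis on the inner scales, because the inductive hypothesis is a \emph{cardinality} bound on tubes, not a statement about the parametrising maps.

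What is genuinely missing is the mechanism that controls how the family of lines obeying the constraints at scales $k,\dots,\ell$ \emph{spreads} as one moves from the interval $I_\ell$ (over $B_{\lambda_\ell}$) to a disjoint interval $J$ at distance $\sim\lambda_{\ell+1}$. The paper handles this by first passing to a volume problem for the semialgebraic set $S_\ell(J,\rho)$ of such line segments, and then proving (Lemma~\ref{32}) that
\[
|S_\ell(J,\rho)|\lesssim_d \rho^{-\varepsilon}\Big(\frac{|J|}{|I_\ell|}\Big)^n |S_\ell(I_\ell,2\rho)|.
\]
This is the step with real content: one takes a semialgebraic \emph{section} choosing a single direction for each position on a slice, parametrises via Gromov, approximates by polynomial maps $F,G$, and then uses an elementary but crucial inequality (Lemma~\ref{elem}) bounding $|\det DF|$ pointwise by an average of $|\det(DF+tDG)|$ over~$I_\ell$. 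Combined with a trigonometric covering step (Lemma~\ref{step 1 lemma}) and Wongkew's theorem applied on balls of the intermediate radius $\rho|J|/|I_\ell|$, this closes the induction on the \emph{volume} side; only afterwards does one feed the volume bound back through the \cite{KR2018} machinery to recover the tube count. Your proposal does not surface either of these two lemmas, and without them I do not see how to avoid the $(\lambda_m/\lambda_{m-1})^{n-1}$ loss you flagged.
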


Taking the varieties~$\bZ_j$ to be nested $j$-planes reveals that the cardinality estimate  of Theorem~\ref{mainThm} is sharp up to the  factor of $ C_{n,d,\varepsilon}\delta^{-\varepsilon}$.  The proof will follow the argument of \cite{KR2018} once a relevant Wongkew-type volume bound (in the spirit of \cite{Wongkew1993}) has been established. The mixture of trigonometric and algebraic arguments involved in the proof of this volume bound constitutes the most novel part of the article. 

\begin{remark} In a late stage of the development of this project, the authors discovered that J. Zahl has proved the same maximal results as Theorem~\ref{linear theorem} using similar methods. In particular, J. Zahl has independently established Theorem~\ref{mainThm} and, moreover, was able to use this result to prove a strengthened version of Theorem~\ref{main theorem} involving $k$-linear (as opposed to $k$-broad) estimates.
\end{remark}

The remainder of the article is organised as follows:
\vspace{0.5em}
\begin{itemize}
\item In Section \ref{notation section} some notational conventions are fixed.
\item In Section \ref{yes} the proof of Theorem~\ref{mainThm} is presented after first  establishing the relevant Wongkew-type volume bound.
    \item In Section \ref{reduction section} the proof of  Theorem~\ref{linear theorem} is reduced to estimating the so-called \emph{$k$-broad norms} for the Kakeya maximal function, paralleling work on oscillatory integrals from~\cite{BG2011, Guth2016, Guth2018}.
     \item In Section~\ref{k-broad norms section} basic properties of $k$-broad norms are reviewed.
    \item In Section~\ref{polynomial partitioning section} the polynomial partitioning theorem from~\cite{Guth2018} is recalled and applied to the $k$-broad norms.
      \item In Section~\ref{structure lemma section} the recursive algorithm is described,  culminating in a structural statement of algebraic nature for the Kakeya maximal problem. 
    \item In Section~\ref{proof section} the structural statement is combined with Theorem~\ref{mainThm} to conclude the proof of Theorem~\ref{linear theorem}. 
     \item In Section \ref{final remarks} the applications to the Kakeya set conjecture and other related problems are discussed. 
     \item Appended is a review of some facts from real algbraic geometry used in Section~\ref{yes}.
\end{itemize}

\vspace{0.5em}

\begin{acknowledgement} The first author thanks both Larry Guth and Joe Karmazyn for helpful discussions during the development of this project.  
\end{acknowledgement}




\section{Notational conventions}\label{notation section}

 We call an $n$-dimensional ball $B_r$ of radius~$r$ an {\it $r$-ball}. The intersection of $S^{n-1}$ with a ball is called a \textit{cap}. The $\delta$-neighbourhood of a set $E$ will be denoted by $N_{\;\!\!\delta}E$.

The arguments will involve the {\it admissible parameters} $n$, $p$ and $\varepsilon$ and the constants in the estimates will be allowed to depend on these quantities. Moreover, any constant is said to be \textit{admissible} if it depends only on the admissible parameters. Given positive numbers $A, B \geq 0$ and a list of objects $L$, the notation $A \lesssim_L B$, $B \gtrsim_L A$ or $A = O_L(B)$ signifies that $A \leq C_L B$ where $C_L$ is a constant which depends only on the objects in the list and the admissible parameters. We write $A \sim_L B$ when both $A \lesssim_L B$ and $B \lesssim_L A$.

 The cardinality of a finite set $A$ is denoted by $\#A$. A set $A'$ is said to be a \emph{refinement} of $A$ if $A' \subseteq A$ and $\#A' \gtrsim \#A$. In many cases it will be convenient to \emph{pass to a refinement} of a set $A$, by which we mean that the original set $A$ is replaced with some refinement.




\section{Multiscale polynomial Wolff axioms: Proof of Theorem~\ref{mainThm}}\label{yes}

 In this section we prove Theorem~\ref{mainThm}. A minor modification of the argument used to prove Theorem~\ref{PWA} in \cite{KR2018} reduces matters to establishing a ``Wongkew-type lemma''. The details of this reduction are described in Section~\ref{multiscale subsection} below. In the simplest case where $k=m$ (which corresponds to Theorem~\ref{PWA}), after the reduction all that is needed is Wongkew's original lemma \cite{Wongkew1993}, which is used to bound the volume of the semialgebraic set $\bZ_k \cap B_{\lambda_k}$. In the general case the problem is to obtain bounds for the volume of other semialgebraic sets $S_m(I_m, \rho)$ which do not fall directly under the scope of \cite{Wongkew1993}. These sets arise from the multiscale hypotheses and are defined in Section~\ref{Wongkew section}.




 \subsection{Algebraic definitions}\label{Algebraic definitions section} Before continuing, it is perhaps useful to clarify some of the terminology featured in the statement of Theorem~\ref{mainThm} and also in the proof. 
 
 \begin{definition}  A set $\bZ \subseteq \R^n$ will be referred to as a \emph{variety} if it can be expressed as $\bZ = Z(P_1, \dots, P_{r})$ for a collection of polynomials $P_i \colon \R^n \to \R$ for $1 \leq i \leq r$ where\footnote{Note that here, in contrast with much of the algebraic geometry literature, the ideal generated by the $P_i$ is not required to be irreducible.}
\begin{equation}\label{variety}
    Z(P_1, \dots, P_{r}) := \{\bx \in \R^n : P_1(\bx) = \cdots = P_{r}(\bx) = 0\}.
\end{equation}
\end{definition}

For the case of interest (namely, where $\bZ$ is a \textit{transverse complete intersection}: see Definition \ref{transverse complete intersection definition} below), $\bZ$ will always be a real smooth submanifold of $\R^n$. Here the \textit{dimension} $\dim \bZ$ is defined to be the dimension of $\bZ$ as a real smooth manifold. The results of this section hold for more general varieties which potentially admit singular points, with a suitably generalised definition of dimension, although we will not discuss the details of this definition here (see, for instance, \cite{BCR}).

\begin{definition}
Given a variety $\bZ$ the \emph{degree} of $\bZ$ is
\begin{equation*}
\deg\,\bZ := \inf \sum_{j=1}^{r} \deg P_j,
\end{equation*}
where the infimum is taken over all possible representations of $\bZ$ of the form \eqref{variety}. 
\end{definition}

The proof of Theorem~\ref{mainThm} will involve the analysis of a more general class of sets. 

\begin{definition} A set $S \subset \R^n$ is \emph{semialgebraic} if there exists a finite collection of polynomials $P_{i,j}$, $Q_{i,j} \colon \R^n \to \R$ for $1 \leq i \leq r$, $1 \leq j \leq s$ such that
\begin{equation}\label{semialgebraic set}
    S = \bigcup_{i=1}^r \big\{\bx \in \R^n : P_{i,1}(\bx) = \cdots = P_{i,s}(\bx) = 0,\, Q_{i,1}(\bx) > 0, \dots, Q_{i,s}(\bx) > 0\big\}.
\end{equation}
\end{definition}

\begin{definition} Given a semialgebraic set $S \subset \R^n$ the \textit{complexity} of $S$ is 
\begin{equation*}
    \inf\Big( \sum_{i,j} \deg P_{i,j} + \deg Q_{i,j} \Big)
\end{equation*}
where the infimum is taken over all possible representations of $S$ of the form \eqref{semialgebraic set}.
\end{definition}
 
 A number of fundamental results in the theory of semialgebraic sets will be used in the proof of Theorem~\ref{mainThm}, including the Tarski--Seidenberg projection theorem and Gromov's algebraic lemma. For the reader's convenience, the relevant statements are recorded in the appendix.




 \subsection{A Wongkew-type lemma}\label{Wongkew section} The main new ingredient in the proof of Theorem~\ref{mainThm} will be a bound for the Lebesgue measure of certain semialgebraic sets $S_{m}(I_m,\rho)$ given by unions of line segments. Before defining these sets some basic reductions are made and some useful notion is introduced.
 
 We choose our coordinates in such a way that the $\lambda_{m}$-ball $B_{\lambda_m}$ is centred at the origin and a reasonably large  proportion of our direction-separated $\delta$-tubes have core lines which can be  parametrised by
 \begin{equation*}
   l_{\mathbf{a},\mathbf{d}}(t) := (\mathbf{a},0)+t(\mathbf{d},1), \qquad t\in\mathbb{R},  
 \end{equation*}
   for some $\mathbf{a},\mathbf{d}\in[-1,1]^{n-1}$. Then, for each $j=k,\ldots,m$, we partition the orthogonal projection of $B_{\lambda_j}$ onto the $t$-axis into $4nd$ disjoint intervals $I_j\subset [-1,1]$ of length~$\lambda_j/(2nd)$, where $d$ bounds the degree of our varieties $\bZ_k,\ldots,\bZ_m$.

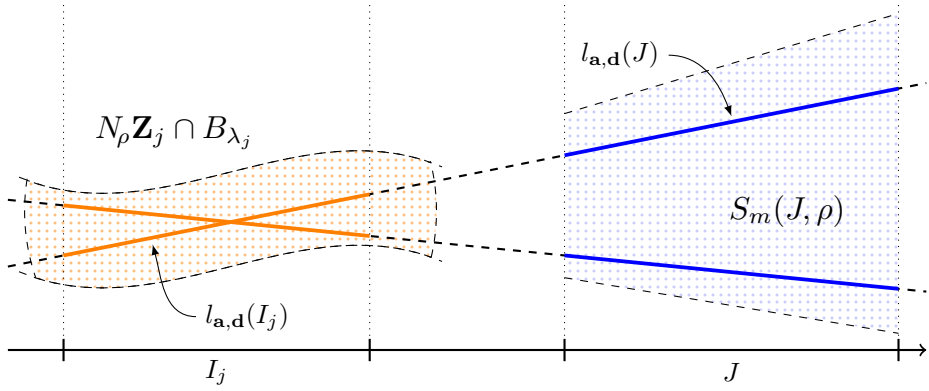
\begin{figure}
    \centering
 
 \begin{tikzpicture}[scale=0.74]

\clip (-4.2,-3.2) rectangle + (17,7);

\draw [black, dashed, domain=-4:4,name path=upper_A] plot (\x, {\x/4-\x/4*\x/4*\x/4 + 0.7}); 
\draw [black, dashed, domain=-4:4,name path=lower_A] plot (\x, {\x/4-\x/4*\x/4*\x/4 -1});

\tikzfillbetween[
    of=upper_A and  lower_A, on layer=bg] {pattern=dots, pattern color=orange!50};

\fill[fill=white] (3.7cm,0) arc [white,radius=3.7cm, start angle=0, delta angle=180]
                  -- (-4.2cm,0) arc [white, radius=4.2cm, start angle=178, delta angle=-180]
                  -- cycle;
\fill[fill=white] (3.7cm,0) arc [white,radius=3.7cm, start angle=0, delta angle=-180]
                  -- (-4.2cm,0) arc [white, radius=4.2cm, start angle=180, delta angle=180]
                  -- cycle;

\draw [black, dashed, domain=-13:14] plot ({3.7*cos(\x)}, {3.7*sin(\x)});
\draw [black, dashed, domain=171:199] plot ({3.7*cos(\x)}, {3.7*sin(\x)});

\draw [black, dashed, domain=-3.8:3.8] plot (\x, {\x/4-\x/4*\x/4*\x/4 + 0.7}); 
\draw [black, dashed, domain=-3.8:3.8] plot (\x, {\x/4-\x/4*\x/4*\x/4 -1}); 

\draw [black, thick, dashed, domain=-4:12.5] plot (\x, {\x/5 - 1/5});

\draw [orange, line width=0.5mm, domain=-3:2.5] plot (\x, {\x/5 - 1/5});
\draw [blue, line width=0.5mm, domain=6:12] plot (\x, {\x/5 - 1/5});

\draw [black, thick, dashed, domain=-4:12.5] plot (\x, {-\x/10 - 1/5});

\draw [orange, line width=0.5mm, domain=-3:2.5] plot (\x, {-\x/10 - 1/5});
\draw [blue, line width=0.5mm, domain=6:12] plot (\x, {-\x/10 - 1/5});

\draw[dotted] (-3, -2.7) -- (-3, 4);
\draw[dotted] (2.5, -2.7) -- (2.5, 4);
\draw[dotted] (6, -2.7) -- (6, 4);
\draw[dotted] (12, -2.7) -- (12, 4);

\draw[->, thick] (-4, -2.5) -- (12.5, -2.5);
\draw[-, thick] (-3, -2.7) -- (-3, -2.3);
\draw[-, thick] (2.5, -2.7) -- (2.5, -2.3);
\draw[-, thick] (6, -2.7) -- (6, -2.3);
\draw[-, thick] (12, -2.7) -- (12, -2.3);

\node[scale=1.2] at  (-1cm, 1.4cm) {$N_{\!\rho}\mathbf{Z}_j \cap B_{\lambda_j}$};

\node (I line) at  (0.3cm, -1.9cm) {$l_{\mathbf{a},\mathbf{d}}(I_j)$};
\draw[-latex] (I line) to[out=180,in=-75] (-1.4cm,-0.6cm);

\node at  (9cm, -2.9cm) {$J$};
\node at  (-0.25cm, -2.9cm) {$I_j$};

\draw [black, dashed, domain=6:12, name path=lower_S] plot (\x, {-\x/6 - 1/5});
\draw [black, dashed, domain=6:12, name path=upper_S] plot (\x, {0.3*\x - 0.05});
\tikzfillbetween[
    of=upper_S and  lower_S, on layer=bg] {pattern=dots, pattern color=blue!20};

\node[scale=1.2] at  (10cm, 0cm) {$S_{m}(J, \rho)$};
\node (J line) at  (7cm, 2.8cm) {$l_{\mathbf{a},\mathbf{d}}(J)$};
\draw[-latex] (J line) to[out=0,in=110] (9cm,1.7cm);
\end{tikzpicture}

    \caption{The set $S_m(J, \rho)$ is formed by a union of line segments $l_{\mathbf{a},\mathbf{d}}(J)$ which have the property that $l_{\mathbf{a},\mathbf{d}}(I_j) \subseteq N_{\!\rho}\mathbf{Z}_j \cap B_{\lambda_j}$ for $k \leq j \leq m$. }
    \label{set definition figure}
\end{figure}

Given any interval $J \subseteq \R$, we define
 \begin{equation*}
S_{m}(J,\rho):=\bigcap_{j=k}^m \big\{\, l_{\mathbf{a},\mathbf{d}}(t)  :    t \in J,\: (\mathbf{a},\mathbf{d})\in [-1,1]^{2(n-1)},\: l_{\mathbf{a},\mathbf{d}}(I_j)\subseteq N_{\!\rho} \bZ_j\cap B_{\lambda_j}\,\big\};  
\end{equation*}
see Figure~\ref{set definition figure} for a diagrammatic description of this set. The key problem will be to estimate the measure of these sets. Note that the measure of $S_{m}(I_m,\rho)$ depends on the specific choice of $I_k,\ldots,I_m${\r;} however{\r,} our bounds will be uniform over any choice and so we suppress this dependence in the notation. 
An example of such a bound follows from the $m$-dimensional version of Wongkew's theorem \cite{Wongkew1993} (see Theorem~\ref{Wongkew theorem} in the appendix), which immediately implies that \begin{equation}\label{wonk}
|S_m(I_m,\rho)| \le  |N_{\!\rho} \bZ_m \cap B_{\lambda_m}| \lesssim_d \lambda^m_m\rho^{n-m}.
\end{equation}
This estimate only uses the $m$-dimensional information, and our first task is to improve this bound using the additional lower dimensional information. 

In order to improve \eqref{wonk}, we will consider both $S_{\ell}(I_{\ell},\rho)$ and  $S_{\ell}(I_{\ell+1},\rho)$, the latter of which need not be contained in either $N_{\!\rho} \bZ_{\ell}\cap B_{\lambda_{\ell}}$ or $N_{\!\rho} \bZ_{\ell+1}\cap B_{\lambda_{\ell+1}}$. Roughly speaking, there are two steps to the argument:
\begin{itemize}
    \item[\textit{Step 1}:] We bound $|S_{\ell+1}(I_{\ell+1},\rho)|$ in terms of $|S_{\ell}(I_{\ell+1},2\rho)|$ using trigonometry and Wongkew's theorem \cite{Wongkew1993}.
    \item[\textit{Step 2}:] We bound $|S_{\ell}(I_{\ell+1},2\rho)|$ in terms of $|S_{\ell}(I_{\ell},4\rho)|$ using an algebraic argument that borrows ideas from \cite{KR2018}.  
\end{itemize}  
Iterating these steps yields a bound for $|S_m(I_m,\rho)|$ in terms of $|S_k(I_k,4^{m-k}\rho)|${\r,} at which point we can use the $k$-dimensional version of Wongkew's theorem rather than the $m$-dimensional version. 
The resulting bound is presented in the following lemma.

\begin{lemma}\label{mainlem} 
For all $n\ge m\ge k\ge 1$, $d\ge 1$ and $\varepsilon>0$, 
$$
\left|S_m(I_m,\rho)\right|\lesssim_d \rho^{-\varepsilon} \Big(\prod_{j=k}^{m-1} \frac{\rho}{\lambda_j}\Big) \lambda_m^{m}\rho^{n-m}$$
whenever $0<\rho/4\le \lambda_{k}\le\ldots \le\lambda_m\le1$, the $j$-dimensional varieties $\bZ_j\subset \mathbb{R}^n$ have degree $\le d$ and $B_{\lambda_{k}}\subset \ldots \subset B_{\lambda_{m}}\subset \mathbb{R}^{n}$.
\end{lemma}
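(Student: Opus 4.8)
\textbf{Proof strategy for Lemma~\ref{mainlem}.}
The plan is to induct on $m - k$, following the two-step scheme announced just before the statement. The base case $m = k$ is exactly \eqref{wonk}, which is the $k$-dimensional version of Wongkew's theorem applied to $\bZ_k \cap B_{\lambda_k}$. For the inductive step, I would fix $\ell$ with $k \le \ell < m$, assume the bound holds for the pair $(\ell, k)$ (with $\rho$ replaced by any constant multiple of $\rho$, which only costs an admissible factor), and deduce it for $(\ell+1, k)$. Combining with the trivial fact that $S_m(I_m,\rho) \subseteq S_{m}(I_{\ell+1},\rho) = \bigcap_{j=k}^{m}\{\dots\}$ restricted appropriately — more precisely, passing from $m$ down to $\ell+1$ one scale at a time — reduces everything to the single inductive increment, and iterating from $m$ down to $k$ introduces the product $\prod_{j=k}^{m-1}(\rho/\lambda_j)$ together with a harmless factor $4^{m-k}$ absorbed into $\rho^{-\varepsilon}$.

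\emph{Step 1 (trigonometry).} Here I want to bound $|S_{\ell+1}(I_{\ell+1},\rho)|$ by $(\rho/\lambda_\ell)\,|S_{\ell}(I_{\ell+1},2\rho)|$. The point is that a line segment $l_{\mathbf{a},\mathbf{d}}(I_{\ell+1})$ participating in $S_{\ell+1}$ satisfies $l_{\mathbf{a},\mathbf{d}}(I_\ell) \subseteq N_\rho \bZ_\ell$ and $l_{\mathbf{a},\mathbf{d}}(I_{\ell+1}) \subseteq N_\rho \bZ_{\ell+1}$, while the target point lies in the longer interval $I_{\ell+1}$ of length $\sim \lambda_{\ell+1}/(nd)$. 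Foliate the relevant region by the family of lines: over $I_\ell$ (length $\sim \lambda_\ell/(nd)$) each such line is trapped in $N_{2\rho}\bZ_\ell$, so the ``source'' part of the tube of lines lives in a set of measure $\lesssim |S_\ell(I_{\ell+1}, 2\rho)|$ after we note $S_{\ell+1}(I_{\ell+1},\rho) \subseteq S_\ell(I_{\ell+1},2\rho)$ as far as the $\ell$-level constraint goes; the extra factor $\rho/\lambda_\ell$ records the fact that a segment which is pinned to within $\rho$ of $\bZ_\ell$ over a $t$-interval of length $\lambda_\ell$ has its direction $\mathbf{d}$ constrained up to $O(\rho/\lambda_\ell)$ in the normal direction, so the union over the remaining directional freedom only inflates the measure of the $I_\ell$-slice by $O(\rho/\lambda_\ell)$ rather than $O(1)$ when we slide along to $I_{\ell+1}$. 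Quantitatively this is a change-of-variables / Fubini computation in the $(\mathbf{a},\mathbf{d},t)$ parametrisation combined with the $(\ell{+}1)$-dimensional Wongkew bound $|N_\rho \bZ_{\ell+1} \cap B_{\lambda_{\ell+1}}| \lesssim_d \lambda_{\ell+1}^{\ell+1}\rho^{\,n-\ell-1}$ to control the portion of the segment lying outside the $I_\ell$-slice.

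\emph{Step 2 (algebra).} Now I must bound $|S_\ell(I_{\ell+1},2\rho)|$ by $\rho^{-\varepsilon}|S_\ell(I_\ell,4\rho)|$. The obstruction is that $S_\ell(I_{\ell+1},2\rho)$ need not lie near $\bZ_\ell$ at all: a line constrained to be near $\bZ_j$ over $I_j \subseteq I_\ell$ for all $j \le \ell$ is being evaluated over the \emph{different} interval $I_{\ell+1}$. I would handle this exactly as in \cite{KR2018}: realise $S_\ell(I_{\ell+1},2\rho)$ as a semialgebraic set of bounded complexity (using Tarski--Seidenberg to eliminate the $(\mathbf{a},\mathbf{d})$ quantifiers and the bounded-degree descriptions of $N_\rho\bZ_j$), apply Gromov's algebraic lemma to cover it by a controlled number of graphs of bounded-complexity semialgebraic functions, and on each such piece run the comparison between evaluating the line over $I_{\ell+1}$ versus over $I_\ell$ — the lines are honest lines, so the two evaluations differ by an affine map of bounded norm, and a line near $\bZ_\ell$ on a subinterval of controlled length propagates to be within $4\rho$ of $\bZ_\ell$ on the original interval $I_\ell$ up to the usual loss. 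The $\rho^{-\varepsilon}$ is the standard price of iterating Gromov's lemma across the $m-k$ scales.

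\emph{Main obstacle.} The genuinely new difficulty — and, as the authors flag, the most novel part of the paper — is Step 1: cleanly quantifying how the near-containment of a straight line in $N_\rho\bZ_\ell$ over a $t$-window of length $\lambda_\ell$ forces the directional parameter $\mathbf{d}$ into an $O(\rho/\lambda_\ell)$-neighbourhood of an admissible set, and then propagating this to the longer window $I_{\ell+1}$ without losing the geometry of $\bZ_{\ell+1}$. This is where the mixture of trigonometric estimates (controlling the spread of a pencil of lines with nearly-fixed direction) and the algebraic volume bound of Wongkew has to be combined carefully; everything else is a bookkeeping induction on $m-k$ plus the by-now-standard semialgebraic machinery of \cite{KR2018}.
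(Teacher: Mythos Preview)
Your two--step scheme has the right shape, but both quantitative claims are wrong, and in combination they would prove a bound strictly \emph{stronger} than the lemma---which the paper notes is sharp for nested planes. The fatal error is Step~2: the inequality $|S_\ell(I_{\ell+1},2\rho)| \lesssim \rho^{-\varepsilon}|S_\ell(I_\ell,4\rho)|$ is simply false. The lines contributing to both sets are the same, but over the longer interval $I_{\ell+1}$ they fan out; already when $\bZ_\ell$ is an $\ell$-plane one computes $|S_\ell(I_{\ell+1},\rho)|/|S_\ell(I_\ell,\rho)|\sim(\lambda_{\ell+1}/\lambda_\ell)^n$. Your justification---``the two evaluations differ by an affine map of bounded norm''---is exactly where this breaks: that map has norm $\sim\lambda_{\ell+1}/\lambda_\ell$, not $O(1)$. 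The paper's Step~2 (Lemma~\ref{32}) \emph{accepts} this loss, proving $|S_\ell(J,\rho)| \lesssim_d \rho^{-\varepsilon}(|J|/|I_\ell|)^{n}|S_\ell(I_\ell,2\rho)|$. The mechanism is not an affine comparison of the sets but a semialgebraic section of the \emph{line set} $L_\ell$, polynomial approximation via Gromov's lemma, and then an elementary inequality (Lemma~\ref{elem}) comparing the Jacobian determinant $\det DF$ at one $t$-value against its $L^1$-average over $I_\ell$.

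Your Step~1 is also mis--specified. The gain is not the directional pinning factor $\rho/\lambda_\ell$; it comes from intersecting $S_\ell(J,\rho)$ with $N_{\!\rho}\bZ_{\ell+1}$. The trigonometric lever--arm argument (a line through the midpoint of $l_{\mathbf{a},\mathbf{d}}(I_\ell)$ perturbed at distance $|J|$ moves the $I_\ell$-segment by only $O(\rho)$) shows that $S_\ell(J,\rho)$ can be covered efficiently by balls of radius $\rho|J|/|I_\ell|$, and Wongkew applied on each ball gives the gain $(|J|/|I_\ell|)^{\ell+1-n}$ (Lemma~\ref{step 1 lemma}). Combined with the Step~2 loss $(|J|/|I_\ell|)^{n}$, the net factor is $(|J|/|I_\ell|)^{\ell+1}$; summed over a dyadic decomposition of $I_{\ell+1}$ this is $\sim(\lambda_{\ell+1}/\lambda_\ell)^{\ell+1}$, which against the inductive bound for $|S_\ell(I_\ell,4\rho)|$ closes the induction exactly.
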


 Taking the $j$-dimensional varieties~$\bZ_j$  to be nested $j$-planes reveals that the estimate is sharp up to the factor of $C_{ n,d,\varepsilon}\rho^{-\varepsilon}$.

 \begin{proof}[Proof (of Lemma \ref{mainlem})] The proof is somewhat involved and is broken into stages.
 
 \subsubsection*{Initial reductions} We may assume without loss of generality that 
\begin{equation}\label{non-degenerate rho}
    \rho\le 4^{k-m+1}\lambda_k. 
\end{equation}
Indeed, otherwise there exists a largest $k'$ such that $k + 1 \leq k' \leq m + 1$ and $\rho > 4^{k-m+1}\lambda_j$ for all $k \leq j \leq k' - 1$. If $k' = m+1$, then the result is trivial. If $k' < m+1$, then we may drop the $j$th condition in $S_m(I_m,\rho)$ for $k \leq j \leq k' - 1$. Relabelling $k'$ as $k$, \eqref{non-degenerate rho} now holds.

It will also be useful to assume that the intervals $I_j$ have lengths given by some dyadic number: that is,
\begin{equation}\label{dyadic intervals}
\frac{\lambda_j}{2nd} \in 2^{\Z}.
\end{equation}
This is possible by slightly enlarging the set by appropriately rounding up the $\lambda_j$'s.

 \subsubsection*{Setting up the induction} For all $k \leq \ell \leq m$ we will prove that
\begin{equation}\label{induction hypothesis}
\left|S_{\ell}(I_{\ell},\rho)\right|\lesssim_d \rho^{-\varepsilon}\Big(\prod_{j = k}^{\ell -1}\frac{\rho}{\lambda_j}\Big)\lambda_{\ell}^{\ell} \rho^{n - \ell}
\end{equation}
whenever $\rho\le 4^{k-\ell+1}\lambda_k$ and the $\lambda_j$ satisfy \eqref{dyadic intervals}. To do this, we induct on $\ell$. For technical reasons, it will be useful to slightly enlarge the sets by redefining 
\begin{equation*}
 S_{\ell}(J,\rho):=\bigcap_{j=k}^{\ell} \big\{\, l_{\mathbf{a},\mathbf{d}}(t)  :    t \in J,\: (\mathbf{a},\mathbf{d})\in Q^{2(n-1)}(\rho),\: l_{\mathbf{a},\mathbf{d}}(I_j)\subseteq N_{\!\rho} \bZ_j\cap B_{\lambda_j+\rho}\,\big\}
\end{equation*}
where $Q^{2(n-1)}(\rho) := [-1-\rho,1+\rho]^{2(n-1)}$. Clearly, any bound of the form \eqref{induction hypothesis} for these enlarged sets implies the same bound holds for the original $ S_{\ell}(I_{\ell},\rho)$.

By the $k$-dimensional version of Wongkew's theorem \cite{Wongkew1993} (see Theorem~\ref{Wongkew theorem}),
$$
\left|S_k(I_k,\rho)\right|\lesssim_d \lambda_{k}^{k}\rho^{n-k}
$$
whenever $\rho\le 4\lambda_k$ and this serves as the base case for the induction argument.

Assuming \eqref{induction hypothesis} holds for some $k \leq \ell \leq m-1$, it suffices to prove that
\begin{equation*}
\left|S_{\ell+1}(I_{\ell+1},\rho)\right|\lesssim_d \rho^{-\varepsilon}\Big(\frac{\lambda_{\ell+1}}{\lambda_{\ell}}\Big)^{{\ell+1}}\left|S_{\ell}(I_{\ell},4\rho)\right|
\end{equation*}
whenever $\rho\le 4^{k-\ell}\lambda_k$ and $\lambda_j/(2nd) \in 2^{\Z}$. We may also assume the non-degeneracy hypothesis that  
\begin{equation}\label{non-degenerate S}
|S_{\ell+1}(I_{\ell+1},\rho)|\ge 8 \Big(\prod_{j = k}^{\ell }\frac{\rho}{\lambda_j}\Big)\lambda_{\ell+1}^{\ell+1} \rho^{n - \ell-1}\ge  8\lambda_{\ell+1}\rho^{n-1},
\end{equation}
as otherwise the induction step would have closed already.

\subsubsection*{Dyadic decomposition} Recall from our initial reductions that the $I_j$ are dyadic intervals. To prove the induction step we partition $I_{\ell+1}$ into the part close to $I_\ell$,
\begin{equation}\label{close intervals}
   \{ t\in I_{{\ell+1}}\, :\, \dist(t,I_{\ell})\le |I_{\ell}|\}, 
\end{equation}
 and dyadic parts further from $I_\ell$,
 \begin{equation}\label{far intervals}
 \big\{\, t\in I_{{\ell+1}}\, :\, 2^{i}|I_\ell|\le\dist(t,I_{\ell})\le 2^{i+1}|I_{\ell}|\,\big\},\quad i\ge 0.     
 \end{equation}
Let $\mathcal{J}$ denote the collection of all maximal dyadic subintervals of the sets in \eqref{close intervals} or \eqref{far intervals}. We have
 \begin{equation*}
 S_{\ell+1}(I_{\ell+1},\rho)= \bigcup_{J \in \mathcal{J}} S_{\ell+1}(J,\rho) \subset \bigcup_{J \in \mathcal{J}} S_{\ell}(J,\rho)\cap N_{\!\rho} \bZ_{{\ell+1}},
\end{equation*}
 where the final inclusion follows directly from the definitions. Since the $J \in \mathcal{J}$ are contained in $I_{\ell+1}$ and are pairwise disjoint,
 \begin{equation*}
     |S_{\ell+1}(I_{\ell+1},\rho)| \leq 4\lambda_{\ell+1}\rho^{n-1} +  \sum_{\substack{J \in \mathcal{J} \\ | S_{\ell}(J,\rho)| \geq 4|J|\rho^{n-1} }}  |S_{\ell}(J,\rho)\cap N_{\!\rho} \bZ_{{\ell+1}}|.
 \end{equation*}
By \eqref{non-degenerate S}, the first term on the right-hand side of the above display is at most half the term on the left-hand side. Thus, it suffices to estimate the right-hand sum.
 
 Given that the balls are nested, $B_{\lambda_{k}}\subset \ldots \subset B_{\lambda_{m}}\subset \mathbb{R}^{n}$, we have   \begin{equation*}
 \maxdist(I_\ell,J):=\sup\big\{\, |t-t'|\, :\, t\in I_\ell,\ t'\in J\,\big\}\lesssim \lambda_{\ell+1},
 \end{equation*}
so there are no more than  $2\log(\lambda_{\ell+1}/|I_{\ell}|)\lesssim_d \log (\rho^{-1})$ intervals $J \in \mathcal{J}$. Thus, it will suffice to prove  that
 \begin{equation}\label{wanted}
\left|S_{\ell}(J,\rho)\cap N_{\!\rho} \bZ_{{\ell+1}}\right|\lesssim_d \rho^{-\varepsilon}\Big(\frac{|J|}{|I_{ \ell}|}\Big)^{{\ell+1}}\left|S_{\ell}(I_{ \ell},4\rho)\right|,
\end{equation}
whenever $J \in \mathcal{J}$ satisfies $|S_{\ell}(J,\rho)| \geq 4|J|\rho^{n-1}$.

\subsubsection*{Inductive step: the first bound} We now turn to the precise version of Step 1 from the proof sketch at the beginning of the section.

\begin{lemma}\label{step 1 lemma} If $J \in \mathcal{J}$ satisfies $\dist(I_{\ell}, J) \geq |J|$, then
\begin{equation*}
    |S_{\ell}(J,\rho)\cap N_{\!\rho} \bZ_{{\ell+1}}|\lesssim_d \Big(\frac{|J|}{|I_{ \ell}|}\Big)^{\ell+1-n}|S_{\ell}(J,2\rho)|.
\end{equation*}
\end{lemma}

\begin{proof} We first claim that it is possible to cover  $S_{\ell}(J,\rho)$ by a collection $\mathcal{B}$ of balls of radius $\rho|J|/|I_{ \ell}|$ with cardinality
\begin{equation}\label{cardinality bound}
\# \mathcal{B} \lesssim_d \Big(\frac{|I_{\ell}|}{\rho|J|}\Big)^n|S_{\ell}(J,2\rho)|.
\end{equation}
Temporarily assuming that this is so, one may argue as follows. For each of the balls $B \in \mathcal{B}$ one may apply Wongkew's theorem \cite{Wongkew1993} (see Theorem~\ref{Wongkew theorem}) to deduce that
\begin{equation*}
    |B \cap N_{\!\rho} \bZ_{{\ell+1}}| \lesssim_d \Big(\frac{\rho|J|}{|I_{\ell}|}\Big)^{{\ell+1}}\rho^{n-(\ell+1)}.
\end{equation*}
 Thus, by \eqref{cardinality bound}, altogether we find that 
\begin{align*}
|S_{\ell}(J,\rho)\cap N_{\!\rho} \bZ_{{\ell+1}}| & \le \sum_{B \in \mathcal{B}} |B\cap N_{\!\rho} \bZ_{{\ell+1}}| \\
&\lesssim_d \Big(\frac{\rho|J|}{|I_{ \ell}|}\Big)^{{\ell+1}}\rho^{n-(\ell+1)}\Big(\frac{|I_{ \ell}|}{\rho|J|}\Big)^n|S_{\ell}(J,2\rho)|,
\end{align*}
as desired.

\begin{figure}
\centering

\begin{tikzpicture}[scale=0.74]
 
\clip (-3.5,-3.6) rectangle + (17,6.3);

\draw [black, thick, dashed, domain=-2.5:12.5] plot (\x, {\x/5 - 1/5});

\draw [orange, line width=0.5mm, domain=-2:2.5] plot (\x, {\x/5 - 1/5});
\draw [blue, line width=0.5mm, domain=7:12] plot (\x, {\x/5 - 1/5});

\draw [black, thick, dashed, domain=-2.5:12.5] plot (\x, {-\x/10 - 1/10});

\draw [orange, line width=0.5mm, domain=-2:2.5] plot (\x, {-\x/10 - 1/10});
\draw [blue, line width=0.5mm, domain=7:12] plot (\x, {-\x/10 - 1/10});

\draw[black, dotted] (-2, -2.7) -- (-2, 4);
\draw[black, dotted] (2.5, -2.7) -- (2.5, 4);
\draw[black, dotted] (7, -2.7) -- (7, 4);
\draw[black, dotted] (12, -2.7) -- (12, 4);

\draw[->, black, thick] (-3, -2.5) -- (12.5, -2.5) node [right] {$x_n$};
\draw[-, black, thick] (-2, -2.7) -- (-2, -2.3);
\draw[-, black, thick] (2.5, -2.7) -- (2.5, -2.3);
\draw[-, black, thick] (7, -2.7) -- (7, -2.3);
\draw[-, black, thick] (12, -2.7) -- (12, -2.3);

\node at (-1cm,-1cm) {$l_{\mathbf{a},\mathbf{d}}(I_{\ell})$};
\node at (-1cm,0.5cm) {$l_{\mathbf{\tilde{a}},\mathbf{\tilde{d}}}(I_{\ell})$};

\node at  (9.5cm, -2.9cm) {$J$};
\node at  (0.25cm, -2.9cm) {$I_{\ell}$};

\draw[<->, black, thick] (-2.4,-0.5) -- (-2.4,0) node [scale=1.2, midway, left] {$\lesssim \rho$};
\draw[<->,black, thick] (9.5,-0.9) -- (9.5,1.5) node [scale=1.2, midway, right] {$\lesssim \rho \displaystyle \frac{|J|}{|I_{\ell}|}$};
\draw[<->, black, thick] (2.7,-2.9) -- (6.8,-2.9) node [scale=1.2, midway, below] {$\sim |J|$};

\node[circle,fill=black,inner sep=0pt,minimum size=4pt, label=below:{$\mathbf{y} \in l_{\mathbf{\tilde{a}},\mathbf{\tilde{d}}}(J)$}] at (9.5, -9.5*1/10 - 1/10) {};
\node[circle,fill=black,inner sep=0pt,minimum size=4pt, label=above:{$\mathbf{x} \in l_{\mathbf{a},\mathbf{d}}(J)$}] at (9.5, 9.5*1/5 - 1/5) {};

\node[circle,fill=black,inner sep=0pt,minimum size=4pt, label=below:{$\mathbf{z}$}] at (1/3, 1/15 - 1/5) {};

\end{tikzpicture}
\caption{The trigonometric argument. } 
\label{trigonometric figure}
\end{figure}
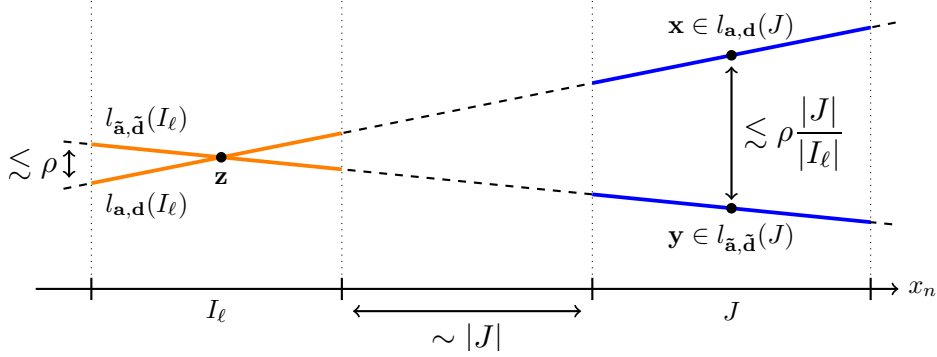

It remains to verify the claim. Letting $r_{\ell} := \rho|J|/(4nd|I_{\ell}|)$, by an elementary covering argument it suffices to show that 
\begin{equation}\label{covering argument}
    N_{r_{\ell}}S_{\ell}(J, \rho) \cap \big( \R^{n-1} \times J \big) \subseteq S_{\ell}(J, 2\rho).
\end{equation}
Fix a point $\by \in N_{r_{\ell}}S_{\ell}(J, \rho) \cap \big( \R^{n-1} \times J \big)$ so there exists some $\bx \in S_{\ell}(J, \rho)$ with $|\bx - \by| < r_{\ell}$. Furthermore, by the definition of $S_{\ell}(J, \rho)$, there exists some $(\mathbf{a}, \mathbf{d}) \in Q^{2(n-1)}(\rho)$ and $t_0 \in J$ such that $\bx = l_{\mathbf{a}, \mathbf{d}}(t_0)$ and $l_{\mathbf{a},\mathbf{d}}(I_j) \subseteq N_{\!\rho} \bZ_j \cap B_{\lambda_j}$.  Let $\bz$ denote the midpoint of the line segment $l_{\mathbf{a},\mathbf{d}}(I_{\ell})$ and $\theta$ the angle $\angle \bx\bz\by$; see Figure~\ref{trigonometric figure}. The separation between $J$ and $I_{\ell}$ implies that $|\bx - \bz|, |\by - \bz| > |J|$ and therefore 
\begin{equation}\label{angle bound}
    |\tan \theta| \leq \frac{r_{\ell}}{|J|} = \frac{1}{4nd} \cdot \frac{\rho}{|I_{\ell}|}.
\end{equation}
The line passing through $\bz$ and $\by$ can be parametrised by $t \mapsto l_{\mathbf{\tilde{a}},\mathbf{\tilde{d}}}(t)$ for some choice of $(\mathbf{\tilde{a}},\mathbf{\tilde{d}}) \in Q^{2(n-1)}(2\rho)$ and $\by = l_{\mathbf{\tilde{a}}, \mathbf{\tilde{d}}}(t_1)$ for some $t_1 \in J$. Moreover, the angle bound \eqref{angle bound} implies that the segment $l_{\mathbf{\tilde{a}},\mathbf{\tilde{d}}}(I_j)$ is contained in a $\rho$-neighbourhood of $l_{\mathbf{a},\mathbf{d}}(I_j)$ for $k \leq j \leq \ell$. Thus, $l_{\mathbf{\tilde{a}},\mathbf{\tilde{d}}}(I_j) \subseteq N_{2\rho}\bZ_j \cap B_{\lambda_j + 2\rho}$ for $k \leq j \leq \ell$ and, consequently,
\begin{equation*}
    \by \in l_{\mathbf{\tilde{a}},\mathbf{\tilde{d}}}(J) \subseteq S_{\ell}(J, 2\rho).
\end{equation*}
This establishes \eqref{covering argument} and concludes the proof.
\end{proof}

\subsubsection*{Inductive step: the second bound} We now turn to the precise version of Step 2 from the proof sketch at the beginning of the section. Loosely speaking, the following lemma tells us that our line segments can never expand at an unexpectedly fast  rate, even after leaving the constricted region.

\begin{lemma}\label{32} If $J \in \mathcal{J}$ satisfies  $|S_{\ell}(J,\rho)|\ge 4|J|\rho^{n-1}$, then
\begin{align*}
|S_{\ell}(J,\rho)| \lesssim_{d}\rho^{-\varepsilon}\Big(\frac{|J|}{|I_{\ell}|}\Big)^{n}|S_{\ell}(I_{\ell},2\rho)|.
\end{align*}
\end{lemma}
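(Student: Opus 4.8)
The idea is that the line segments comprising $S_\ell(J,\rho)$ constitute a semialgebraic family of bounded complexity, so that the rate at which their cross-sections grow as one moves along the $x_n$-axis is constrained by polynomials of degree $O_d(1)$; combined with the rigidity of straight lines, this forces the growth from the window $I_\ell$ to the window $J$ to be no worse than $(|J|/|I_\ell|)^n$.

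The plan is first to reformulate the estimate in terms of slices. Write $S_\ell(J,\rho) = \bigcup_{(\mathbf{a},\mathbf{d}) \in \mathcal{A}} l_{\mathbf{a},\mathbf{d}}(J)$, where $\mathcal{A}$ consists of those $(\mathbf{a},\mathbf{d}) \in Q^{2(n-1)}(\rho)$ with $l_{\mathbf{a},\mathbf{d}}(I_j) \subseteq N_{\!\rho}\bZ_j \cap B_{\lambda_j+\rho}$ for all $k \leq j \leq \ell$. Each defining condition is of the form ``$l_{\mathbf{a},\mathbf{d}}(t)$ lies in a fixed semialgebraic set for every $t \in I_j$'', so eliminating the universally quantified variable $t$ via the Tarski--Seidenberg projection theorem shows that $\mathcal{A} \subseteq \R^{2(n-1)}$ is semialgebraic of complexity $O_d(1)$. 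For each $t$ the cross-section $S_\ell(J,\rho) \cap (\R^{n-1} \times \{t\})$ is the image $L_t(\mathcal{A})$ of $\mathcal{A}$ under the affine map $L_t(\mathbf{a},\mathbf{d}) := \mathbf{a} + t\mathbf{d}$, while $S_\ell(I_\ell,2\rho) \cap (\R^{n-1} \times \{t\})$ contains $L_t(\mathcal{A})$ for each $t \in I_\ell$. Hence it suffices to prove
\begin{equation*}
\int_J |L_t(\mathcal{A})|\,\mathrm{d}t \lesssim_d \rho^{-\varepsilon} \Big(\frac{|J|}{|I_\ell|}\Big)^n \int_{I_\ell} |L_t(\mathcal{A})|\,\mathrm{d}t.
\end{equation*}

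The main step is to control how $|L_t(\mathcal{A})|$ varies with $t$. Using Gromov's algebraic lemma (recalled in the appendix) one covers $\mathcal{A}$ by $O_d(1)$ charts $\phi \colon (0,1)^e \to \mathcal{A}$, $e = \dim \mathcal{A}$, with $\|\phi\|_{C^1} \lesssim_d 1$; then, up to a controlled $\rho$-neighbourhood loss handled by Wongkew's theorem (Theorem~\ref{Wongkew theorem}), $|L_t(\mathcal{A})|$ is governed chart by chart by the $(n-1)$-dimensional Jacobian factor of $u \mapsto L_t(\phi(u))$. Since $D_u[L_t \circ \phi] = D\mathbf{a}_\phi + t\, D\mathbf{d}_\phi$ depends affinely on $t$ with coefficients of size $O_d(1)$, the square of this Jacobian factor is, for each fixed $u$, a non-negative polynomial in $t$ of degree $\le 2(n-1)$ with coefficients of size $O_d(1)$. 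The crucial observation is that a straight line is determined by its restriction to any subinterval: if the Jacobian factor vanishes identically for $t \in I_\ell$ then the corresponding piece of the family is lower-dimensional for every $t$, hence contributes nothing. For the non-degenerate pieces one is reduced to a growth estimate of Remez--Bernstein type comparing $\int_J$ and $\int_{I_\ell}$ of such a polynomial, for which one uses that the dyadic decomposition places $I_\ell$ and $J$ in comparable position (so that $\dist(I_\ell,J) \lesssim \max(|I_\ell|,|J|)$) together with the non-degeneracy hypothesis $|S_\ell(J,\rho)| \ge 4|J|\rho^{n-1}$, which prevents the cross-sections from being too close to lower-dimensional. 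A dyadic pigeonholing over the possible sizes of $|L_t(\mathcal{A})|$ produces the claimed $\rho^{-\varepsilon}$ factor.

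The hard part is precisely this last comparison: a priori a polynomial of degree $O_d(1)$ can be extremely small on $I_\ell$ while large on $J$ (Chebyshev behaviour), so one cannot compare $\int_J$ with $\int_{I_\ell}$ naively. It is the combination of the rigidity of lines, the relative position of the intervals furnished by the dyadic decomposition, the passage to the slightly dilated set $S_\ell(I_\ell,2\rho)$, and the non-degeneracy hypothesis --- rather than any single one of these --- that closes the estimate, and organising these ingredients is the essential point; the remaining bookkeeping follows the template of \cite{KR2018}.
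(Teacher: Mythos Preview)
Your overall strategy aligns with the paper's: reduce to slices, recognise the parameter set as semialgebraic via Tarski--Seidenberg, parametrise with Gromov's lemma, and exploit that the Jacobian of $u \mapsto \mathbf{a}_\phi(u)+t\,\mathbf{d}_\phi(u)$ is polynomial in $t$, so that an elementary Remez-type inequality (the paper's Lemma~\ref{elem}) governs the growth from $I_\ell$ to $J$. But your sketch has a genuine gap in the passage between image measures and Jacobian integrals.

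Concretely: writing $J_\phi(t,u)$ for your ``$(n-1)$-dimensional Jacobian factor'', the area formula gives $\int J_\phi(t,u)\,du \ge |L_t(\phi(\cdot))|$, which is the direction you need on the $J$ side. On the $I_\ell$ side, however, you need the \emph{reverse} comparison $\int J_\phi(t,u)\,du \lesssim |L_t(\mathcal{A})|$, and this requires a bound on the multiplicity of $L_t\circ\phi$. For a general $C^1$ chart from Gromov's lemma that multiplicity is uncontrolled, and when $e=\dim\mathcal{A}>n-1$ (which nothing in your setup excludes) the co-area factor does not control the image measure in either direction. The paper closes this in two steps you omit: it first takes a semialgebraic \emph{section} (Corollary~\ref{Tarski corollary}) of the line set leaving only one direction $\mathbf{d}$ per position $\mathbf{a}'$, which forces the parameter space to be $(n{-}1)$-dimensional; it then replaces the $C^s$ Gromov charts by degree-$s$ \emph{polynomial} maps $F^i_Q,G^i_Q$ via Taylor expansion on cubes of diameter $\sim\rho^{\varepsilon/n}$, after which B\'ezout bounds the multiplicity of $F^i_Q+(t-t_\ell)G^i_Q$ by $s^{n-1}$. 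The $\rho^{-\varepsilon}$ loss comes from the number of these Taylor cubes, not from a pigeonholing on slice sizes as you suggest, and the passage to $2\rho$ is precisely what absorbs the Taylor error. These steps are not bookkeeping inherited wholesale from \cite{KR2018}: the choice to section so as to leave one $\mathbf{d}$ per $\mathbf{a}'$ (the opposite of the choice in \cite{KR2018}) is specific to this lemma and is what makes the Jacobian you analyse the right one.
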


To prove Lemma \ref{32}, we will apply the following elementary lemma which states that, although it is not possible to bound  a polynomial at a point in terms of the value that it takes at another point (which could be a root), such a bound holds on average. 

\begin{lemma}\label{elem} Let  $P:\mathbb{R}\to\mathbb{R}$ be a polynomial of degree $m$, $I\subset \mathbb{R}$ be an interval and $t\in \mathbb{R}$. Then $$
|P(t )|\le \Big(8m\frac{\max\{|I|, \dist(t,I)\}}{{|I|}}\Big)^m\frac{1}{|I|}\int_I |P(t')|\, d t'. 
$$
\end{lemma}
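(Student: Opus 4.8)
The plan is to prove the inequality first in the normalised case where $I = [-1,1]$, and then deduce the general case by an affine change of variables. So let $P$ be a polynomial of degree $m$, write $I = [-1,1]$, and let $t \in \mathbb{R}$. Since the claim scales linearly in $P$, we may normalise so that $\frac{1}{2}\int_{-1}^1 |P(s)|\,ds = 1$, and the goal becomes to show $|P(t)| \le (8m)^m (\max\{1, \dist(t,[-1,1])\})^m = (8m \max\{1, |t|-1\}_+)^m$ or something comparable; I will arrange constants to match the statement. The key point is a two-step reduction: (i) control $\sup_{[-1,1]}|P|$ by the $L^1$-average over $[-1,1]$, and (ii) control $|P(t)|$ for $t$ outside $[-1,1]$ by $\sup_{[-1,1]}|P|$ using a Chebyshev/Markov-type growth estimate for polynomials of degree $m$.

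For step (i), the cleanest route is Chebyshev-type comparability of norms on an interval for the finite-dimensional space of polynomials of degree $\le m$: there is an absolute constant so that $\|P\|_{L^\infty([-1,1])} \le C m^2 \|P\|_{L^1([-1,1])}$ (this follows, e.g., from Nikolskii's inequality, or elementarily by writing $P$ in the Legendre or monomial basis and estimating coefficients by the $L^1$ norm, or by combining $\|P\|_{L^\infty}\le \|P\|_{L^2}\cdot(\text{Christoffel function bound})$ with $\|P\|_{L^2}\lesssim m^{1/2}\|P\|_{L^1}^{1/2}\|P\|_{L^\infty}^{1/2}$ and absorbing). Any such bound with a polynomial-in-$m$ constant is fine, since it will be dominated by the $(8m)^m$ factor; in fact for $\dist(t,I)\le|I|$ (so $\max\{|I|,\dist(t,I)\}=|I|$) we need exactly a bound of the form $\|P\|_{L^\infty([-1,1])}\le (8m)^m\|P\|_{L^1}/|I|$, which is very lossy and hence easy. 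For step (ii), the standard tool is that a degree-$m$ polynomial bounded by $M$ on $[-1,1]$ satisfies $|P(t)| \le M\,|T_m(t)|$ for $|t|\ge 1$, where $T_m$ is the Chebyshev polynomial, and $|T_m(t)| \le (2|t|)^m$ for $|t|\ge 1$; more to the point, if $1+\sigma = |t|$ with $\sigma = \dist(t,[-1,1]) \ge 0$, then $|T_m(t)| \le (1+\sigma+\sqrt{(1+\sigma)^2-1})^m \le (1 + 2\sqrt{2\sigma} + 2\sigma)^m \lesssim (1+\sqrt{\sigma})^{2m}$ for $\sigma\le 1$ and $\lesssim \sigma^m$ for $\sigma\ge 1$. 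Combining, $|P(t)| \le \|P\|_{L^\infty([-1,1])}\cdot (C\max\{1,\sigma\})^m$; together with step (i) and the relation $\max\{1,\sigma\} \sim \max\{|I|,\dist(t,I)\}/|I|$ after unscaling, this gives the claim provided the constants are chosen so the product of the $C^m$ factors is at most $(8m)^m$ — which holds comfortably since step (i) already contributes a factor growing faster than any fixed $C^m$.

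Finally I would undo the normalisation: given a general interval $I$ with center $c$ and half-length $h = |I|/2$, apply the $[-1,1]$ case to $\tilde P(s) := P(c + hs)$, whose degree is still $m$; here $\tilde t := (t-c)/h$ satisfies $\dist(\tilde t, [-1,1]) = \dist(t,I)/h$ and $\frac{1}{2}\int_{-1}^1 |\tilde P(s)|\,ds = \frac{1}{|I|}\int_I |P(t')|\,dt'$, so the inequality transfers verbatim with $\max\{1,\dist(\tilde t,[-1,1])\} = \max\{|I|,2\dist(t,I)\}/|I|$, and adjusting the constant from $\max\{|I|,2\dist(t,I)\}$ to $\max\{|I|,\dist(t,I)\}$ costs only a harmless factor of $2^m \le (8m)^m$.

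The step I expect to be the main (though still elementary) obstacle is pinning down step (i) — the $L^\infty$-vs-$L^1$ comparison on an interval for degree-$\le m$ polynomials — with a clean, self-contained proof; everything downstream is a matter of bookkeeping the Chebyshev growth estimate and the affine rescaling, and the constant $8m$ in the statement is generous enough that no sharpness is needed anywhere.
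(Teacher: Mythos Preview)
Your approach is correct in outline and genuinely different from the paper's. You invoke two classical polynomial inequalities---a Nikolskii-type $L^\infty\lesssim m^2 L^1$ comparison on an interval, and the Chebyshev extrapolation bound $|P(t)|\le\|P\|_{L^\infty([-1,1])}\,|T_m(t)|$ for $|t|>1$---and then rescale. The paper instead factors $P$ over $\mathbb{C}$: after reducing to $I=[-1,1]$ it separates the roots into those with $|z_j|<2$ and those with $|z_j|\ge 2$, bounds each linear factor $|t-z_j|$ from above directly, and bounds the integral $\int_{-1}^1\prod_j|t'-z_j|\,dt'$ from below by observing that most $t'\in[-1,1]$ lie at distance $\ge 1/(2m)$ from any fixed set of $m$ roots. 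This is entirely elementary, self-contained, and delivers the explicit constant $8m$ with no appeal to named inequalities. Your route is more conceptual and ties the lemma to standard extremal properties of polynomials; it yields a bound of the form $(Cm)^m$ for some absolute $C$, which is all the paper's application actually requires, though verifying $C=8$ exactly would need tighter control of the Nikolskii constant than you sketch. One small point worth tightening: for $t\notin I$ the Nikolskii factor and the Chebyshev growth factor multiply, so your assertion that ``any polynomial-in-$m$ constant'' from step (i) is automatically absorbed into $(8m)^m$ needs the easy but unstated check that $m^{O(1)}\cdot C'^m\le(8m)^m$ for the Chebyshev constant $C'$ arising in step (ii).
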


The simple proof of this result is postponed until the end of the subsection.

At this point it is also worth recalling that the $\rho$-neighbourhoods~$N_{\!\rho} \bZ_j$ of algebraic  varieties~$\bZ_j=Z(P_1, \dots, P_{n-j})$ are semialgebraic sets. To see this we consider the auxiliary set
$$
Y_j=\Big\{\, (\bx,\by)\in\mathbb{R}^{2n}\ :\  P_1(\bx), \ldots , P_{n-j}(\bx)=0,\ |\by-\bx|< \rho \,\Big\}$$ 
which is clearly semiaglebraic. Then the Tarski--Seidenberg theorem (see Theorem~\ref{Tarski}) tells us that the orthogonal projection $\Pi(Y_j)=N_{\!\rho} \bZ_j$, where $\Pi: (\bx,\by)\mapsto \by$, is also semialgebraic with compexity bounded in terms of $n$ and $d$.

\begin{proof}[Proof (of Lemma \ref{32})]
Consider slices of $S_{\ell}(J,\rho)$ of the form
$$
S_{\ell}(J,\rho)_{t}:= S_{\ell}(J,\rho) \cap \big(\R^{n-1} \times \{t\}\big), \qquad t \in \R,
$$
so that, by Fubini's theorem,
\begin{equation*}
    |S_{\ell}(J,\rho)| \leq 2|J|\rho^{n-1} + \int_{\{ t \in J : |S_{\ell}(J,\rho)_t| \geq 2\rho^{n-1}\}} |S_{\ell}(J,\rho)_t|\,d t.
\end{equation*}
By the hypothesis of the lemma, the first term on the right-hand side is at most half the left-hand term. Therefore, is suffices to prove that  \begin{equation}\label{second}
|S_{\ell}(J,\rho)_{t_{\ell}}|\lesssim_d \rho^{-\varepsilon}\Big(\frac{|J|}{|I_{\ell}|}\Big)^{n-1}\frac{|S_{\ell}(I_{\ell},2\rho)|}{|I_{\ell}|}
\end{equation}
whenever $t_{\ell}\in J$ and 
$|S_{\ell}(J,\rho)_{t_{\ell}}|\ge 2\rho^{n-1}.$ 

In order to prove \eqref{second}, we write $\mathbf{a}'=\mathbf{a} +t_\ell \mathbf{d}$ and $l_{\mathbf{a}',\mathbf{d}}'(t) := (\mathbf{a}' + (t-t_{\ell})\mathbf{d}, t)$ so that $l_{\mathbf{a}',\mathbf{d}}'(t) = l_{\mathbf{a},\mathbf{d}}(t)$ and $S_{\ell}(J,\rho)$ can be rewritten as
$$
\bigcap_{j=k}^\ell\big\{\, l_{\mathbf{a}',\mathbf{d}}'(t) \,:\, t \in J,\, (\mathbf{a}'-t_{\ell}\mathbf{d},\mathbf{d})\in Q^{2(n-1)}(\rho),\, l_{\mathbf{a}',\mathbf{d}}'(I_j) \subseteq N_{\!\rho} \bZ_j\cap B_{\lambda_j+\rho}\,\big\}.
$$
Consider the associated sets of lines 
$L_\ell(\rho,t_\ell)\equiv L_\ell(\rho,t_\ell,I_k,\ldots,I_m)$ defined by
$$
L_\ell(\rho,t_\ell):=\bigcap_{j=k}^\ell\big\{ (\mathbf{a}',\mathbf{d})\,:\, (\mathbf{a}'-t_{\ell}\mathbf{d},\mathbf{d})\in  Q^{2(n-1)}(\rho), \,   l_{\mathbf{a'},\mathbf{d}}'(I_j) \subseteq  N_{\!\rho} \bZ_j\cap B_{\lambda_j  +\rho}\big\}.
$$
From the definitions, 
\begin{equation}\label{unpack definitions}
    (\mathbf{a}',\mathbf{d}) \in L_\ell(\rho,t_\ell) \quad \textrm{if and only if} \quad l_{\mathbf{a}',\mathbf{d}}'(J) \subseteq S_{\ell}(J,\rho)
\end{equation}
and, in particular, if either of these equivalent statements holds, then $\mathbf{a}' \in S_{\ell}(J,\rho)_{t_{\ell}}$.

\begin{figure}
    \centering
 
 \begin{tikzpicture}[scale=0.8]


\draw[dotted] (0, -1) -- (0, 8);

\draw[blue, thick] (0, 1) -- (0, 3);
\draw[blue, thick] (0, 4) -- (0, 7);
\draw[black!70, name path=lower_A] (-2, 0) -- (0,1) -- (1.5, 0.5) -- (2, 1);
\draw[black!70, name path=upper_A] (-2, 7.5) -- (2,6.5);
\filldraw[white] (-1, 3.5) -- (0,3) -- (1, 3.5) -- (0,4) -- (-1, 3.5);
\draw[black!70] (-1, 3.5) -- (0,3) -- (1, 3.5) -- (0,4) -- (-1, 3.5);
\draw[dotted] (0, 3) -- (0, 4);

\tikzfillbetween[
    of=upper_A and  lower_A, on layer=bg] {pattern=dots, pattern color=blue!30};

\foreach \d in {-0.5,-0.4,...,-0.1}
{
\draw [black, domain=-2:2] plot (\x, {1.2 -\d*\x});
\draw [black!50,dashed, domain=2.1:3] plot (\x, {1.2 -\d*\x});
\draw [black!50,dashed, domain=-3:-2.1] plot (\x, {1.2 -\d*\x});
}

\foreach \d in {-0.7,-0.6,...,0.3}
{
\draw [black, domain=-2:2] plot (\x, {1.8 -\d*\x});
\draw [black!50,dashed, domain=2.1:3] plot (\x, {1.8 -\d*\x});
\draw [black!50,dashed, domain=-3:-2.1] plot (\x, {1.8 -\d*\x});
}
\foreach \d in {-0.6,-0.5,...,0.6}
{
\draw [black, domain=-2:2] plot (\x, {5 -\d*\x});
\draw [black!50,dashed, domain=2.1:3] plot (\x, {5 -\d*\x});
\draw [black!50,dashed, domain=-3:-2.1] plot (\x, {5 -\d*\x});
}

\foreach \d in {-0.2,-0.1,...,0.7}
{
\draw [black, domain=-2:2] plot (\x, {6 -\d*\x});
\draw [black!50,dashed, domain=2.1:3] plot (\x, {6 -\d*\x});
\draw [black!50,dashed, domain=-3:-2.1] plot (\x, {6 -\d*\x});
}

\node[circle,fill=blue,inner sep=0pt,minimum size=4pt] at (0,1.2) {};
\node[circle,fill=blue,inner sep=0pt,minimum size=4pt] at (0,1.8) {};
\node[circle,fill=blue,inner sep=0pt,minimum size=4pt] at (0,5) {};
\node[circle,fill=blue,inner sep=0pt,minimum size=4pt] at (0,6) {};

\draw[->, thick] (-3,-1) -- (3, -1);
\draw[-, thick] (0,-0.8) -- (0, -1.2) node [below] {$t_{\ell} \in J$};
\draw[-, thick] (-2,-0.8) -- (-2, -1.2);
\draw[-, thick] (2,-0.8) -- (2, -1.2);
\draw[dotted] (-2, -1) -- (-2, 8);
\draw[dotted] (2, -1) -- (2, 8);


\draw[dotted] (0+8.5, 0) -- (0+8.5, 8);
\draw[blue, thick] (0+8.5, 1) -- (0+8.5, 3);
\draw[blue, thick] (0+8.5, 4) -- (0+8.5, 7);
\draw[black!70, name path=lower_B] (-2+8.5, 0) -- (0+8.5,1) -- (1.5+8.5, 0.5) -- (2+8.5, 1);
\draw[black!70, name path=upper_B] (-2+8.5, 7.5) -- (2+8.5,6.5);
\filldraw[white] (-1+8.5, 3.5) -- (0+8.5,3) -- (1+8.5, 3.5) -- (0+8.5,4) -- (-1+8.5, 3.5);
\draw[black!70] (-1+8.5, 3.5) -- (0+8.5,3) -- (1+8.5, 3.5) -- (0+8.5,4) -- (-1+8.5, 3.5);
\draw[dotted] (0+8.5, 3) -- (0+8.5, 4);

\tikzfillbetween[
    of=upper_B and  lower_B, on layer=bg] {pattern=dots, pattern color=blue!30};

\foreach \d in {-0.4}
{
\draw [black, thick, domain=-2:2] plot (\x+8.5, {1.2 -\d*\x});
\draw [black!50,dashed, domain=2:3] plot (\x+8.5, {1.2 -\d*\x});
\draw [black!50,dashed, domain=-3:-2] plot (\x+8.5, {1.2 -\d*\x});
}

\foreach \d in {-0.3}
{
\draw [black, thick, domain=-2:2] plot (\x+8.5, {1.8 -\d*\x});
\draw [black!50,dashed, domain=2:3] plot (\x+8.5, {1.8 -\d*\x});
\draw [black!50,dashed, domain=-3:-2] plot (\x+8.5, {1.8 -\d*\x});
}
\foreach \d in {0.5}
{
\draw [black, thick, domain=-2:2] plot (\x+8.5, {5 -\d*\x});
\draw [black,dashed, domain=2:3] plot (\x+8.5, {5 -\d*\x}) node [right] {$l_{\mathbf{x}}'$};
\draw [black,dashed, domain=-3:-2] plot (\x+8.5, {5 -\d*\x});
}

\foreach \d in {0.3}
{
\draw [black, thick, domain=-2:2] plot (\x+8.5, {6 -\d*\x});
\draw [black!50,dashed, domain=2:3] plot (\x+8.5, {6 -\d*\x});
\draw [black!50,dashed, domain=-3:-2] plot (\x+8.5, {6 -\d*\x});
}

\node[circle,fill=blue,inner sep=0pt,minimum size=4pt] at (0+8.5,1.2) {};
\node[circle,fill=blue,inner sep=0pt,minimum size=4pt] at (0+8.5,1.8) {};
\node[circle,fill=blue,inner sep=0pt,minimum size=4pt] at (0+8.5,5) {};
\node[circle,fill=blue,inner sep=0pt,minimum size=4pt] at (0+8.5,6) {};

\draw[->, thick] (-3+8.5,-1) -- (3+8.5, -1);
\draw[-, thick] (0+8.5,-0.8) -- (0+8.5, -1.2) node [below] {$t_{\ell} \in J$};
\draw[-, thick] (-2+8.5,-0.8) -- (-2+8.5, -1.2);
\draw[-, thick] (2+8.5,-0.8) -- (2+8.5, -1.2);
\draw[dotted] (-2+8.5, -1) -- (-2+8.5, 8);
\draw[dotted] (2+8.5, -1) -- (2+8.5, 8);

\node (F def) at  (5cm, 4.6cm) {$(F(\mathbf{x}) , t_{\ell})
$};
\draw[-latex] (F def) to[out=335,in=220] (8.4,4.8);

\end{tikzpicture}

    \caption{Forming a semialgebraic section of the lines. Roughly speaking, the slice $S_{\ell}(J, \rho)_{t_{\ell}}$ (shown as a {\color{blue} \textbf{blue}} vertical line above) is parametrised by a polynomial mapping $F \colon \R^{n-1} \to \R^{n-1}$. We can find another polynomial mapping $G \colon \R^{n-1} \to \R^{n-1}$ which ``selects'' a single line through each point $(F(\mathbf{x}),t_{\ell}) \in S_{\ell}(J, \rho)_{t_{\ell}}$. Indeed, the line $l_{\mathbf{x}}' := \{(F(\mathbf{x}) + (t-t_{\ell})G(\mathbf{x}), t) : t \in \R\}$ has this property. } 
\end{figure}
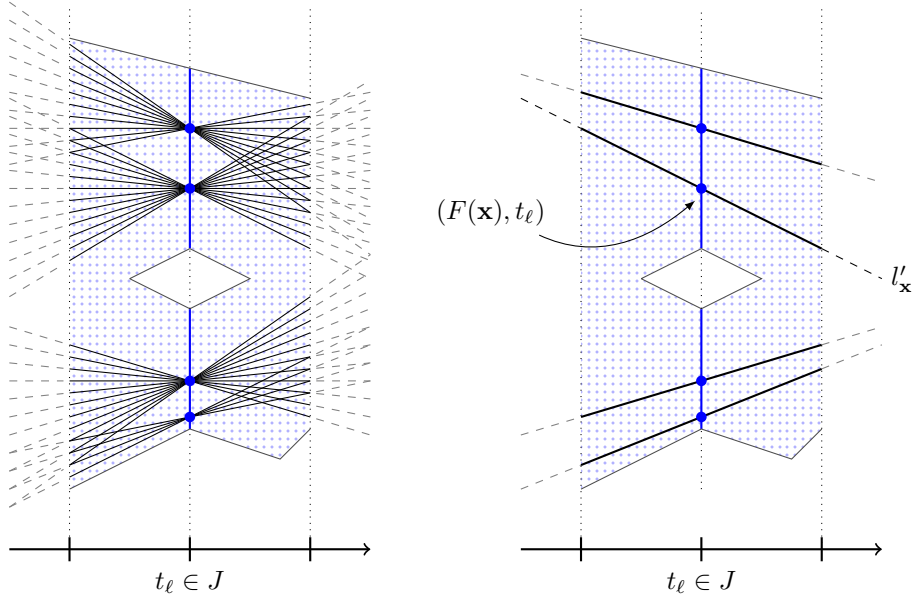

Recall from our earlier discussion that the sets $N_{\!\rho}\bZ_j \cap B_{\lambda_j+ \rho}$ are semialgebraic. By quantifier elimination (that is, the Tarski--Seidenberg theorem),  the sets $L_{\ell}(\rho, t_{\ell})$ are also semialgebraic (see  \cite[Lemma 1.1]{KR2018} for an argument of this type). By an application of Lemma~2.2 of~\cite{KR2018} (see also Corollary~\ref{Tarski corollary} of the appendix), we can take a semialgebraic section of $L_\ell(\rho,t_\ell)$ with complexity bounded by $C(n,d)$, so that there is only one direction~$\mathbf{d}$ for each possible  position~$\mathbf{a}'$ (this is in  contrast with~\cite{KR2018}, where the section was taken to leave only one position for each direction). Calling this section $L_{\ell}'(\rho,t_{\ell})$, we may use Gromov's algebraic lemma (see Lemma \ref{Gromov}), as in \cite[Section 3]{KR2018},  to parametrise $L_{\ell}'(\rho,t_{\ell})$. In particular, taking $s$ to be the first integer larger than $2n^2/\varepsilon$, there exists some $N \in \N$, depending only on the dimension $n$, degree $d$ and $\varepsilon$, and a collection of $C^s$ functions $F^i, G^i:[0,1]^{n-1}\to\mathbb{R}^{n-1}$ for $1 \leq i \leq N$ such that:
\begin{enumerate}[i)]
    \item $\displaystyle \bigcup_{i=1}^N (F^i,G^i)([0,1]^{n-1})=L_{\ell}'(\rho,t_{\ell})$,
    \item $\displaystyle\sup_{|\alpha|\le s}\|\partial^\alpha F^i\|_{\infty},\ \sup_{|\alpha|\le s}\|\partial^\alpha G^i\|_{\infty}\le 1,\quad i=1,\ldots,N$.
\end{enumerate}

Again following \cite[Section 3]{KR2018}, we partition $[0,1]^{n-1}$ into cubes~$Q$ of small diameter $c\rho^{\varepsilon/n}$, with $c$ to be chosen below. On each cube $Q$, we approximate the $C^s$ functions $F^i,G^i:[0,1]^{n-1}\to \mathbb{R}^{n-1}$ by polynomials $F^i_{\!Q},G^i_{\!Q}:\mathbb{R}^{n-1}\to \mathbb{R}^{n-1}$ of degree~$s$ using Taylor's theorem. Indeed, letting $\by_{\!Q}$ denote the centre of $Q$, Taylor's theorem yields polynomials that satisfy
 \begin{equation}\label{Taylor approximation}
 |F^i(\by)-F^i_{\!Q}(\by)|,\ |G^i(\by)-G^i_{\!Q}(\by)|\le  \frac{1}{s!}|\by-\by_{\!Q}|^s\le c^s\!\rho^{2n},\quad \by\in Q.
 \end{equation}
  Using \eqref{unpack definitions} and unpacking all the definitions, 
 \begin{equation*}
     S_{\ell}(J,\rho)_{t_{\ell}} \subseteq \bigcup_{i=1}^N \bigcup_Q F^i(Q).
 \end{equation*}
Furthermore, by \eqref{Taylor approximation}, the boundary of $F^i(Q)$ belongs to the $c^s\!\rho^{2n}$-neighbourhood of the boundary of $F_Q^i(Q)$ and, in particular,
\begin{equation*}
    F^i(Q) \subseteq N_{c^s\!\rho^{2n}} F^i_Q(\partial Q) \cup F^i_Q(Q).
\end{equation*}
The set $F^i_Q(\partial Q)$ is contained in a union of $2^n$ algebraic hypersurfaces so that, by Wongkew's theorem~\cite{Wongkew1993} (see Theorem~\ref{Wongkew theorem}),
\begin{equation*}
 |F^i(Q)| \leq C(n,s)c^s\!\rho^{2n} + |F_Q^i(Q)|.  
\end{equation*}
By taking $c$ sufficiently small, depending only on $n$, $d$ and $\varepsilon$, 
 \begin{equation}\label{Taylor approximation 2}
     |S_{\ell}(J,\rho)_{t_{\ell}}| \leq \rho^{n-1} + \sum_{i=1}^N \sum_{Q}|F_{Q}^i(Q)|
 \end{equation}
 and, by the nondegeneracy hypothesis $|S_{\ell}(J,\rho)_{t_{\ell}}|\ge 2 \rho^{n-1}$, we have
 \begin{equation}\label{sum}
 |S_{\ell}(J,\rho)_{t_{\ell}}|\le 2  \sum_{i=1}^N \sum_{Q}|\mathcal{S}_{Q}^i(J)_{t_{\ell}}|
 \end{equation}
 where 
 \begin{equation*}
\mathcal{S}_{Q}^i(J):=\Big\{\, (F^i_{\!Q}(\by) +(t-t_{\ell}) G^i_{\!Q}(\by), t)\in \mathbb{R}^{n-1}\times J\,:\,  \by\in Q\,\Big\}.
\end{equation*}

On the other hand, we also have that $\mathcal{S}_{Q}^i(I_{\ell})\subseteq S_{\ell}(I_{\ell},2\rho)$. Indeed, fixing $\by \in Q$, it follows from the definition of the $F^i$ and $G^i$, \eqref{unpack definitions} and \eqref{Taylor approximation} that 
\begin{equation*}
    (F^i_{\!Q}(\by) +(t-t_{\ell}) G^i_{\!Q}(\by), t) \in N_{2\rho}\bZ_j \cap B_{\lambda_j+2\rho} \qquad \textrm{for all $t \in I_j$ and $k \leq j \leq \ell$.}
\end{equation*} 
In particular, if $t \in I_{\ell}$ then $(F^i_{\!Q}(\by) +(t-t_{\ell}) G^i_{\!Q}(\by), t) \in S_{\ell}(I_{\ell},2\rho)$. Given that there are fewer than $C(n,d,\varepsilon)\rho^{-\varepsilon}$ summands in \eqref{sum}, it therefore suffices to show
\begin{equation}\label{isit special}
    |\mathcal{S}_{Q}^i(J)_{t_{\ell}}| \lesssim_d \Big(\frac{|J|}{|I_{\ell}|}\Big)^{n-1} \frac{|S^i_Q(I_{\ell})|}{|I_{\ell}|}
\end{equation}
for any fixed choice of $i$ and $Q$. Suppose $F,G \colon \mathbb{R}^{n-1}\to\mathbb{R}^{n-1}$ are polynomials of degree at most $s$ such that  $\det DF$ is not the zero polynomial, where $DF$ denotes the $(n-1)\times(n-1)$ Jacobian matrix of $F$. It thus suffices to prove, more generally, that
 \begin{equation}\label{isit}
|\mathcal{S}(J)_{t_{\ell}}|\le (8(n-1)s)^{n-1}\Big(\frac{\max\{|I|, \maxdist(I,J)\}}{|I|}\Big)^{n-1}\frac{|\mathcal{S}(I)|}{|I|}
\end{equation}
where $I, J \subseteq \R$ are arbitrary intervals, $Q\subset [0,1]^{n-1}$ is any measureable set and  
\begin{equation*}
\mathcal{S}(I):=\Big\{\, (F(\by) +(t-t_{\ell}) G(\by), t)\in \mathbb{R}^{n-1}\times I\,:\,  \by\in Q\,\Big\}.
\end{equation*}
Indeed, it follows from \eqref{Taylor approximation 2} that the polynomials $\det DF_Q^i$ are not zero and for the choice of intervals $I_{\ell}$ and $J$ above we have $\max\{|I_{\ell}|, \mathrm{maxdist}(I_{\ell},J)\} \leq 3|J|$. Hence \eqref{isit special} follows as a special case of \eqref{isit}.

Now, by B\'ezout's theorem, $F+(t-t_{\ell})G$ is at most $s^{n-1}$-to-one on 
$$
Q_t=\big\{\,\by\in Q\,:\,    \det(DF+(t-t_{\ell})DG)(\by)\neq 0\,\big\}.
$$
Furthermore, since, by hypothesis, the polynomial $(\by, t) \mapsto \det(DF + tDG)(\by)$ is non-zero, it follows by Fubini's theorem that $Q\setminus Q_t$ is a Lebesgue null set for almost every $t \in \R$. Consequently,
$$
\frac{1}{s^{n-1}}\int_{I}\int_{Q} | \det(DF+(t-t_{\ell})DG)(\by)|\,d\by dt\le \int_{I}|(F+(t-t_{\ell})G)(Q_t)|\,dt \leq |\mathcal{S}(I)|.
$$
On the other hand, by an application of Lemma~\ref{elem}, we have that
\begin{align*}
&\,|\mathcal{S}(J)_{t_{\ell}}| =|F(Q)|\le \int_{Q} | \det DF(\by)|\,d\by \\
\le&\, \big(8(n-1)\big)^{n-1}\frac{\max\{|I|, \maxdist(I,J)\}^{n-1}}{|I|^{n}}\!\!\int_{Q} \int_{I} |\det (DF+(t-t_{\ell})DG)(\by)|\,d t d\by.
\end{align*}
Combining these displayed inequalities, via an application of Fubini's theorem, yields \eqref{isit} which completes the proof.
\end{proof}

\subsubsection*{Closing the induction} By the initial reductions, to close the inductive step (and thereby finish the proof of Lemma~\ref{mainlem}), it suffices to show \eqref{wanted}. There are two cases to consider:
\begin{itemize}
    \item If $J$ is a subinterval of \eqref{close intervals}, then $|J| = |I_{\ell}|$ and $\mathrm{maxdist}(I_{\ell}, J) \leq 2|I_{\ell}|$. In this case, \eqref{wanted} immediately follows from Lemma~\ref{32}.
    \item If $J$ is a subinterval of one of the sets in \eqref{far intervals}, then $\dist(I_{\ell}, J) = |J|$ and $\maxdist(I_{\ell}, J) \leq 3|J|$.  In this case, \eqref{wanted} follows from a successive application of Lemma~\ref{step 1 lemma} and Lemma~\ref{32}.
\end{itemize}  
This concludes the proof of Lemma~\ref{mainlem}.
\end{proof}

\subsubsection*{The elementary polynomial bound} It remains to prove the elementary Lemma \ref{elem}, which was used in the proof of Lemma \ref{32}.

\begin{proof}[Proof (of Lemma \ref{elem})] By  translating so that $I=[-\lambda,\lambda]$ for some $\lambda>0$, factorising the resulting polynomial, scaling $t\to t/\lambda$ and using the fact that the resulting inequality is symmetric over the origin, this reduces to  proving
$$
|(t -z_1)\cdots(t -z_{m})|\le \big(8m\max\{|t-1|,2\}\big)^m\int_{-1}^{1} |(t'-z_1)\cdots(t'-z_{m})| \,dt'$$
whenever $z_1,\ldots,z_{k}\in\mathbb{C}$.  Supposing that $|z_1|,\ldots,|z_k|\ge 2$ and $|z_{k+1}|,\ldots,|z_m|<2$, as we may, we first note that
\begin{align}\nonumber
\int_{-1}^{1} |(t'-z_1)\cdots(t'-z_{m})| \,dt' &\ge \Big(\frac{1}{2}\Big)^k|z_1|\cdots|z_k|\int_{-1}^{1} |(t'-z_{k+1})\cdots(t'-z_{m})| \,dt'\\
&\ge \Big(\frac{1}{2}\Big)^{k}|z_1|\cdots|z_k|\Big(\frac{{1}}{2(m-k)}\Big)^{m-k},\label{lol}
\end{align}
where the second inequality follows because most values of $t'\in[-1,1]$ must be reasonably far from the roots. Now the small roots, when $j=k+1,\ldots,m$, satisfy
$$
|t -z_{j}|\le |t-1|+|1-z_{j}|\le 4\max\{|t-1|,2\},
$$
and the large roots, when $j=1,\ldots,k$, satisfy
$$
\frac{|z_j|}{|t -z_j|}\ge \frac{|z_j|}{|t-1| +|1-z_j|}\ge \min\Big\{\frac{|z_j|}{2|t-1|},\frac{|z_j|}{2|1-z_j|}\Big\}\ge \frac{{1}}{2\max\{|t-1|,2\}}.
$$
Together we find that
$$
|z_1|\cdots|z_k| \ge \Big(\frac{1}{4\max\{|t-1|,2\}}\Big)^{m}|(t -z_1)\cdots(t -z_{m})|
$$
which can be plugged into \eqref{lol} to complete the proof.
\end{proof}




\subsection{Proof of Theorem~\ref{mainThm}}\label{multiscale subsection} Theorem~\ref{mainThm} now follows by a minor adaptation of the argument from~\cite{KR2018}, applying Lemma~\ref{mainlem} in one key step.

\begin{proof}[Proof (of Theorem~\ref{mainThm})]
Note first that when 
\begin{equation}\label{thisthing}
|T \cap B_{\lambda_j}\cap N_{\!\rho} \bZ_j|\ge \lambda_{j}|T|, 
\end{equation}
 there necessarily exists a line in the direction of $T$ for which  the one-dimensional Lebesgue measure of the line intersected with $B_{\lambda_j}\cap N_{\!\rho} \bZ_j$ is greater than or equal to $\lambda_j$. 
 By B\'ezout's theorem, this line can cross $\bZ_j$ at most $d$ times, so that if $T \cap B_{\lambda_j}\cap N_{\!\rho} \bZ_j$  satisfies \eqref{thisthing}, it must contain a line segment in the direction of $T$ of length $\lambda_{j}/(d+1)$. Fattening this line segment, we obtain a truncated $\delta$-tube contained in $B_{\lambda_j}\cap N_{2\rho} \bZ_j$ that projects onto an interval in the $t$-axis of length~$\ge\lambda_{j}/(nd)$. This interval must contain one of the intervals $I_j$ of length~$\lambda_{j}/(2nd)$ with which we partitioned the orthogonal projection of $B_{\lambda_j}$. Recalling that 
 $$
 L_m(2\rho,0,I_k,\ldots,I_m) := \bigcap_{j=k}^m\Big\{\, (\mathbf{a},\mathbf{d})\in [-1,1]^{2(n-1)}\,:\,   l_{\mathbf{a},\mathbf{d}}(I_j) \subseteq N_{2\rho} \bZ_j\cap B_{\lambda_j}\,\Big\},
$$
 we find that
 \begin{equation*}
\delta^{n-1}\#\bigcap_{j=k}^m\left\{ T \in \mathbb{T}  : |T \cap B_{\lambda_j} \cap N_{\!\rho} \bZ_j|\ge \lambda_j|T|\right\}\,\lesssim\!\!\sum_{I_k,\ldots,I_m}\!\!\big|\Pi\big(L_m(2\rho,0,I_k,\ldots,I_m)\big)\big|,
\end{equation*}
where  $\Pi:(\mathbf{a},\mathbf{d})\mapsto \mathbf{d}$ denotes the orthogonal projection onto the directions.
This is because, for each of the $\delta$-tubes of the original discrete set,  there is a whole $\delta$-ball's worth of different directions contained in one of $\Pi\big(L_{m}(2\rho,0,I_k,\ldots,I_m)\big)$, and these balls finitely overlap due to the fact that $\mathbb{T}$ is direction-separated.

Now by the Tarski--Seidenberg projection theorem, we can take another semialgebraic section of $L_m(2\rho,0,I_k,\ldots,I_m)$, this time leaving only one position $\mathbf{a}$ for each $\mathbf{d}$ as in \cite[Lemma 1.2]{KR2018} (see Corollary~\ref{Tarski corollary}). Following the notation of \cite{KR2018}, we call this section~$L'(I_k,\ldots,I_m)$, and so we also have
\begin{equation}\label{it}
\delta^{n-1}\#\bigcap_{j=k}^m\left\{ T \in \mathbb{T}  : |T \cap B_{\lambda_j} \cap N_{\!\rho} \bZ_j|\ge \lambda_j|T|\right\}\,\lesssim\sum_{I_k,\ldots,I_m}\big|\Pi\big(L'(I_k,\ldots,I_m)\big)\big|.
\end{equation}
Noting that there are no more than $(4nd)^{m-k+1}$ summands in this sum, it remains to bound $|\Pi(L'(I_k,\ldots,I_m))|$ independently of the choice of $I_k,\ldots,I_m$.
For this we use Gromov's algebraic lemma as in the previous section to parametrise $L'(I_k,\ldots,I_m)$ with $C^s$ functions $F^i$ and $G^i$;
$$
\bigcup_{i=1}^N (F^i,G^i)([0,1]^{n-1})=L'(I_k,\ldots,I_m).
$$ 
Then we partition $[0,1]^{n-1}$ into cubes $Q$ again, this time of diameter $c\delta^{\varepsilon/n}$, and approximate the functions $F^i$ and $G^i$ by polynomials $F^i_{\!Q}$ and $G^i_{\!Q}$ of degree $s\le C(n,\varepsilon)$ using Taylor's theorem.  Assuming that $|\Pi(L'(I_k,\ldots,I_m)| \ge \delta^{n-1}$, as we may, these polynomial approximations do not alter the total measure significantly and we find that
\begin{align*}
\big|\Pi\big(L'(I_k,\ldots,I_m)\big)\big|& \le 2\sum_{i=1}^N\sum_{Q} |G^i_{\!Q}(Q)|\le 2\sum_{i=1}^N\sum_{Q} \int_{\!Q}|\det DG^i_{\!Q}(\by)|\, d\by.
\end{align*}
For any fixed $\by \in \R^{n-1}$, provided $\det DG_{\!Q}^i(\by) \neq 0$, the polynomial $ t \mapsto \det(DF^i_{\!Q}+tDG_{\!Q}^i)(\by)$ can be expressed as 
\begin{equation*}
    \det DG_{\!Q}^i (\by)\cdot \prod_{j=1}^{n-1}(t-z_j)
\end{equation*}
for some family of complex roots $z_1, \dots, z_{n-1} \in \C$. There exists a subset of $I_m$ of measure at least $\lambda_m/2$ upon which
\begin{equation*}
    |t - z_j| \geq \frac{\lambda_m}{4(n-1)} \quad \textrm{for $j=1, \dots, n-1$.}
\end{equation*}
On this set, it follows that $|\det(DF^i_{\!Q}+tDG_{\!Q}^i)(\by)| \lesssim \lambda_m^{n-1}|\det DG_{\!Q}^i (\by)|$ and, consequently,
\begin{align}\label{reto}
\big|\Pi\big(L'(I_k,\ldots,I_m)\big)\big|& \lesssim \sum_{i=1}^N\sum_{Q} \lambda_m^{-n}\int_{I_m} \int_{\!Q}|\det(DF^i_{\!Q}+tDG_{\!Q}^i)(\by)|\, d\by d t.
\end{align}

Now by an application of B\'ezout's theorem as in the previous section, the polynomials  $F^i_{\!Q}+tG_{\!Q}^i$ are at most $s^{n-1}$-to-one, so that each of the integrals on the right-hand side of \eqref{reto} can be bounded by $$s^{n-1}\int_{I_m}|(F^i_{\!Q}+tG_{\!Q}^i)(Q)|\,d t\le s^{n-1}|S_m(I_m,4\rho)|.$$ Given that there are fewer than $C(n,d,\varepsilon)\delta^{-\varepsilon}$ summands in \eqref{reto}, this yields
\begin{align*}
\big|\Pi\big(L'(I_k,\ldots,I_m)\big)\big| \lesssim_d \delta^{-\varepsilon}\lambda_m^{-n} |S_m(I_m,4\rho)|.
\end{align*}
Then the proof is completed by combining this with \eqref{it}, bounding $|S_m(I_m,4\rho)|$ by an application of  Lemma~\ref{mainlem}.
\end{proof}





\section{Reduction to $k$-broad estimates}\label{reduction section}

Rather than attempt to prove~\eqref{maximal inequality} directly, it is useful to work with a class of weaker inequalities known as \emph{$k$-broad estimates}. This type of inequality was introduced by Guth~\cite{Guth2016, Guth2018} in the context of oscillatory integral operators (and, in particular, the Fourier restriction conjecture) and was inspired by the earlier multilinear theory developed in~\cite{BCT2006} (see also \cite{Bennett2014} for a detailed discussion of multilinear Kakeya inequalities or Proposition \ref{k-broad vrs k-linear} below for a precise statement relating the $k$-broad and $k$-linear theory). 

In order to introduce the $k$-broad estimates, we decompose the unit sphere $S^{n-1}$ into finitely-overlapping caps $\tau$ of diameter~$\beta$, an admissible constant satisfying $\delta \ll \beta \ll 1$. We then perform a corresponding decomposition of $\T$ by writing the family as a disjoint union of subcollections
\begin{equation*}
    \T = \bigcup_{\tau} \T[\tau]
\end{equation*} 
where each $\T[\tau]$ satisfies $\mathrm{dir}(T) \in \tau$ for all $T \in \T[\tau]$. The ambient euclidean space is also decomposed into tiny balls $B_{\;\!\!\delta}$ of radius~$\delta$. In particular, fix $\mathcal{B}_{\delta}$ a collection of finitely-overlapping $\delta$-balls which cover $\R^n$. For $B_{\;\!\!\delta} \in \mathcal{B}_{\delta}$ define
\begin{equation*}
\mu_{\T}(B_{\;\!\!\delta}) := \min_{V_1,\dots,V_{\!A} \in \mathrm{Gr}(k-1, n)} \Bigg(\max_{\substack{\tau : \angle(\tau, V_a) > \beta \\ \textrm{for } 1 \leq a \leq A}} \Big\|\sum_{T \in \T[\tau]} \chi_T\Big\|_{L^p(B_{\;\!\!\delta})}^p \Bigg),
\end{equation*}
where $A \in \N$ and $\mathrm{Gr}(k-1, n)$ is the Grassmannian manifold of all $(k-1)$-dimensional subspaces in $\R^n$. Here $\angle(\tau, V_a)$ denotes the infimum of the (unsigned) angles $\angle(v,v')$  over all pairs of non-zero vectors $v \in \tau$ and $v' \in V_a $. For $U\subseteq \R^n$ the \emph{$k$-broad norm over $U$} is then defined to be
\begin{equation*}
\Big\|\sum_{T \in \T} \chi_T\Big\|_{\mathrm{BL}^p_{k,A}(U)} := \Bigg(\sum_{\substack{B_{\;\!\!\delta} \in \mathcal{B}_{\delta} }}\frac{|B_{\;\!\!\delta}\cap U|}{|B_{\;\!\!\delta}|} \mu_{\T}(B_{\;\!\!\delta}) \Bigg)^{1/p}.
\end{equation*}
The $k$-broad norms are \emph{not} norms in any familiar sense, but they do satisfy weak analogues of various properties of $L^p$-norms. The basic properties of these objects are described in Section~\ref{k-broad norms section} below.

The main ingredient in the proof of Theorem~\ref{linear theorem} is the following estimate for $k$-broad norms.

\begin{theorem}\label{main theorem} Let   $p\ge 1+\frac{2n}{(n-1)n + (k-1)k}$. For all $\varepsilon>0$,  there is an $A\sim 1$ such that
\begin{equation}\label{broad estimate}\tag{$\BL{k}^p$}
    \Big\|\sum_{T\in \T}\chi_{T}\Big\|_{\mathrm{BL}^p_{k,A}(\R^n)}\,\lesssim\, \delta^{-(n-1 - n/p)-\varepsilon}\Big(\sum_{T \in \T} |T| \Big)^{1/p}\end{equation}
whenever $0<\delta<1$ and $\mathbb{T}$ is a direction-separated family of $\delta$-tubes. 
\end{theorem}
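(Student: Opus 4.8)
The plan is to prove Theorem~\ref{main theorem} by adapting Guth's polynomial partitioning induction from \cite{Guth2016, Guth2018} to the Kakeya maximal setting, organised as a recursive algorithm so that the multiscale information from Theorem~\ref{mainThm} can be harnessed. First I would set up the induction on the scale~$\delta$ (equivalently, on the number of dyadic scales between $\delta$ and $1$), with the $k$-broad estimate $(\BL{k}^p)$ as the inductive statement. At each stage, given a direction-separated family~$\T$ of $\delta$-tubes, one applies the polynomial partitioning theorem (to be recalled in Section~\ref{polynomial partitioning section}) at an intermediate scale, producing a polynomial~$P$ of controlled degree whose zero set's neighbourhood, together with the cells of $\R^n\setminus Z(P)$, carves up space. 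This splits the contribution to the $k$-broad norm into a \emph{cellular} case and an \emph{algebraic} (wall) case.

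In the cellular case, each tube meets only boundedly many cells, so after pigeonholing one reduces to a rescaled copy of the same problem inside a single cell with fewer tubes, and the induction on $\delta$ (together with a favourable gain coming from the number of cells versus the degree) closes this branch. The algebraic case is the substantive one: here the tubes are concentrated in the neighbourhood of the variety~$Z(P)$. Rather than iterating the partitioning naively, I would run the recursive algorithm described in Section~\ref{structure lemma section}, repeatedly partitioning within lower-dimensional varieties and tracking, for each surviving tube, the nested sequence of scales $\lambda_k \leq \cdots \leq \lambda_m$ and nested varieties $\bZ_k \subseteq \cdots$ such that the tube spends a proportion $\lambda_j$ of its length inside $N_\rho \bZ_j \cap B_{\lambda_j}$. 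The algorithm terminates in a structural statement describing $\T$ as organised around such a tangential configuration, at which point Theorem~\ref{mainThm} provides the crucial cardinality bound $\#\{T\} \lesssim \big(\prod_{j=k}^{m-1} \rho/\lambda_j\big)(\rho/\lambda_m)^{n-m}\delta^{-(n-1)-\varepsilon}$ on the tubes in each such cluster. Combining this count with the $k$-broadness hypothesis — which forces the caps~$\tau$ contributing at a given $\delta$-ball to be quantitatively transverse to any $(k-1)$-plane, and hence, via the tangential structure, prevents too many tubes from the cluster from piling up — yields the desired $L^p$ bound precisely when $p \geq 1 + \frac{2n}{(n-1)n + (k-1)k}$. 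Optimising the exponents at each node of the recursion against the volume growth rate $(\rho/\lambda_j)$ is what produces the quadratic expression $(n-1)n + (k-1)k$ in the denominator.

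The main obstacle, as I see it, is bookkeeping the recursive algorithm so that the accumulated losses stay under control: each invocation of polynomial partitioning loses a factor involving the degree and a $\delta^{-\varepsilon}$-type factor, and one must ensure the recursion depth is $O_\varepsilon(1)$ (or that the $\varepsilon$'s can be chosen to telescope) so these do not compound into a power of $\delta$. A second delicate point is correctly interfacing the \emph{geometric} multiscale hypotheses produced by the algorithm — tubes tangent to nested varieties in nested balls — with the precise algebraic hypotheses of Theorem~\ref{mainThm}, including verifying that the varieties arising are transverse complete intersections of bounded degree and that the nesting of balls $B_{\lambda_k} \subseteq \cdots \subseteq B_{\lambda_m}$ is genuinely respected at each scale. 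I would expect the cellular/algebraic dichotomy and the rescaling to be essentially routine adaptations of the restriction argument, with essentially all the new content concentrated in (i) the recursive reorganisation and (ii) the application of Theorem~\ref{mainThm}, whose multiscale strength over Theorem~\ref{PWA} is exactly what upgrades the intermediate-dimensional bounds to the range claimed in Theorem~\ref{linear theorem}.
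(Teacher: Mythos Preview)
Your proposal is essentially correct and follows the paper's approach. One point should be sharpened, however, because it is exactly the methodological novelty the paper stresses: the argument is \emph{not} organised as an induction on $\delta$ in which the cellular case closes via an inductive hypothesis. Rather, both the cellular and algebraic alternatives are handled inside a single iterative procedure (\texttt{[alg 1]}) that simply records which case occurred at each step and continues until a stopping condition (\texttt{[tiny]} or \texttt{[tang]}) is reached; a second procedure (\texttt{[alg 2]}) iterates this across dimensions $n, n-1, \dots, m \geq k$. The cellular steps never ``close a branch'' --- they contribute multiplicative parameters $D_\ell = d^{\#_{\stc}(J)}$ that are carried all the way to the final inequality. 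It is precisely this algorithmic reformulation (as opposed to Guth's original induction) that keeps the full nested data $(\delta_n, \dots, \delta_\ell)$ and $(S_n, \dots, S_\ell)$ available at the terminal stage, so that Theorem~\ref{mainThm} can be invoked in its multiscale form rather than one scale at a time via Theorem~\ref{PWA}. If you genuinely closed the cellular case by induction, that history would be lost and you would recover only the weaker single-scale bound.

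The step you describe as ``optimising the exponents'' is, in the paper, a concrete computation: one introduces auxiliary Lebesgue exponents $p_\ell$ (interpolated into the argument via the logarithmic-convexity property of the $k$-broad norms) and weights $\gamma_j$, demands that the resulting exponents $X_i$, $Y_i$ on $\delta_i/\delta$ and $D_i$ vanish, and solves the ensuing linear recursion to get $\gamma_j = (m-1)m/\big((j-1)j(j+1)\big)$; summing this telescoping expression is what produces the denominator $(n-1)n + (k-1)k$.
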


The proof of Theorem~\ref{main theorem}, which is based on the polynomial partitioning method and closely follows the arguments of \cite{Guth2016, Guth2018, HR2018}, will be presented in Sections~\ref{k-broad norms section}--\ref{proof section}. 

The key feature which distinguishes the $k$-broad norm from its $L^p$ counterpart is that the former vanishes whenever the tubes of $\T$ cluster around a $(k-1)$-dimensional set (see Lemma~\ref{vanishing lemma} for a precise statement of this property). Owing to this special behaviour, the inequality~\eqref{broad estimate} is substantially weaker than~\eqref{maximal inequality}. Nevertheless, a mechanism introduced by Bourgain and Guth~\cite{BG2011} allows one to pass from $k$-broad to linear estimates, albeit under a rather stringent condition on the exponent. 

\begin{proposition}[Bourgain--Guth~\cite{BG2011}, Guth~\cite{Guth2018}]\label{Bourgain--Guth} 
Let  
$
 p \ge \frac{n-k+2}{n-k+1}$,  $\varepsilon > 0$ and $A \sim 1$. Suppose that
\begin{equation}\label{broad again}\tag{$\BL{k}^p$}
    \Big\|\sum_{T\in \T}\chi_{T}\Big\|_{\mathrm{BL}^p_{k,A}(\R^n)}\,\lesssim\, \delta^{-(n-1 - n/p)-\varepsilon}\Big(\sum_{T \in \T} |T| \Big)^{1/p}
    \end{equation}
whenever $0<\delta<1$ and $\mathbb{T}$ is a direction-separated family of $\delta$-tubes. Then
 \begin{equation}\label{linear again}\tag{$\mathrm{K}_p$}
\Big\| \sum_{T\in\mathbb{T}} \chi_T\Big\|_{L^p(\mathbb{R}^n)}\,\lesssim\, \delta^{-(n-1 - n/p)-\varepsilon}\Big(\sum_{T \in \T} |T| \Big)^{1/p}
\end{equation}
whenever $0<\delta<1$ and $\mathbb{T}$ is a direction-separated family of $\delta$-tubes.
 \end{proposition}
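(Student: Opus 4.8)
The plan is to prove the Bourgain--Guth passage from $k$-broad to linear estimates via a pigeonholing-and-induction-on-scales argument, following \cite{BG2011, Guth2018}. First I would set up an induction on the radius of a ball: rather than working on all of $\R^n$ at once, I would prove that for every ball $B_R$ of radius $R$, one has $\|\sum_{T} \chi_T\|_{L^p(B_R)} \lesssim C_\varepsilon R^{\varepsilon} \delta^{-(n-1-n/p)}(\sum_T |T|)^{1/p}$ with an implied constant that does not blow up through the iteration; since $\delta^{-1}$ is the maximal relevant scale, this yields \eqref{linear again} with the loss $\delta^{-\varepsilon'}$ after summing the geometric series coming from the finitely many dyadic scales. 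The key dichotomy at each scale is the following: at a given ball $B$, either the $L^p$ mass of $\sum_{T \in \T} \chi_T$ on $B$ is dominated (up to an admissible constant and up to the narrow contributions handled separately) by the $k$-broad norm over $B$ --- in which case we simply invoke hypothesis \eqref{broad again} --- or else there exist $(k-1)$-planes $V_1, \dots, V_A$ such that all but a negligible portion of the mass comes from caps $\tau$ making angle $\le \beta$ with one of the $V_a$, i.e. the tubes concentrate near a union of $A$ lower-dimensional neighbourhoods.

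In the concentrated case I would exploit the lower-dimensional structure: the tubes $T \in \T[\tau]$ with $\angle(\tau, V_a) \le \beta$ all lie (within $B_R$) in an essentially $(k-1 + O(\beta))$-dimensional slab, and after rescaling that slab to unit size one is looking at a direction-separated family of $(\delta/\text{width})$-tubes in a lower-dimensional ambient space, or more precisely one iterates the linear estimate in the transverse directions and uses a trivial (one-dimensional / flat) bound along the slab. The combinatorial bookkeeping requires summing over the $\sim \beta^{-(n-1)}$ caps, over the $A$ planes, and over the two-parameter family of scales, which is where the exponent restriction $p \ge \frac{n-k+2}{n-k+1}$ enters: it is exactly the threshold at which the gain from passing to a lower-dimensional problem (a factor like $\beta^{\text{something positive}}$) beats the loss incurred by the triangle inequality over the $\beta^{-(n-1)}$ caps, so that the recursion converges rather than diverges. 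I would track the powers of $\beta$ carefully and choose $\beta$ (a fixed admissible constant) at the end.

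Concretely the steps, in order, are: (1) fix notation and reduce \eqref{linear again} to the claimed scale-$R$ estimate by a standard rescaling/parabolic-rescaling and the observation that it suffices to treat $R = \delta^{-1}$; (2) at scale $R$, partition $\R^n$ into $\sim R^n / (R/\beta)^n$ balls $B$ of radius $R/\beta$ and, on each, run the broad-vs-narrow dichotomy for the function $\sum_{T} \chi_T$; (3) in the broad subcase apply \eqref{broad again} directly on $B$ (using that the $k$-broad norm is monotone under restriction and satisfies the relevant localisation, cf.\ the properties to be recorded in Section~\ref{k-broad norms section}); (4) in the narrow subcase, for each of the $A$ planes $V_a$ decompose the contributing tubes into those clustering near $V_a$, rescale the corresponding slab, and apply the inductive hypothesis at the smaller scale $R/\beta$ in the lower-dimensional setting; (5) sum the contributions over caps, planes and balls, check that the resulting recursion $C(R) \le C_\varepsilon (R/\beta)^{\varepsilon} \cdot (\text{geometric factor}) \cdot (\ldots) + C(R/\beta)$ closes precisely when $p \ge \frac{n-k+2}{n-k+1}$, and conclude by iterating $O(\log R)$ times.

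The main obstacle I anticipate is Step (4)--(5): making the rescaling in the narrow case genuinely produce a \emph{lower-dimensional} Kakeya maximal estimate to which induction applies, and then verifying that the bookkeeping of the $\beta$-powers — the loss $\beta^{-(n-1)}$ from the number of caps, the gain from the reduced dimension, and the normalisation $(\sum_T |T|)^{1/p}$ which must be respected at each scale — balances exactly at the stated exponent. This is the heart of the Bourgain--Guth mechanism and is where all the arithmetic of the exponent lives; the rest is largely a matter of assembling standard facts about $k$-broad norms and induction on scales.
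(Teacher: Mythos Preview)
Your overall framework --- induction on scales, broad-versus-narrow dichotomy, closing the recursion at the stated exponent --- matches the paper, but Step~(4) contains a misconception that would prevent the argument from going through as written. You claim that the tubes with $\angle(\tau, V_a) \le \beta$ ``lie (within $B_R$) in an essentially $(k-1+O(\beta))$-dimensional slab'' and propose rescaling that slab to reduce to a \emph{lower-dimensional} Kakeya estimate. This is not correct in the Kakeya setting: the narrow condition constrains only the \emph{directions} of the tubes to lie near $V_a$, not their positions --- a tube with direction in $V_a$ can pass through any point of $\R^n$, so there is no slab containment and no lower-dimensional ambient space to pass to. (You may be importing intuition from the restriction problem, where in the narrow case $Ef$ genuinely concentrates near an affine subspace because of curvature; there is no such spatial concentration here.)

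What the paper actually does in the narrow case is the following. The broad/narrow split is carried out on each $\delta$-ball (not on $R/\beta$-balls); summing the narrow contributions over all $\delta$-balls yields $\beta^{-(k-2)(p-1)} \sum_\tau \|\sum_{T \in \T[\tau]} \chi_T\|_{L^p(\R^n)}^p$, where the prefactor comes from H\"older over the $\sim \beta^{-(k-2)}$ caps within angle $\beta$ of a fixed $(k-1)$-plane. Then for each \emph{single} cap $\tau$ one applies an anisotropic linear map $L$ fixing the axis of $\tau$ and dilating its orthogonal complement by $\beta^{-1}$; this converts $\T[\tau]$ into a direction-separated family of $\tilde\delta := \beta^{-1}\delta$-tubes in $\R^n$, to which the \emph{$n$-dimensional} induction hypothesis at the coarser scale $\tilde\delta$ applies. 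The resulting power of $\beta$ on the narrow term is $\beta^{(n-k+1)p - (n-k+2) + p\varepsilon}$, nonnegative precisely when $p \ge \tfrac{n-k+2}{n-k+1}$. Thus the ``dimension reduction'' you are looking for lives only in the cap count $\beta^{-(k-2)}$, and the induction remains in $\R^n$ throughout; your bookkeeping with a $\beta^{-(n-1)}$ loss from caps and a lower-dimensional gain would not balance at the correct exponent.
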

 
 Thus, combining Theorem~\ref{main theorem} and Proposition~\ref{Bourgain--Guth} yields Theorem~\ref{linear theorem}. In contrast with the range of Lebesgue exponents in Theorem~\ref{main theorem}, the range in which Proposition~\ref{Bourgain--Guth} applies shrinks as $k$ increases. The optimal compromise between the constraints in Theorem~\ref{main theorem} and Proposition~\ref{Bourgain--Guth} is given by \eqref{linear exponent}.

We end this section with a proof of Proposition~\ref{Bourgain--Guth}, which is a minor modification of the argument in~\cite{BG2011} (see also~\cite{Guth2018}).

\begin{proof}[Proof (of Proposition~\ref{Bourgain--Guth})] The proof is by an induction-on-scale argument. 

For the base case, fix $\delta\sim 1$ and let $\T$ be a family of direction-separated $\delta$-tubes. If $\mathcal{B}$ is a cover of $\R^n$ by finitely-overlapping balls of radius 1, then
$$
 \Bigl\|\sum_{T\in \T}\chi_{T}\Big\|_{L^p(\R^n)}^p\,\leq\,     \sum_{B \in \mathcal{B}} \Bigl\|\sum_{\substack{T\in \T\\ T \cap B \neq \emptyset}}\chi_{T}\Big\|_{L^p(B)}^p \,\lesssim\,  \sum_{B \in \mathcal{B}} \#\{T \in \T : T \subset 3B\}^p.
$$
The direction separation condition implies that $\# \T \lesssim 1$ and, consequently, \eqref{linear again} follows from H\"older's inequality and the fact that any tube $T \in \T$ can belong to at most $O(1)$ of the balls $3B$. 

 Now let $\mathbf{C}$ be a fixed constant, chosen sufficiently large so as to satisfy the requirements of the forthcoming argument, and fix some small $\delta>0$.

\vspace{0.5em}
\noindent{\it Induction hypothesis:} Suppose the inequality
\begin{equation*}
    \Bigl\|\sum_{\widetilde{T}\in \widetilde{\T}}\chi_{\widetilde{T}}\Big\|_{L^p(\R^n)}\,\leq\,    \mathbf{C} \tilde{\delta}^{-(n-1-n/p)-\varepsilon}\Big(\sum_{\widetilde{T} \in \widetilde{\T}} |\widetilde{T}|\Big)^{1/p}
\end{equation*}
holds whenever $\tilde{\delta}\in[2\delta ,1)$ and $\widetilde{\T}$ is a direction-separated family of $\tilde{\delta}$-tubes.
\vspace{0.5em}

\noindent

Let $\T$ be a direction-separated family of $\delta$-tubes. Fix a $\delta$-ball $B_{\;\!\!\delta} \in \mathcal{B}_{\delta}$ and subspaces $V_1, \dots, V_{\!A} \in \mathrm{Gr}(n,k-1)$ which obtain the minimum in the definition of $\mu_{\T}(B_{\;\!\!\delta})$; thus
\begin{equation*}
    \mu_{\T}(B_{\;\!\!\delta}) = \max_{\substack{\tau : \angle(\tau, V_a) > \beta \\ \textrm{for } 1 \leq a \leq A}} \Big\|\sum_{T \in \T[\tau]} \chi_T\Big\|_{L^p(B_{\;\!\!\delta})}^p .
\end{equation*}
Since $A \sim 1$ and $\#\{\tau : \angle(\tau, V_a) \leq \beta\} \sim \beta^{-(k-2)}$, by the triangle inequality followed by H\"older's inequality, 
\begin{align*}
    \int_{B_{\;\!\!\delta}}\big|\sum_{T \in \T} \chi_T\big|^p &\,\lesssim\, \int_{B_{\;\!\!\delta}}\big|\sum_{\substack{\tau : \angle(\tau, V_a) > \beta \\ \textrm{for } 1 \leq a \leq A}}\sum_{T \in \T[\tau]} \chi_T\big|^p + \sum_{a = 1}^A \int_{B_{\;\!\!\delta}}\big|\sum_{\tau : \angle(\tau, V_a) \leq \beta}\sum_{T \in \T[\tau]} \chi_T\big|^p \\
    &\,\lesssim\, \beta^{-(n-1)p} \mu_{\T}(B_{\;\!\!\delta}) + \beta^{-(k-2)(p-1)}\sum_{\tau}\int_{B_{\;\!\!\delta}}\big|\sum_{T \in \T[\tau]} \chi_T\big|^p.
\end{align*}
Summing the estimate over all the balls $B_{\;\!\!\delta} \in \mathcal{B}_{\delta}$, we find  that
\begin{equation*}
    \Big\|\sum_{T \in \T} \chi_T\Big\|_{L^p(\R^n)}^p \!\lesssim \beta^{-(n-1)p} \Big\|\sum_{T \in \T} \chi_T\Big\|_{\BL{k,A}^p(\R^n)}^p\! + \beta^{-(k-2)(p-1)}\sum_{\tau}\Big\|\sum_{T \in \T[\tau]} \!\!\chi_T\Big\|_{L^p(\R^n)}^p.
\end{equation*}

The first term on the right-hand side of the above display is estimated using the hypothesised broad estimate. For the second term, we apply a linear rescaling $L \colon \R^n \to \R^n$ so that
\begin{equation}\label{this}
    \Big\|\sum_{T \in \T[\tau]} \chi_T\Big\|^p_{L^p(\R^n)} = \beta^{n-1}\Big\|\sum_{T \in \T[\tau]} \chi_{L(T)}\Big\|^p_{L^p(\R^n)}
\end{equation}
where $\{L(T) : T \in \T[\tau]\}$ is essentially a collection of $\tilde{\delta}$-tubes with $\tilde{\delta} := \beta^{-1}\delta$. To be more precise, let $\omega \in S^{n-1}$ denote the centre of the cap $\tau$ and choose $L$ so that it fixes the 1-dimensional space spanned by $\omega$ and acts as a dilation by a factor of $\beta^{-1}$ on the orthogonal complement $\omega^{\perp}$. Writing $x \in \R^n$ as $x = (x',x_n)$ with $x' \in \omega^{\perp}$, for any $T \in \T[\tau]$ with $v := \mathrm{dir}(T)$ there exists some $u \in \R^n$ such that 
\begin{equation*}
    T \subseteq \big\{ x \in \R^n : |x' - u' - tv'| \lesssim \delta \textrm{ for some $|t| \leq 1$ and } |x_n - u_n| \leq 1/2 \big\},
\end{equation*}
Applying $L$ one obtains
\begin{equation*}
   L(T) \subseteq \big\{ y \in \R^n : |y' - \beta^{-1}u' - t\beta^{-1}v'| \lesssim \beta^{-1}\delta \textrm{ for some $|t| \leq 1$ and } |y_n - u_n| \leq 1/2 \big\}
\end{equation*}
and the right-hand side can be covered by a bounded number of $\tilde{\delta}$-tubes. Furthermore, the family of $\tilde{\delta}$-tubes $L(T)$ is also direction-separated.

Combining~\eqref{this} with the induction hypothesis we find that
\begin{equation*}
 \Big\|\sum_{T \in \T[\tau]} \chi_T\Big\|^p_{L^p(\R^n)}   \,\lesssim\,   \beta^{n-1}\mathbf{C}^p (\beta^{-1}\delta)^{-(n-1)p+n-p\varepsilon}(\beta^{-1}\delta)^{n-1}\#\T[\tau].
\end{equation*}
Recalling  that $\sum_{\tau}\#\T[\tau]=\#\T$, by plugging the preceding estimate into our $L^p(\R^n)$-norm bound,
\begin{equation*}
 \Big\|\sum_{T \in \T} \chi_T\Big\|_{L^p(\R^n)}^p  \leq\,  C \Big(C_{\mathrm{b}}(\beta) + \mathbf{C}^p\beta^{e(p,n,k)+p\varepsilon} \Big) \delta^{-(n-1)p+n-p\varepsilon}\Big(\sum_{T \in \T} |T|\Big); 
\end{equation*}
here $C_{\mathrm{b}}(\beta)$ depends, amongst other things, on the implied constant in \eqref{broad again} whilst $C$ is a constant depending only on $n$ and $p$ (and, in particular, is independent of the choice of $\beta$) and 
\begin{equation*}
    e(p,n,k) := (n -k + 1)p - (n - k + 2).
\end{equation*}

By assumption,  $p\ge \frac{n-k+2}{n-k+1}$ and therefore $e(p,n,k) \ge0$. Consequently, $\beta$ may be chosen sufficiently small, depending only on the admissible parameters $n$, $p$ and~$\varepsilon$, so that
\begin{equation*}
    C \beta^{e(p,n,k)+p\varepsilon}  \,\leq\,  \frac{1}{2}.
\end{equation*}
Moreover, if $\mathbf{C}$ is chosen sufficiently large from the outset, it follows that 
\begin{equation*}
       \Big\|\sum_{T \in \T} \chi_T\Big\|_{L^p(\R^n)}^p \,\leq\,   \mathbf{C}^p \delta^{-(n-1)p+n - p\varepsilon}\Big(\sum_{T \in \T} |T|\Big),
\end{equation*}
which closes the induction and completes the proof. 
\end{proof}




\section{Basic properties of the $k$-broad norms} \label{k-broad norms section}




\subsubsection*{Vanishing property} The proof of Theorem~\ref{main theorem} will involve analysing collections of tubes which enjoy certain tangency properties with respect to algebraic varieties.

\begin{definition}\label{transverse complete intersection definition} Given any collection of polynomials $P_1, \dots, P_{n-m} \colon \R^n \to \R$, recall that the common zero set
\begin{equation*}
    Z(P_1, \dots, P_{n-m}) := \{\bx \in \R^n : P_1(\bx) = \cdots = P_{n-m}(\bx) = 0\}
\end{equation*}
is referred to as a variety. It will often be convenient to work with varieties which satisfy the additional property that
\begin{equation}\label{non singular variety}
\bigwedge_{j=1}^{n-m}\nabla P_j(\bz) \neq 0 \qquad \textrm{for all $\bz \in \bZ = Z(P_1, \dots, P_{n-m})$.}
\end{equation}
In this case the zero set forms a smooth $m$-dimensional submanifold of $\R^n$ with a (classical) tangent space $T_{\bz}\bZ$ at every point $\bz \in \bZ$. A variety $\bZ$ which satisfies~\eqref{non singular variety} is said to be an \emph{$m$-dimensional transverse complete intersection}.
\end{definition}

\begin{definition}\label{tangent definition} Let $0 < \delta < r < 1$, $\bx_0 \in \R^n$ and $\bZ \subseteq \R^n$ be a transverse complete intersection. A $\delta$-tube $T \subset \R^n$ is \emph{tangent to $\bZ$ in $B(\bx_0, r)$} if 
\begin{enumerate}[i)]
\item $T \cap B(\bx_0, r) \cap N_{\;\!\!\delta}\bZ \neq \emptyset$ ;
\item If $\bx \in T$ and $\bz \in \bZ \cap B(\bx_0, 2r)$ satisfy $|\bz-\bx| \leq 8\delta$, then
\begin{equation*}
    \angle(\mathrm{dir}(T), T_{\bz}\bZ) \,\leq\,  c_{\tang} \frac{\delta}{r}. 
\end{equation*}
\end{enumerate}
\end{definition}

Here $0 < c_{\tang}$ is an admissible constant which is chosen small enough to ensure that, whenever i) and ii) hold, 
\begin{equation}\label{inclusion}
T \cap B(\bx_0, 2 r) \subseteq N_{\;\!\! 4 \delta}\bZ.
\end{equation} The fact that such a choice is possible follows from a simple calculus exercise (see, for instance,~\cite[Proposition~9.2]{GHI} for details of an argument of this type).

The raison d'\^etre for the $k$-broad norms is the following lemma, which roughly states that the broad norms vanish if the tubes in $\T$ cluster around a low dimensional variety.

\begin{lemma}[Vanishing property]\label{vanishing lemma} Given $\varepsilon_{\;\!\!\circ} > 0$ and $0<\beta<1$ there exists some $0 < c < 1$ such that the following holds. Let $0 < \delta < c$, $r > \delta^{1- \varepsilon_{\;\!\!\circ}}$, $\bx_0 \in \R^n$ and $\bZ \subseteq \R^n$ be a transverse complete intersection of dimension at most $k-1$. Then
\begin{equation*}
    \Big\| \sum_{T \in \T} \chi_T \Big\|_{\mathrm{BL}^p_{k,A}(B(\bx_0, r))} = 0
\end{equation*}
whenever $\T$ is a family of $\delta$-tubes which are tangent to $\bZ$ in $B(\bx_0, r)$.\footnote{Here the parameter $\beta$ appears implicitly in the definition of the $k$-broad norm.}
 \end{lemma}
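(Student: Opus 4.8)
The plan is to show that for every $\delta$-ball $B_{\;\!\!\delta} \in \mathcal{B}_{\delta}$ meeting $B(\bx_0, r)$, the quantity $\mu_{\T}(B_{\;\!\!\delta})$ vanishes; summing over such balls then gives the conclusion. The key point is that a suitable choice of subspaces $V_1, \dots, V_A \in \mathrm{Gr}(k-1,n)$ in the definition of $\mu_{\T}$ will force \emph{every} cap $\tau$ with $\T[\tau] \cap \{T : T \cap B_{\;\!\!\delta} \neq \emptyset\} \neq \emptyset$ to satisfy $\angle(\tau, V_a) \leq \beta$ for some $a$, so that the max over the admissible $\tau$ is taken over an empty index set (or over caps contributing nothing in $B_{\;\!\!\delta}$), hence is zero.

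\textbf{Main steps.} First I would fix $B_{\;\!\!\delta}$ intersecting $B(\bx_0, r)$ and pick a point $\bz_0 \in \bZ \cap B(\bx_0, 2r)$ lying within $O(\delta)$ of $B_{\;\!\!\delta}$; such a point exists for the balls that matter, since any tube $T$ through $B_{\;\!\!\delta}$ that is tangent to $\bZ$ in $B(\bx_0,r)$ meets $N_{\;\!\!\delta}\bZ$ near $B_{\;\!\!\delta}$ by property i) of Definition~\ref{tangent definition} combined with the inclusion \eqref{inclusion}. Set $V := T_{\bz_0}\bZ$, a subspace of dimension $\dim \bZ \leq k-1$; if $\dim \bZ < k-1$, enlarge $V$ arbitrarily to a $(k-1)$-plane. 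Second, take $V_1 = \dots = V_A = V$ in the definition of $\mu_{\T}(B_{\;\!\!\delta})$ (the value of $A$ is irrelevant here). Third, I would argue that any $T \in \T$ with $T \cap B_{\;\!\!\delta} \neq \emptyset$ has $\angle(\mathrm{dir}(T), V) \lesssim \delta/r < \beta$: indeed, since $T \cap B_{\;\!\!\delta} \cap N_{\;\!\!\delta}\bZ \neq \emptyset$, there is $\bx \in T$ and $\bz \in \bZ \cap B(\bx_0, 2r)$ with $|\bz - \bx| \leq 8\delta$, so property ii) of Definition~\ref{tangent definition} gives $\angle(\mathrm{dir}(T), T_{\bz}\bZ) \leq c_{\tang}\delta/r$; then a standard bound on how much the tangent plane of a bounded-degree variety can rotate over a scale-$r$ ball (the normals of the defining polynomials vary slowly; cf.\ the calculus estimate cited after Definition~\ref{tangent definition}) shows $\angle(T_{\bz}\bZ, V) \lesssim \delta / r$ as well, and $r > \delta^{1-\varepsilon_{\;\!\!\circ}}$ together with $\delta < c$ makes the total angle $< \beta$ once $c$ is small enough depending on $\varepsilon_{\;\!\!\circ}$ and $\beta$. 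Consequently every $\tau$ with $\T[\tau]$ contributing to $L^p(B_{\;\!\!\delta})$ has $\angle(\tau, V) \leq \beta$, so the inner maximum in $\mu_{\T}(B_{\;\!\!\delta})$ is over an empty collection and equals zero. Finally, summing $\mu_{\T}(B_{\;\!\!\delta}) = 0$ over all $B_{\;\!\!\delta} \in \mathcal{B}_{\delta}$ with $|B_{\;\!\!\delta} \cap B(\bx_0,r)| > 0$ gives $\|\sum_{T} \chi_T\|_{\mathrm{BL}^p_{k,A}(B(\bx_0,r))} = 0$.

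\textbf{Main obstacle.} The delicate step is the quantitative control of the rotation of the tangent spaces $T_{\bz}\bZ$ as $\bz$ ranges over $\bZ \cap B(\bx_0, 2r)$, i.e.\ showing $\angle(T_{\bz}\bZ, T_{\bz_0}\bZ) \lesssim \delta/r$ uniformly. This requires that $r$ be small enough relative to the geometry of $\bZ$; but since $\bZ$ is merely a transverse complete intersection of bounded degree with no a priori curvature bound, the honest statement is that one can only guarantee this \emph{after} taking $c$ (hence $\delta$, hence $r = \delta^{1-\varepsilon_{\;\!\!\circ}}$) sufficiently small — which is exactly the role of the constant $c$ in the lemma. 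I would handle this either by quoting the calculus lemma referenced after Definition~\ref{tangent definition} (as in \cite[Proposition~9.2]{GHI}), or, more robustly, by noting that we do not actually need the tangent planes to be close: it suffices that for \emph{each} individual tube $T$ through $B_{\;\!\!\delta}$ we have $\angle(\mathrm{dir}(T), V) < \beta$ for the \emph{single fixed} plane $V = T_{\bz_0}\bZ$, and this follows by applying property ii) at the point $\bz$ associated to $T$ and then absorbing $\angle(T_{\bz}\bZ, V)$ into the estimate — in the worst case one simply works ball-by-ball choosing $V$ adapted to each $B_{\;\!\!\delta}$, which is permitted since $V_1,\dots,V_A$ are chosen per ball in the definition of $\mu_{\T}$. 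Thus the whole argument is local to each $\delta$-ball and the only global input is the uniform angle bound from the definition of tangency.
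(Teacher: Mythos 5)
Your overall strategy is the same as the paper's: work ball-by-ball, pick a point $\bz_0 \in \bZ$ within $O(\delta)$ of $B_{\;\!\!\delta}$, take $V \supseteq T_{\bz_0}\bZ$, and show every tube meeting $B_{\;\!\!\delta}$ makes angle at most $\beta$ with $V$, so that every cap $\tau$ with $\angle(\tau,V)>\beta$ contributes nothing. However, the step you yourself flag as the ``main obstacle'' is a genuine gap, and your proposed repairs do not close it. You obtain the angle bound for a tube $T$ by applying property ii) of Definition~\ref{tangent definition} at a point $\bz$ \emph{associated to that tube} and then asserting a uniform rotation estimate $\angle(T_{\bz}\bZ, T_{\bz_0}\bZ) \lesssim \delta/r$. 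No such estimate is available: $\bZ$ is only a transverse complete intersection of bounded degree, with no curvature bound, and the constant $c$ in the lemma is allowed to depend only on $\varepsilon_{\;\!\!\circ}$ and $\beta$ (and admissible parameters), not on $\bZ$; even at scale $O(\delta)$ the tangent planes of such a variety can turn through a large angle near a high-curvature point, and shrinking $\delta$ does not help since $\bZ$ is arbitrary of its degree. The reference you cite (\cite[Proposition~9.2]{GHI}, i.e.\ the remark after Definition~\ref{tangent definition}) establishes the containment \eqref{inclusion}, not a tangent-plane stability bound. Your ``more robust'' alternative is circular: absorbing $\angle(T_{\bz}\bZ, V)$ into the estimate is precisely the missing rotation bound, and ``choosing $V$ per ball'' is already what you are doing — the problem occurs \emph{within} a single ball, where different tubes are close to $\bZ$ at different points.

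The fix — and this is what the paper does — is to notice that no tangent-plane comparison is needed, because condition ii) in Definition~\ref{tangent definition} is quantified over \emph{all} pairs $(\bx,\bz)$ with $\bx \in T$, $\bz \in \bZ \cap B(\bx_0,2r)$ and $|\bz - \bx| \leq 8\delta$, not merely over one witness per tube. Choose a single tube $T_0$ meeting $B_{\;\!\!\delta}$; by \eqref{inclusion} its intersection with $B_{\;\!\!\delta}$ lies in $N_{\;\!\!4\delta}\bZ$, so there is $\bz_0 \in \bZ$ with $|\bz_0 - \by_0| < 4\delta$ for some $\by_0 \in T_0 \cap B_{\;\!\!\delta}$; then for \emph{any} $T \in \T$ and any $\bx \in T \cap B_{\;\!\!\delta}$ one has $|\bx - \bz_0| < 8\delta$ (both $\bx$ and $\by_0$ lie in the radius-$\delta$ ball $B_{\;\!\!\delta}$), so condition ii) applied with this same $\bz_0$ gives $\angle(\mathrm{dir}(T), T_{\bz_0}\bZ) \leq c_{\tang}\,\delta/r \leq \beta$ directly, once $\delta < c$ with $c$ small depending on $\varepsilon_{\;\!\!\circ}$ and $\beta$ (using $r > \delta^{1-\varepsilon_{\;\!\!\circ}}$). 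With this correction the rest of your argument (empty maximum in $\mu_{\T}(B_{\;\!\!\delta})$, then summing over the $\delta$-balls) goes through as in the paper. A minor additional imprecision: tangency gives $T \cap B(\bx_0,r) \cap N_{\;\!\!\delta}\bZ \neq \emptyset$, not $T \cap B_{\;\!\!\delta} \cap N_{\;\!\!\delta}\bZ \neq \emptyset$; the statement you need near $B_{\;\!\!\delta}$ comes from \eqref{inclusion}, as above.
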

 
 \begin{proof} Fix $B_{\;\!\!\delta} \in \mathcal{B}_{\delta}$ with $B_{\;\!\!\delta} \cap B(\bx_0, r) \neq \emptyset$. Recalling the definition of the $k$-broad norm, it suffices to show that there exists some $V \in \mathrm{Gr}(k-1,n)$ such that 
 \begin{equation*}
     \max_{\tau  : \angle(\tau, V) > \beta} \int_{B_{\;\!\!\delta}}  \big| \sum_{T \in \T[\tau]} \chi_T \big|^p = 0.
 \end{equation*}
This would follow if $V$ has the property that
\begin{equation}\label{vanishing 1}
\textrm{if $T \in \T$ satisfies $T \cap B_{\;\!\!\delta} \neq \emptyset$, then $\angle(\mathrm{dir}(T), V) \leq \beta$.}
\end{equation}

Without loss of generality, one may assume there exists some $T_0 \in \T$ such that $T_0 \cap B_{\;\!\!\delta} \neq \emptyset$ (otherwise~\eqref{vanishing 1} vacuously holds for any choice of $(k-1)$-dimensional subspace). By the containment property resulting from the tangency hypothesis,
\begin{equation*}
  T_0 \cap  B_{\;\!\!\delta}  \subseteq T_0 \cap B(\bx_0,2r) \subseteq N_{\;\!\!4 \delta} \bZ
\end{equation*}
and therefore there exists some $\bz_0 \in \bZ$ such that $|\bz_0 - \by_0| < 4 \delta$ for some $\by_0 \in T_0 \cap B_{\;\!\!\delta}$. Let $V$ be a $(k-1)$-dimensional subspace containing $T_{\bz_0} \bZ$. Given any $T \in \T$, if $\bx \in T \cap B_{\;\!\!\delta}$ then $|\bx - \bz_0| <  8 \delta$ and property ii) of the tangency hypothesis implies 
\begin{equation*}
     \angle(\mathrm{dir}(T), V) \,\lesssim\, \frac{\delta}{r} . 
\end{equation*}
Since $r > \delta^{1- \varepsilon_{\;\!\!\circ}}$, it follows that $ \angle(\mathrm{dir}(T), V) \leq \beta$ provided $\delta$ is sufficiently small depending only on $\varepsilon_{\;\!\!\circ}$ and $\beta$, which completes the proof.  
 \end{proof}



\subsubsection*{Triangle and logarithmic convexity inequalities} The $k$-broad norms satisfy weak  variants of certain key properties of $L^p$-norms.

\begin{lemma}[Finite subadditivity] Let $U_1, U_2 \subseteq \R^n$,  $1 \leq p < \infty$ and $A\in \mathbb{N}$. Then
\begin{equation*}
\Big\|\sum_{T \in \T} \chi_T\Big\|_{\mathrm{BL}^p_{k,A}(U_1 \cup U_2)}^p \,\leq\,  \Big\|\sum_{T \in \T} \chi_T\Big\|_{\mathrm{BL}^{p}_{k,A}(U_1)}^p + \Big\|\sum_{T \in \T} \chi_T\Big\|_{\mathrm{BL}^{p}_{k,A}(U_2)}^p
\end{equation*} 
whenever $\T$ is a family of $\delta$-tubes.
\end{lemma}

\begin{lemma}[Triangle inequality]\label{triangle inequality lemma} Let $U \subseteq \R^n$, $1 \leq p < \infty$ and $A\in\mathbb{N}$. Then
\begin{equation*}
\Big\|\sum_{T \in \T_1 \cup \T_2} \chi_T \Big\|_{\mathrm{BL}^p_{k,2A}(U)} \,\lesssim\, \Big\|\sum_{T \in \T_1} \chi_T\Big\|_{\mathrm{BL}^p_{k,A}(U)} + \Big\|\sum_{T \in \T_2} \chi_T\Big\|_{\mathrm{BL}^p_{k,A}(U)}
\end{equation*} 
whenever $\T_1$ and $\T_2$ are families of $\delta$-tubes.
\end{lemma}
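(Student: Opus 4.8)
The plan is to unwind the definition of the $k$-broad norm ball by ball and to exploit the fact that, since the subspaces $V_1,\dots,V_A$ in the definition of $\mu_{\T}(B_{\;\!\!\delta})$ are chosen to minimise a maximum over caps, doubling the number of available subspaces allows one to accommodate the two families $\T_1$ and $\T_2$ simultaneously. First I would fix a $\delta$-ball $B_{\;\!\!\delta} \in \mathcal{B}_{\delta}$ and, for $i = 1, 2$, choose subspaces $V_1^{(i)}, \dots, V_A^{(i)} \in \mathrm{Gr}(k-1,n)$ realising the minimum in the definition of $\mu_{\T_i}(B_{\;\!\!\delta})$, so that
\begin{equation*}
\mu_{\T_i}(B_{\;\!\!\delta}) = \max_{\substack{\tau : \angle(\tau, V_a^{(i)}) > \beta \\ \textrm{for } 1 \leq a \leq A}} \Big\| \sum_{T \in \T_i[\tau]} \chi_T \Big\|_{L^p(B_{\;\!\!\delta})}^p.
\end{equation*}
(The minimum is attained since the function being minimised is lower semicontinuous on the compact Grassmannian; alternatively one works with near-minimisers and lets the error tend to $0$ at the end.) The list of $2A$ subspaces $V_1^{(1)}, \dots, V_A^{(1)}, V_1^{(2)}, \dots, V_A^{(2)}$ is then admissible in the minimum defining $\mu_{\T_1 \cup \T_2}(B_{\;\!\!\delta})$.

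The main step is a per-ball estimate. For any cap $\tau$ with $\angle(\tau, V_a^{(i)}) > \beta$ for all $1 \leq a \leq A$ and both $i = 1, 2$, the identity $(\T_1 \cup \T_2)[\tau] = \T_1[\tau] \cup \T_2[\tau]$ and nonnegativity of the summands give the pointwise bound $\sum_{T \in (\T_1 \cup \T_2)[\tau]} \chi_T \leq \sum_{T \in \T_1[\tau]} \chi_T + \sum_{T \in \T_2[\tau]} \chi_T$, whence by the $L^p(B_{\;\!\!\delta})$ triangle inequality and $(a+b)^p \leq 2^{p-1}(a^p + b^p)$,
\begin{equation*}
\Big\| \sum_{T \in (\T_1 \cup \T_2)[\tau]} \chi_T \Big\|_{L^p(B_{\;\!\!\delta})}^p \leq 2^{p-1} \bigg( \Big\| \sum_{T \in \T_1[\tau]} \chi_T \Big\|_{L^p(B_{\;\!\!\delta})}^p + \Big\| \sum_{T \in \T_2[\tau]} \chi_T \Big\|_{L^p(B_{\;\!\!\delta})}^p \bigg).
\end{equation*}
Taking the maximum over all such $\tau$, one only increases each term on the right by discarding the angle conditions relative to the subspaces $V_1^{(3-i)}, \dots, V_A^{(3-i)}$; by the choice of the $V_a^{(i)}$ the resulting two maxima are exactly $\mu_{\T_1}(B_{\;\!\!\delta})$ and $\mu_{\T_2}(B_{\;\!\!\delta})$. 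Since the $2A$-subspace list is admissible for $\mu_{\T_1 \cup \T_2}$, this gives $\mu_{\T_1 \cup \T_2}(B_{\;\!\!\delta}) \leq 2^{p-1}\big( \mu_{\T_1}(B_{\;\!\!\delta}) + \mu_{\T_2}(B_{\;\!\!\delta}) \big)$.

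To finish, I would multiply this pointwise-in-$B_{\;\!\!\delta}$ inequality by the weights $|B_{\;\!\!\delta} \cap U|/|B_{\;\!\!\delta}|$ and sum over $\mathcal{B}_{\delta}$, obtaining
\begin{equation*}
\Big\| \sum_{T \in \T_1 \cup \T_2} \chi_T \Big\|_{\mathrm{BL}^p_{k,2A}(U)}^p \leq 2^{p-1}\bigg( \Big\| \sum_{T \in \T_1} \chi_T \Big\|_{\mathrm{BL}^p_{k,A}(U)}^p + \Big\| \sum_{T \in \T_2} \chi_T \Big\|_{\mathrm{BL}^p_{k,A}(U)}^p \bigg),
\end{equation*}
and then take $p$-th roots, using $(x^p + y^p)^{1/p} \leq x + y$ for $p \geq 1$, to conclude with implied constant at most $2$. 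I do not expect any genuine obstacle: the only points requiring a moment's thought are the attainment of the minimum over the Grassmannian (or the harmless passage to near-minimisers) and the elementary observation that imposing more angle constraints can only shrink the collection of caps entering the maximum; the rest is the $L^p$ triangle inequality applied on each $\delta$-ball.
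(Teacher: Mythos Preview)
Your argument is correct and is exactly the standard one: the paper does not actually write out a proof here, stating only that the estimates are ``entirely elementary'' and that ``the proofs are identical to those used to analyse broad norms in the context of the Fourier restriction problem~\cite{Guth2018}'', and your write-up is precisely that argument. The key mechanism you identify---combining the $A$ minimising subspaces for $\T_1$ with the $A$ minimising subspaces for $\T_2$ into a list of $2A$ subspaces admissible in the definition of $\mu_{\T_1\cup\T_2}$---is indeed the reason the parameter $A$ is built into the definition, as the paper remarks.
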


\begin{lemma}[Logarithmic convexity]\label{logarithmic convexity inequality lemma} Let $U \subseteq \R^n$, $1 \leq p, p_0, p_1 < \infty$ and $A\in\mathbb{N}$. Suppose that $\theta\in[0,1]$ satisfies
\begin{equation*}
\frac{1}{p} = \frac{1-\theta}{p_0} + \frac{\theta}{p_1}.
\end{equation*}
Then
\begin{equation*}
\Big\|\sum_{T \in \T} \chi_T\Big\|_{\mathrm{BL}^p_{k,2A}(U)} \,\lesssim\, \Big\|\sum_{T \in \T} \chi_T\Big\|_{\mathrm{BL}^{p_0}_{k,A}(U)}^{1-\theta} \Big\|\sum_{T \in \T} \chi_T\Big\|_{\mathrm{BL}^{p_1}_{k,A}(U)}^{\theta}
\end{equation*} 
whenever $\T$ is a family of $\delta$-tubes.
\end{lemma}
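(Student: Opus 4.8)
The plan is to establish the inequality first at the level of a single $\delta$-ball and then sum over all of them using H\"older's inequality on the resulting weighted sequence space. To keep track of the (suppressed) dependence of $\mu_{\T}$ on the Lebesgue exponent and on the number of subspaces, write, for $q \in \{p,p_0,p_1\}$ and $M \in \{A,2A\}$,
$$\mu^{q,M}(B_{\;\!\!\delta}) := \min_{V_1,\dots,V_M \in \mathrm{Gr}(k-1,n)} \,\max_{\substack{\tau:\,\angle(\tau,V_a)>\beta\\ 1\le a\le M}} \Big\|\sum_{T\in\T[\tau]}\chi_T\Big\|_{L^q(B_{\;\!\!\delta})}^q ,$$
so that $\big\|\sum_{T\in\T}\chi_T\big\|_{\mathrm{BL}^q_{k,M}(U)}^q = \sum_{B_{\;\!\!\delta}\in\mathcal{B}_\delta} \tfrac{|B_{\;\!\!\delta}\cap U|}{|B_{\;\!\!\delta}|}\,\mu^{q,M}(B_{\;\!\!\delta})$.

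First I would prove the pointwise (in $B_{\;\!\!\delta}$) estimate
$$\mu^{p,2A}(B_{\;\!\!\delta}) \le \big(\mu^{p_0,A}(B_{\;\!\!\delta})\big)^{(1-\theta)p/p_0}\big(\mu^{p_1,A}(B_{\;\!\!\delta})\big)^{\theta p/p_1}$$
for every $B_{\;\!\!\delta}\in\mathcal{B}_\delta$. To do so, fix (near-)minimizing configurations $V^{(0)}_1,\dots,V^{(0)}_A$ for the minimum defining $\mu^{p_0,A}(B_{\;\!\!\delta})$ and $V^{(1)}_1,\dots,V^{(1)}_A$ for the minimum defining $\mu^{p_1,A}(B_{\;\!\!\delta})$, and use the concatenated list of $2A$ subspaces $\big(V^{(0)}_a, V^{(1)}_a\big)$ as a competitor in the minimum defining $\mu^{p,2A}(B_{\;\!\!\delta})$. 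Any cap $\tau$ making angle more than $\beta$ with all $2A$ of these subspaces makes angle more than $\beta$ with each of the two original $A$-tuples, so the elementary interpolation inequality for Lebesgue norms on $B_{\;\!\!\delta}$ gives, for each such $\tau$,
$$\Big\|\sum_{T\in\T[\tau]}\chi_T\Big\|_{L^p(B_{\;\!\!\delta})} \le \Big\|\sum_{T\in\T[\tau]}\chi_T\Big\|_{L^{p_0}(B_{\;\!\!\delta})}^{1-\theta}\Big\|\sum_{T\in\T[\tau]}\chi_T\Big\|_{L^{p_1}(B_{\;\!\!\delta})}^{\theta} \le \big(\mu^{p_0,A}(B_{\;\!\!\delta})\big)^{(1-\theta)/p_0}\big(\mu^{p_1,A}(B_{\;\!\!\delta})\big)^{\theta/p_1},$$
where the last step uses that $\tau$ is admissible for the two chosen $A$-tuples and the definition of the respective maxima. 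Since the final bound is independent of $\tau$, I take the maximum over admissible $\tau$ and raise to the power $p$ to obtain the displayed per-ball estimate.

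Next I would multiply the per-ball estimate by the weight $w_{B_{\;\!\!\delta}} := |B_{\;\!\!\delta}\cap U|/|B_{\;\!\!\delta}|$ and sum over $B_{\;\!\!\delta}\in\mathcal{B}_\delta$. Since $\tfrac{(1-\theta)p}{p_0}+\tfrac{\theta p}{p_1}=p\big(\tfrac{1-\theta}{p_0}+\tfrac{\theta}{p_1}\big)=1$, and since $\tfrac{1-\theta}{p_0},\tfrac{\theta}{p_1}\le\tfrac1p$ forces the exponents $\tfrac{p_0}{(1-\theta)p}$ and $\tfrac{p_1}{\theta p}$ to be conjugate and at least $1$, H\"older's inequality for the discrete measure $w$ yields
$$\sum_{B_{\;\!\!\delta}} w_{B_{\;\!\!\delta}} \big(\mu^{p_0,A}(B_{\;\!\!\delta})\big)^{(1-\theta)p/p_0}\big(\mu^{p_1,A}(B_{\;\!\!\delta})\big)^{\theta p/p_1} \le \Big(\sum_{B_{\;\!\!\delta}} w_{B_{\;\!\!\delta}}\mu^{p_0,A}(B_{\;\!\!\delta})\Big)^{(1-\theta)p/p_0}\Big(\sum_{B_{\;\!\!\delta}} w_{B_{\;\!\!\delta}}\mu^{p_1,A}(B_{\;\!\!\delta})\Big)^{\theta p/p_1}.$$
Recognising the two factors on the right as $\big\|\sum_{T\in\T}\chi_T\big\|_{\mathrm{BL}^{p_0}_{k,A}(U)}^{(1-\theta)p}$ and $\big\|\sum_{T\in\T}\chi_T\big\|_{\mathrm{BL}^{p_1}_{k,A}(U)}^{\theta p}$, and the left-hand side as an upper bound for $\big\|\sum_{T\in\T}\chi_T\big\|_{\mathrm{BL}^p_{k,2A}(U)}^p$, the claim follows on taking $p$-th roots (indeed with implied constant $1$). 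The edge cases $\theta\in\{0,1\}$ are trivial since then $p$ coincides with $p_0$ or $p_1$.

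I do not expect a genuine obstacle here: the argument is essentially bookkeeping. The only point requiring care is the per-ball step, where the doubling $A\mapsto 2A$ is exactly what allows the competitor configurations for the two endpoint norms to be deployed \emph{simultaneously}, together with the trivial observation that transversality to the concatenated $2A$-tuple implies transversality to each of the two original $A$-tuples; the finite-overlap structure of $\mathcal{B}_\delta$ plays no role since the same family of $\delta$-balls appears in all three $k$-broad norms.
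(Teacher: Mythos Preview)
Your proposal is correct and follows essentially the same approach as the paper, which in fact does not spell out the argument but simply remarks that it is elementary and identical to the broad-norm interpolation in \cite{Guth2018}. Your concatenation of the two $A$-tuples of minimising subspaces to form a competitor for the $2A$-minimum, followed by per-ball H\"older and then discrete H\"older over $\mathcal{B}_\delta$, is precisely that standard argument.
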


These estimates are entirely elementary. The proofs are identical to those used to analyse broad norms in the context of the Fourier restriction problem~\cite{Guth2018}. It is remarked that the parameter $A$ appears in the definition of the $k$-broad norm to allow for these weak triangle and logarithmic convexity inequalities.

\subsubsection*{$k$-broad versus $k$-linear estimates}

Although not required for the proof of Theorem~\ref{linear theorem}, it is perhaps instructive to note the relationship between the $k$-broad norms and the multilinear expressions appearing in the work of Bennett--Carbery--Tao~\cite{BCT2006}.

\begin{proposition}\label{k-broad vrs k-linear} Let $\T$ be a collection of $\delta$-tubes in $\R^n$. Then
\begin{equation*}
\Big\|\sum_{T\in \T}\chi_{T}\Big\|_{\mathrm{BL}^p_{k,A}(\R^n)}\,\lesssim\, \Bigg(\sum_{\substack{(\tau_1, \dots, \tau_k) \\ \sim \,\beta^{k-1}\mathrm{\!\!-trans.}}}\Bigl\|\prod_{j=1}^{k}\Big(\sum_{T_{j}\in\T[\tau_{j}]}\chi_{N_{\;\!\!2\delta}T_{j}}\Big)^{1/k}\Bigr\|_{L^{p}(\R^n)}^p\Bigg)^{1/p}
\end{equation*}
where the sum is over all $k$-tuples $(\tau_1, \dots, \tau_k)$ of caps of diameter $\beta$ which are $\sim \beta^{k-1}$-transversal in the sense that $|\bigwedge_{j=1}^{k} \omega_j | \gtrsim \beta^{k-1}$ for all $\omega_j \in \tau_j$.
\end{proposition}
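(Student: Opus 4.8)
The plan is to prove Proposition~\ref{k-broad vrs k-linear} by reducing the definition of the $k$-broad norm, evaluated on a single $\delta$-ball $B_{\;\!\!\delta}$, to a $k$-linear expression associated to a well-chosen transversal $k$-tuple of caps. First I would fix a $\delta$-ball $B_{\;\!\!\delta} \in \mathcal{B}_{\delta}$ and let $V_1, \dots, V_A \in \mathrm{Gr}(k-1,n)$ realise the minimum in the definition of $\mu_{\T}(B_{\;\!\!\delta})$, so that the quantity to control is $\max_{\tau} \|\sum_{T \in \T[\tau]}\chi_T\|_{L^p(B_{\;\!\!\delta})}^p$ over caps $\tau$ making angle $> \beta$ with every $V_a$. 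The key geometric observation is a pigeonholing/general-position step: among any collection of caps that are each $\beta$-transversal to all $A$ of the subspaces $V_a$, one can (after choosing $A$ large enough, $A \sim 1$) extract caps $\tau_1, \dots, \tau_k$ that are mutually $\sim \beta^{k-1}$-transversal, i.e.\ $|\bigwedge_{j=1}^k \omega_j| \gtrsim \beta^{k-1}$ for all $\omega_j \in \tau_j$. Concretely, take $\tau_1 = \tau$ the maximising cap; then since $\tau_2$ must avoid the $\le A$ neighbourhoods $N_\beta(\mathrm{span}(V_a \cup \tau_1))$ of $(k-1)$-dimensional spaces, one can iterate: at stage $\ell$, the span of $\tau_1, \dots, \tau_{\ell-1}$ has dimension $\le \ell - 1 \le k-1$, and requiring $\tau_\ell$ to be $\beta$-transversal to this span (which is arranged by including it among the $V_a$, hence the need for $A$ to absorb $k$ extra subspaces) produces the next cap. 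This is essentially the standard argument from~\cite{Guth2018} showing broad norms are controlled by multilinear expressions.

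Once the transversal $k$-tuple $(\tau_1, \dots, \tau_k)$ is in hand, the next step is a pointwise comparison on $B_{\;\!\!\delta}$. On $B_{\;\!\!\delta}$ we have the trivial bound
\begin{equation*}
\Big(\sum_{T \in \T[\tau_1]} \chi_T\Big) \lesssim \prod_{j=1}^k \Big(\sum_{T_j \in \T[\tau_j]} \chi_{N_{\;\!\!2\delta}T_j}\Big)^{1/k}
\end{equation*}
whenever $B_{\;\!\!\delta}$ is such that the left-hand side is nonzero, because (after dilating each $T_j$ by a factor comparable to $2$) the nonvanishing of $\sum_{T \in \T[\tau_1]}\chi_T$ forces, via the transversality and the selection of $\tau_1$ as the maximiser, that each factor on the right is also $\gtrsim$ the left side on that ball — indeed one may iterate the defining minimum over the $V_a$ to see that whenever tubes in direction $\tau_1$ pass through $B_{\;\!\!\delta}$, so do comparably many tubes in each direction $\tau_j$. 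Raising to the $p$-th power and integrating over $B_{\;\!\!\delta}$, then weighting by $|B_{\;\!\!\delta} \cap U|/|B_{\;\!\!\delta}|$ and summing over all $\delta$-balls, converts the broad norm on the left-hand side of the proposition into a sum of $L^p(\R^n)$-norms of the $k$-linear expressions; finally, since the cap $\tau_1$ (and the associated transversal tuple) depends on the ball, one bounds the maximum over balls by the sum over all transversal $k$-tuples, which yields exactly the claimed right-hand side.

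The main obstacle I anticipate is making the pointwise comparison step fully rigorous: the inequality $\sum_{T \in \T[\tau_1]}\chi_T \lesssim \prod_j (\sum_{T_j \in \T[\tau_j]}\chi_{N_{2\delta}T_j})^{1/k}$ on $B_{\;\!\!\delta}$ is \emph{not} a pointwise inequality between the functions on all of $\R^n$, but only holds in the averaged/thickened sense appropriate to the discretised setting, and one must be careful that the transversal tuple selected for a given ball is the one associated to the \emph{maximising} cap for that ball, so that the right-hand sum over \emph{all} transversal tuples genuinely dominates. This is precisely the point where the parameter $A$ and the minimum over subspaces in the definition of $\mu_\T$ do their work: enlarging $A$ to accommodate the $k$ auxiliary spaces $\mathrm{span}(\tau_1, \dots, \tau_{\ell-1})$ guarantees that the maximising cap is automatically part of a transversal tuple, so no information is lost. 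The remaining steps — the triangle inequality in $\ell^p$ to split the sum over balls into a sum over tuples, and the routine $N_{2\delta}$-thickening to pass from $\chi_T$ to $\chi_{N_{2\delta}T}$ — are entirely standard and essentially identical to the corresponding reduction for broad norms in~\cite{Guth2018}.
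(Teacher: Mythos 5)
The paper itself omits the proof (deferring to \cite{GHI}), so I am judging your argument on its own terms. Your overall architecture is the standard one and is the right one: work ball by ball, use the freedom in the definition of $\mu_{\T}(B_{\delta})$ to produce a $\beta^{k-1}$-transversal $k$-tuple, exploit the thickening (any tube meeting $B_{\delta}$ satisfies $B_{\delta} \subseteq N_{2\delta}T$), then sum over balls and dominate a maximum over tuples by the sum over tuples. But the step carrying all the quantitative content is missing, and the mechanisms you offer for it do not work. First, you fix $V_1,\dots,V_A$ to be the minimisers in $\mu_{\T}(B_{\delta})$ and then say the auxiliary spans of $\tau_1,\dots,\tau_{\ell-1}$ are ``arranged by including [them] among the $V_a$'': you cannot do both. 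The minimum exists precisely so that you may \emph{choose} the subspaces; once you declare them to be the minimising ones you have no control over them, and ``enlarging $A$'' is not free either, since $\mathrm{BL}^p_{k,A}$ \emph{decreases} in $A$, so a bound proved for a larger $A$ does not imply the stated one (what is really needed is $A \geq k-1$, harmless in context, but it should be said). Second, and more seriously, the per-ball comparison is asserted rather than proved: the claim that ``whenever tubes in direction $\tau_1$ pass through $B_{\delta}$, so do comparably many tubes in each direction $\tau_j$'' does not follow from transversality or from ``iterating the defining minimum''; transversality gives no lower bound whatsoever on the tube counts in the other caps. Moreover, your pigeonholing claim — that from the caps broad with respect to the minimising $V_a$ one can always extract a transversal $k$-tuple — is false as stated: that collection can consist of fewer than $k$ caps (e.g.\ two stray caps surviving after a cluster of directions near a $(k-1)$-plane has been excluded), and the correct argument does not draw the auxiliary caps from the broad collection at all.

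The repair is a greedy selection ordered by tube counts in which the broad maximiser enters \emph{last}, not first. Set $n_{\tau} := \#\{T \in \T[\tau] : T \cap B_{\delta} \neq \emptyset\}$. Choose $\tau_1$ maximising $n_{\tau}$ over all caps; having chosen $\tau_1,\dots,\tau_{\ell}$, pick $V_{\ell} \in \mathrm{Gr}(k-1,n)$ containing the (centre) directions of $\tau_1,\dots,\tau_{\ell}$ and choose $\tau_{\ell+1}$ maximising $n_{\tau}$ among caps with $\angle(\tau,V_a) > \beta$ for all $a \leq \ell$. Now invoke the minimum in $\mu_{\T}(B_{\delta})$ with \emph{this} choice $V_1,\dots,V_{k-1}$ (repeated to fill $A$ slots): either no cap broad with respect to all $V_a$ carries a tube through $B_{\delta}$, in which case $\mu_{\T}(B_{\delta}) = 0$, or $\mu_{\T}(B_{\delta}) \leq |B_{\delta}|\, n_{\tau^*}^p$ for the maximising broad cap $\tau^*$. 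By construction $(\tau_1,\dots,\tau_{k-1},\tau^*)$ is $\sim\beta^{k-1}$-transversal, and since $\tau^*$ competes in every one of the nested maximisations defining the $\tau_j$, one has $n_{\tau^*} \leq n_{\tau_j}$ for every $j$, hence $n_{\tau^*} \leq \big(\prod_{j=1}^{k-1} n_{\tau_j}\cdot n_{\tau^*}\big)^{1/k}$. Combined with $\sum_{T_j \in \T[\tau_j]} \chi_{N_{2\delta}T_j} \geq n_{\tau_j}$ pointwise on $B_{\delta}$, this yields the per-ball bound $\mu_{\T}(B_{\delta}) \leq \int_{B_{\delta}} \prod_{j}\big(\sum_{T_j\in\T[\tau_j]}\chi_{N_{2\delta}T_j}\big)^{p/k}$ for one transversal tuple, and your concluding summation over balls and tuples then goes through unchanged. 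Note that the inequality you need is the one-sided $n_{\tau^*} \leq n_{\tau_j}$, not ``comparably many''; the version you wrote, with the broad maximiser placed first and the other factors forced to dominate it ``via transversality,'' is exactly the assertion for which no proof is given.
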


Thus, any $k$-linear inequality of the type featured in~\cite{BCT2006, Guth2010, BG2011} is stronger than the corresponding $k$-broad estimate (given that $\beta$ is admissible).

The proof of Proposition~\ref{k-broad vrs k-linear} is a simple exercise and is omitted (see~\cite{GHI} for similar results in the (more complicated) context of oscillatory integral operators). 


%




\section{Polynomial partitioning}\label{polynomial partitioning section}

In this section the algebraic and topological ingredients for the proof of Theorem~\ref{main theorem} are reviewed. In particular, the key polynomial partitioning theorem is recalled, which is adapted from~\cite{Guth2016, Guth2018} (see also~\cite{Wang}) 
and previously appeared explicitly in~\cite{HR2018}. 

 Given a polynomial $P \colon \R^n \to \R$ consider the collection  $\cell(P)$  
 of connected components of $\R^n \setminus Z(P)$. Each $O' \in \cell(P)$ is referred to as a \emph{cell} cut out by the variety $Z(P)$ and the cells are thought of as partitioning the ambient euclidean space into a finite collection of disjoint regions. 

In order to account for the choice of scale $\delta > 0$ appearing in the definition of the $\delta$-tubes, it will be useful to consider the family of \emph{$\delta$-shrunken cells} defined by
\begin{equation}\label{shrunken cells}
    \O := \big\{ O'\setminus N_{\;\!\!\delta}Z(P) : O' \in \cell(P) \big\}.
\end{equation}
An important consequence of this definition is the following simple observation:
\begin{quote}
    A $\delta$-tube $T$ can enter at most $\deg P + 1$ of the shrunken cells~$O \in \O$.
\end{quote}
Indeed, this is a simple and direct consequence of the fundamental theorem of algebra (or B\'ezout's theorem) applied to the core line of $T$.

\begin{theorem}[Guth~\cite{Guth2018}]\label{partitioning theorem} Fix $0 < \delta < r$, $x_0 \in \R^n$ and suppose $F \in L^1(\R^n)$ is non-negative and supported on $B(\bx_0,r) \cap N_{\;\!\! 4\delta}\bZ$ where $\bZ$ is an $m$-dimensional transverse complete intersection with $\deg\bZ \leq d$. At least one of the following cases holds:\\

\paragraph{\underline{Cellular case}} There exists a polynomial $P \colon \R^n \to \R$ of degree $O(d)$ with the following properties:
\begin{enumerate}[i)]
    \item $\#\cell(P) \sim d^m$ and each $O \in \cell(P)$ has diameter at most $r/2$.
    \item One may pass to a refinement of $\cell(P)$ such that if $\O$ is defined as in~\eqref{shrunken cells}, then 
    \begin{equation*}
        \int_{O} F \sim d^{-m}\int_{\R^n} F \qquad \textrm{for all $O \in \O$.}
    \end{equation*}
\end{enumerate}  
\paragraph{\underline{Algebraic case}} There exists an $(m-1)$-dimensional  transverse complete intersection $\bY$ of  degree at most $O(d)$ such that
    \begin{equation*}
        \int_{B(\bx_0,r) \cap N_{\;\!\! 4\delta}\bZ} F \,\lesssim\,\log d \int_{B(\bx_0,r) \cap N_{\;\!\!\delta}\bY} F.
    \end{equation*}
\end{theorem}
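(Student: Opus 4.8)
The plan is to follow Guth's iterated polynomial partitioning scheme, adapted so that the number of cells reflects the dimension $m$ of $\bZ$ rather than the ambient dimension $n$. We normalise $\int_{\R^n} F = 1$ and work with the measure $\mu := F\,dx$, supported on $W := B(\bx_0,r)\cap N_{4\delta}\bZ$. The partitioning polynomial is built as a product $P = Q_1 Q_2\cdots Q_k$ of $k \sim m\log d$ cutting polynomials, and the cellular/algebraic dichotomy will arise from whether these cutting surfaces genuinely separate the mass of $\mu$ or instead absorb a large portion of it into their $\delta$-neighbourhoods.

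The core is the bisection step. Suppose after $j$ steps the nonempty positive-mass cells cut out by $Q_1\cdots Q_j$ are $O_1^{(j)},\dots$ with $\#\{O_i^{(j)}\}\sim 2^j$ and $\mu(O_i^{(j)})\sim 2^{-j}$. Applying the polynomial ham sandwich (Stone--Tukey) theorem produces a single nonzero $Q_{j+1}$ whose zero set simultaneously bisects $\mu$ restricted to each current cell. Done naively this costs degree $\lesssim (\#\text{cells})^{1/n}\sim 2^{j/n}$; the key gain is that, since $\mu$ lives within $N_{4\delta}\bZ$, one may first fix a generic linear projection $\pi$ onto an $m$-plane along which $\pi|_{\bZ}$ is finite-to-one with fibre cardinality bounded in terms of $d$, bisect the pushforward measures downstairs in $\R^m$ at cost $\lesssim (\#\text{cells})^{1/m}\sim 2^{j/m}$, and pull back. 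Summing the geometric series gives $\deg P = \sum_j \deg Q_{j+1}\lesssim 2^{k/m}\sim d$, while the sign pattern of $(Q_1,\dots,Q_k)$ yields $\sim 2^k\sim d^m$ cells of measure $\sim d^{-m}$ each; the reverse bound $\#\cell(P)\lesssim d^m$ follows from a Milnor--Thom / B\'ezout count. The diameter constraint $\mathrm{diam}\,O\le r/2$ is arranged by prepending to $P$ a bounded-complexity arrangement of hyperplanes partitioning $B(\bx_0,r)$ into finitely many pieces of diameter $\le r/2$ and running the bisection in parallel across these pieces, the resulting $O(1)$ loss of balance being absorbed upon passing to a refinement.

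Now track, at each step $j$, the portion $\mu(N_{\delta}Z(Q_{j+1}))$ of mass lying within $\delta$ of the cutting surface. If the total over all $k\sim\log d$ steps is at most $\tfrac12$, then passing to the $\delta$-shrunken cells $O'\setminus N_{\delta}Z(P)$ removes at most half the mass, and after discarding the lightest cells one retains $\gtrsim d^m$ cells with $\mu(O)\sim d^{-m}$: this is the cellular case. Otherwise pigeonholing gives a single step at which $\mu(N_{\delta}Z(Q_{j+1}))\gtrsim 1/\log d$. The candidate lower-dimensional variety is $\bZ\cap Z(Q_{j+1})$, which is $(m-1)$-dimensional and, viewed as a complete intersection, has degree $\le \deg\bZ + \deg Q_{j+1} = O(d)$; after a small generic perturbation $Q_{j+1}\rightsquigarrow Q_{j+1}+\eta R$ with $R$ a generic polynomial of the same degree and $\eta\ll\delta$, a Bertini/Sard-type argument makes the intersection a genuine transverse complete intersection $\bY$, while a quantitative estimate of the form $N_{4\delta}\bZ\cap N_{\delta}Z(Q_{j+1})\subseteq N_{C\delta}\bY$, valid precisely because of the restored transversality, shows the lost mass sits in $N_{C\delta}\bY$; rescaling $\delta$ by the fixed constant $C$ at the outset then yields the algebraic case with the stated $\log d$ loss.

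The main obstacle is the passage to the algebraic case: one must (i) carry out the dimension reduction in the bisection cleanly enough that all degrees stay $O(d)$ even after the perturbation, and (ii) upgrade the bare concentration statement $\mu(N_{\delta}Z(Q))\gtrsim 1/\log d$ into concentration near a bona fide $(m-1)$-dimensional \emph{transverse} complete intersection. Step (ii) is the genuinely geometric part: the perturbation must restore transversality of $Z(Q)$ to $\bZ$ everywhere along their intersection without displacing $Z(Q)$ by more than $\ll\delta$, and one needs the quantitative neighbourhood estimate under transversality, where the familiar subtleties of intersecting $\varepsilon$-neighbourhoods of varieties appear. Everything else is bookkeeping with the polynomial ham sandwich theorem and geometric series, and can be taken verbatim from \cite{Guth2016, Guth2018, HR2018}.
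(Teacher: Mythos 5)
A preliminary remark on the comparison: the paper does not prove Theorem~\ref{partitioning theorem} at all --- it is quoted from Guth \cite{Guth2018} (having appeared explicitly in \cite{HR2018}), with only the comment that the proof combines the Stone--Tukey polynomial ham sandwich theorem with pigeonholing --- so your sketch has to be judged against Guth's argument rather than anything written in this paper.

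Judged on those terms, there is a genuine gap, and it sits exactly where the whole content of the theorem lies: the algebraic case. You pigeonhole a $\gtrsim 1/\log d$ fraction of the mass into $N_{\delta}Z(Q_{j+1})$, perturb $Q_{j+1}$ by $\eta R$ with $\eta\ll\delta$ to make $\bY:=\bZ\cap Z(Q_{j+1}+\eta R)$ a transverse complete intersection, and then assert the inclusion $N_{4\delta}\bZ\cap N_{\delta}Z(Q_{j+1})\subseteq N_{C\delta}\bY$, ``valid precisely because of the restored transversality''. This inclusion is false, and transversality of $\bY$ cannot rescue it: transversality is a condition along $\bY$, whereas the failure occurs at points where $Z(Q_{j+1})$ is nearly tangent to $\bZ$ far from their common zero set. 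Concretely, take $n=3$, $m=2$, $\bZ=\{x_3=0\}$ and $Q=x_3-x_1^2-x_2^2$. Then $N_{\delta}\bZ\cap N_{\delta}Z(Q)$ contains the slab $\{x_1^2+x_2^2\le \delta/2,\ |x_3|\le \delta/2\}$ of diameter $\sim\sqrt{\delta}$, while $\bZ\cap Z(Q)$ is a single point, and a generic perturbation of size $\eta\ll\delta$ replaces it by the empty set or a circle of radius $\ll\sqrt{\delta}$; the point $(\sqrt{\delta/2},0,0)$ lies in the slab but at distance $\sim\sqrt{\delta}\gg C\delta$ from $\bY$. Since $F$ may be concentrated precisely on such a near-tangency region and the ham sandwich theorem gives you no quantitative transversality of the cutting surfaces to $\bZ$ (the same phenomenon occurs for your cylindrical cuts $q\circ\pi$, near the critical points of $\pi|_{\bZ}$, e.g.\ a plane cylinder tangent to a sphere), the dichotomy as you have set it up does not close. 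Any repair has to produce the $(m-1)$-dimensional variety differently --- for instance via level sets $\bZ\cap\{Q=c\}$ for regular values $c$ (Sard), for which every point of $N_{4\delta}\bZ$ is trivially $O(\delta)$-close to the level set through the nearby point of $\bZ$, followed by a further selection argument to land on a \emph{single} $\bY$ of degree $O(d)$ with only a $\log d$ loss --- and this is precisely the non-trivial geometric step that your ``everything else is bookkeeping'' dismissal leaves unaddressed. A secondary, non-fatal point: your ``key gain'' from projecting to an $m$-plane is backwards --- bisecting $2^{j}$ measures with polynomials on $\R^{n}$ costs degree $\sim 2^{j/n}\le 2^{j/m}$, so the naive iteration already yields total degree $\lesssim d^{m/n}\le d$ and $\lesssim d^{m}$ cells by Milnor--Thom; the projection is harmless for the cellular count but it is not what makes the numerology work.
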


This theorem is based on an earlier discrete partitioning result which played a central role in the resolution of the Erd\H{o}s distance conjecture~\cite{GK2015}. The proof is essentially topological, involving the polynomial ham sandwich theorem of Stone--Tukey~\cite{Stone1942}, which is itself a consequence of the Borsuk--Ulam theorem (see, for instance,~\cite{Matousek}), combined with a pigeonholing argument.

The theorem  is applied to $k$-broad norms by taking\begin{equation*}
   F= \sum_{B_{\;\!\!\delta} \in \cB_{\;\!\!\delta}} \mu_{\T}(B_{\;\!\!\delta}) \frac{1}{|B_{\;\!\!\delta}|} \chi_{B_{\;\!\!\delta}  }.
\end{equation*}
 \begin{itemize} 
 \item If the cellular case holds, then it follows that
 \begin{equation*}
    \Big\|\sum_{T\in \T}\chi_{T}\Big\|^p_{\mathrm{BL}^p_{k,A}(B(\bx_0,r) \cap N_{\;\!\!4\delta}\bZ)} \,\lesssim\, d^{m}\Big\|\sum_{T\in \T}\chi_{T}\Big\|_{\BL{k,A}^p(O)}^{p} \ \textrm{for all $O \in \O$}
 \end{equation*}
 where $\O$ is the collection of cells produced by Theorem~\ref{partitioning theorem}.
 \item If the algebraic case holds, then it follows that
 \begin{equation*}
    \Big\|\sum_{T\in \T}\chi_{T}\Big\|^p_{\mathrm{BL}^p_{k,A}(B(\bx_0,r) \cap N_{\;\!\!4\delta}\bZ)} \,\lesssim\, 
     \log d\,\Big\|\sum_{T\in \T}\chi_{T}\Big\|^p_{\mathrm{BL}^p_{k,A}(B(\bx_0,r) \cap N_{\;\!\!\delta}\bY)}
\end{equation*}
where $\bY$ is the variety produced by Theorem~\ref{partitioning theorem}.
\end{itemize}




\section{Finding polynomial structure}\label{structure lemma section}

In this section, the recursive argument used to study the Fourier restriction problem in~\cite{HR2018} (which, in turn, is adapted from~\cite{Guth2018}) is reformulated so as to apply to the Kakeya problem. As in~\cite{HR2018}, the argument will be presented as two separate algorithms:
\begin{itemize}
    \item \texttt{[alg 1]} effects a dimensional reduction, essentially passing from an $m$-dimensional to an $(m-1)$-dimensional situation. 
    \item  \texttt{[alg 2]} consists of repeated application of the first algorithm to reduce to a minimal dimensional case. 
\end{itemize}

The final outcome is a method of decomposing any given $k$-broad norm into pieces which are either easily controlled or enjoy special algebraic structure. This decomposition applies to \emph{arbitrary} families of $\delta$-tubes. In the following section, we will specialise to the case where the tube family is direction-separated and use this additional information to prove Theorem~\ref{main theorem}. 

\subsection*{The first algorithm}  Throughout this section let $p \geq 1$ and $0< \varepsilon_{\;\!\!\circ} \ll \varepsilon \ll 1$ be fixed.\\

\paragraph{\underline{\texttt{Input}}} \texttt{[alg 1]} will take as its input:
\begin{itemize}
    \item A choice of small scale $0<\delta\ll 1$ and large scale $r_0\in[\delta^{1-\varepsilon_{\;\!\!\circ}},\delta^{\varepsilon_{\;\!\!\circ}}]$.
    \item A transverse complete intersection $\bZ$ of dimension $m\in\{2,\ldots,n\}$.
    \item A family  $\T$ of $\delta$-tubes which are tangent to $\bZ$ on a ball $B_{\;\!\!r_0}$ of radius $r_0$.
    \item A large integer $A \in \N$. 
\end{itemize}

%
%
%
\paragraph{\underline{\texttt{Output}}} \texttt{[alg 1]} will output a finite sequence of sets $(\sE_j)_{j=0}^J$, which are constructed via a recursive process. Each $\sE_j$ is referred to as an \emph{ensemble} and contains all the relevant information coming from the $j$th step of the algorithm. In particular, the ensemble $\sE_j$ consists of:
\begin{itemize}
    \item A word $\fh_j$ of length $j$ in the alphabet $\{\ta, \tc\}$, referred to as a {\it history}. 
    The  $\ta$ is an abbreviation of ``algebraic'' and~$\tc$ ``cellular''. The words $\fh_j$ are recursively defined by successively adjoining a single letter. Each $\fh_j$ records how the cells $O_j \in \O_j$ were constructed via repeated application of the polynomial partitioning theorem. 
    \item A large scale $r_j\in[\delta^{1-\varepsilon_{\;\!\!\circ}},\delta^{\varepsilon_{\;\!\!\circ}}]$. The $r_j$ will in fact be completely determined by the initial scales and the history $\fh_j$. In particular, 
let $\sigma_{k} \colon [0,1] \to [0,1]$ be given by
    \begin{equation*}
        \sigma_{k}(r) := \left\{ \begin{array}{ll}
        \frac{r}{2} &  \textrm {if the $k$th letter of $\fh_j$ is $\tc$} \\[6pt]
        r^{1+\varepsilon_{\;\!\!\circ}} & \textrm{if the $k$th letter of $\fh_j$ is $\ta$}
        \end{array}\right. 
    \end{equation*}
for each $1 \leq k \leq j$. With these definitions, 
\begin{equation*}
r_j := \sigma_{j} \circ \cdots \circ \sigma_{1}(r_0).
\end{equation*}
Note that each $\sigma_{k}$ is a decreasing function and 
\begin{equation}\label{radius bounds}
r_j \leq \delta^{\varepsilon_{\;\!\!\circ}(1+\varepsilon_{\;\!\!\circ})^{\#_{\sta}(j)}}   \quad \textrm{and} \quad r_j \leq 2^{-\#_{\stc}(j)}\delta^{\varepsilon_{\;\!\!\circ}}  
\end{equation}
  where $\#_{\bta}(j)$ and $\#_{\btc}(j)$ denote the number of occurrences of $\ta$ and $\tc$ in the history $\fh_j$, respectively. 
  \item A family of subsets $\O_j$ of $\R^n$ which will be referred to as \emph{cells}. Each cell $O_j \in \O_j$ is contained in $B_{\;\!\!r_0}$ and will have diameter at most $2r_j$. 
\item An assignment of a subfamily $\T[O_j]$ of $\delta$-tubes to each of the cells $O_j$. 
  \item A large integer $d \in \N$ which depends only on $\deg\bZ$ and the admissible parameters $n$, $p$ and $\varepsilon$.
  \end{itemize}

Moreover, the components of the ensemble are defined so as to ensure that, for certain coefficients
\begin{equation*}
   C_{j}(d):= d^{\#_{\stc}(j)\varepsilon_{\;\!\!\circ}} d^{\#_{\sta}(j)(n+\varepsilon_{\;\!\!\circ})} 
\end{equation*}
and $A_j := 2^{-\#_{\sta}(j)}A \in \N$, the following properties hold:\\

\paragraph{\underline{Property I}} The  function $\sum_{T\in \T} \chi_T$ on $B_{\;\!\!r_0}$ can be compared with functions defined over the $\T[O_j]$:
\begin{equation} \tag*{$(\mathrm{I})_j$}
      \Big\|\sum_{T\in \T} \chi_T\Big\|_{\BL{k,A}^p(B_{\;\!\!r_0})}^p \leq \,\,C_{j}(d)\! \sum_{O_{j} \in \O_{j}} \Big\|\sum_{T\in \T[O_j]} \chi_T\Big\|_{\BL{k,A_j}^p(O_j)}^p .
\end{equation}
\\

\paragraph{\underline{Property II}} The tube families $\T[O_j]$ satisfy
\begin{equation}\tag*{$(\mathrm{II})_j$}
    \sum_{O_{j} \in \O_{j}} \# \T[O_j] \,\leq\, C_{j}(d)d^{\#_{\stc}(j)}  \#\T.
\end{equation}
\paragraph{\underline{Property III}} Furthermore, each individual $\T[O_j]$ satisfies
\begin{equation}\tag*{$(\mathrm{III})_j$}
     \#\T[O_{j}] \,\leq\, C_{j}(d)  d^{-\#_{\stc}(j)(m-1)}  \#\T.
\end{equation}

%
%
%

\subsection*{The initial step}

The initial ensemble $\sE_0$ is defined by taking:
\begin{itemize}
    \item $\fh := \emptyset$ to be the empty word;
    \item $r_0$ to be the large scale;
    \item $\O_0$ the collection consisting of the single ball $O_0 := B_{\;\!\!r_0}$;
    \item $\T[O_0] := \T$.
\end{itemize} 
All the desired properties then vacuously hold. 

At this point it is also convenient to fix some large $d \in \N$, to be determined later, which depends only on  $\deg\bZ$ and the admissible parameters $n$, $p$ and $\varepsilon$. 

With these definitions, it is trivial to verify that Properties I, II and III hold.

%
%
%

\subsection*{The recursive step} Assume the ensembles $\sE_0, \dots, \sE_j$ have been constructed for some $j \in \N_0$ and that they all satisfy the desired properties. \\

%
%
%

\paragraph{\underline{\texttt{Stopping conditions}}} The algorithm has two stopping conditions which are labelled \texttt{[tiny]} and \texttt{[tang]}. 
\begin{itemize}
\item[\texttt{Stop:[tiny]}] The algorithm terminates if $r_j \leq \delta^{1-\varepsilon_{\;\!\!\circ}}$.
\end{itemize}

\begin{itemize}
\item[\texttt{Stop:[tang]}]Let $C_{\textrm{\texttt{tang}}}$ and $C_{\alg}$ be fixed constants, chosen large enough to satisfy the forthcoming requirements of the proof. The algorithm terminates if the inequalities
\begin{equation*}
    \sum_{O_j \in \O_j} \Big\|\sum_{T \in \T[O_j]} \chi_T\Big\|_{\BL{k,A_j}^p(O_j)}^p \,\leq\,  C_{\textrm{\texttt{tang}}} \log d  \sum_{S \in \Sc} \Big\|\sum_{T \in \T[S]} \chi_T\Big\|_{\BL{k,A_j/2}^p(B[S])}^p
\end{equation*}
and
\begin{align*}
   \sum_{S \in \Sc}  \#\T[S]  &\,\leq \,C_{\textrm{\texttt{tang}}}\delta^{-n\varepsilon_{\;\!\!\circ}}\!\!\sum_{O_j \in \O_j}  \#\T[O_j]; \\ 
  \nonumber
  \max_{S \in \Sc}  \#\T[S]  &\,\leq \,C_{\textrm{\texttt{tang}}}\max_{O_j \in \O_j}  \#\T[O_j]
\end{align*}
hold for some choice of:
\end{itemize}
\begin{itemize}
    \item $\Sc$ a collection of transverse complete intersections in $\R^n$ all of equal dimension $m-1$ and degree at most $C_{\alg}d$;
    \item An assignment of a subfamily $\T[S]$ of $\T$ and a $\max\{r_j^{1 +\varepsilon_{\;\!\!\circ}}, \delta^{1 -\varepsilon_{\;\!\!\circ}}\}$-ball $B[S]$ to each $S \in \Sc$ with the property that each $T \in \T[S]$ is tangent to $S$ in $B[S]$ in the sense of Definition~\ref{tangent definition}.
\end{itemize}

The stopping condition \texttt{[tang]} can be roughly interpreted as forcing the algorithm to terminate if one can pass to a lower dimensional situation. Indeed, by the inclusion property \eqref{inclusion}, the broad norm over $B[S]$ could instead be taken over a $4\delta$-neighbourhood of $S$.

If either of the above conditions hold, then the stopping time is defined to be $J := j$. Recalling~\eqref{radius bounds}, the stopping condition \texttt{[tiny]} implies that the algorithm must terminate after finitely many steps and, moreover, 
\begin{equation*}
  \#_{\bta}(J) \,\lesssim\, \varepsilon_{\;\!\!\circ}^{-1} \log(\varepsilon_{\;\!\!\circ}^{-1}) \quad \textrm{and} \quad  \#_{\btc}(J) \,\lesssim\, \log \delta^{-1}.
\end{equation*}
Note that there can be relatively few algebraic steps $\#_{\bta}(j)$ but there can many cellular steps $\#_{\btc}(j)$. The first of the above estimates can also be used to show that $C_{j}(d)\lesssim_{d,\varepsilon_{\;\!\!\circ}}d^{\#_{\stc}(j)\varepsilon_{\;\!\!\circ}}$ always holds. Furthermore, by choosing $A \geq 2^{\varepsilon_{\;\!\!\circ}^{-2}}$, say, one may ensure that the $A_j$ defined above are indeed integers.
\\

%
%
%

\paragraph{\underline{\texttt{Recursive step}}}

Suppose that neither stopping condition \texttt{[tiny]} nor \texttt{[tang]} is met. One proceeds to construct the ensemble $\sE_{j+1}$ as follows. 

Given $O_j \in \O_j$, apply the polynomial partitioning theorem with degree $d$ to 
\begin{equation*}
    \Big\|\sum_{T \in \T[O_j]} \chi_T\Big\|_{\BL{k,A_j}^p(O_j\cap N_{\;\!\!4\delta}\bZ)}^p = \Big\|\sum_{T \in \T[O_j]} \chi_T\Big\|_{\BL{k,A_j}^p(O_j)}^p.
\end{equation*}
 For each $O_j \in \O_j$ either the cellular or the algebraic case holds, as defined in Theorem~\ref{partitioning theorem}. Let $\O_{j,\cell}$ denote the subcollection of $\O_j$ consisting of all cells for which the cellular case holds and $\O_{j,\alg} := \O_j \setminus \O_{j,\cell}$. Thus, by $(\mathrm{I})_j$, one may bound $\|\sum_{T \in \T} \chi_T\|_{\BL{k,A}^p(B_{\;\!\!r_0})}^p$ by
\begin{equation*}
    C_{j}(d)  \Big[ \sum_{O_j \in \O_{j,\cell}} \Big\|\sum_{T \in \T[O_j]} \chi_T\Big\|_{\BL{k,A_j}^p(O_j)}^p + \sum_{O_j \in \O_{j,\alg}} \Big\|\sum_{T \in \T[O_j]} \chi_T\Big\|_{\BL{k,A_j}^p(O_j)}^p \Big];
\end{equation*}
the analysis is splits into two cases depending on which term in the above sum dominates.




\subsection*{$\blacktriangleright$ Cellular-dominant case} Suppose that the inequality
\begin{equation*}
     \sum_{O_j \in \O_{j,\alg}} \Big\|\sum_{T \in \T[O_j]} \chi_T\Big\|_{\BL{k,A_j}^p(O_j)}^p \leq \sum_{O_j \in \O_{j,\cell}} \Big\|\sum_{T \in \T[O_j]} \chi_T\Big\|_{\BL{k,A_j}^p(O_j)}^p 
\end{equation*}
holds so that 
\begin{equation}\label{cellular dominant case}
   \Big\|\sum_{T \in \T} \chi_T\Big\|_{\BL{k,A}^p(B_{\;\!\!r_0})}^p\leq \,2C_{j}(d)\!\! \sum_{O_j \in \O_{j,\cell}} \Big\|\sum_{T \in \T[O_j]} \chi_T\Big\|_{\BL{k,A_j}^p(O_j)}^p.
\end{equation}

\paragraph{\underline{Definition of $\sE_{j+1}$}} Define $\fh_{j+1}$ by adjoining the letter $\tc$ to the word $\fh_j$. Thus, it follows from the definitions that 
\begin{equation}\label{cellular word}
    r_{j+1} =  \tfrac{1}{2}r_j, \quad \#_{\btc}(j+1) = \#_{\btc}(j)+1 \quad \textrm{and} \quad \#_{\bta}(j+1) = \#_{\bta}(j).
\end{equation}

The next generation of cells $\O_{j+1}$ arise from the cellular decomposition guaranteed by Theorem~\ref{partitioning theorem}. Fix $O_j \in \O_{j, \cell}$ so that there exists some polynomial $P \colon \R^n \to \R$ of degree $O(d)$ with the following properties:
\begin{enumerate}[i)]
    \item $\#\cell(P) \sim d^m$ and each $O \in \cell(P)$ has diameter at most $2r_{j+1}$. 
    \item One may pass to a refinement of $\cell(P)$ such that if 
    \begin{equation*}
        \O_{j+1}(O_j) := \big\{ O\setminus N_{\;\!\!\delta}Z(P) : O \in \cell(P)\}
    \end{equation*}
    denotes the corresponding collection of $\delta$-shrunken cells, then 
    \begin{equation*}
    \Big\|\sum_{T \in \T[O_j]} \chi_T\Big\|_{\BL{k,A_j}^p(O_j)}^p \,\lesssim\, d^{m}\Big\|\sum_{T \in \T[O_j]} \chi_T\Big\|_{\BL{k,A_j}^p(O_{j+1})}^p
\end{equation*}
for all $O_{j+1} \in \O_{j+1}(O_j)$.
    \end{enumerate}
 Given $O_{j+1} \in \O_{j+1}(O_j)$, define
 \begin{equation*}
 \T[O_{j+1}] := \big\{ T \in \T[O_j] : T \cap O_{j+1} \neq \emptyset \big\}.
\end{equation*}
 Recall that, by the fundamental theorem of algebra (or B\'ezout's theorem), any $\delta$-tube~$T$ can enter at most $O(d)$ cells $O_{j+1} \in \O_{j+1}(O_j)$ and, consequently, 
\begin{equation}\label{cellular 1}
    \sum_{O_{j+1} \in \O_{j+1}(O_j)} \#\T[O_{j+1}] \,\lesssim\, d \cdot \#\T[O_j].
\end{equation}
By the pigeonhole principle, one may pass to a refinement of $\O_{j+1}(O_j)$ such that
 \begin{equation}\label{cellular 2}
    \#\T[O_{j+1}] \,\lesssim\, d^{-(m-1)} \#\T[O_j] \qquad \textrm{for all $O_{j+1} \in \O_{j+1}(O_j)$.}
\end{equation}
 Finally, define 
 \begin{equation*}
 \O_{j+1} := \bigcup_{O_j \in \O_{j, \cell}} \O_{j+1}(O_j).
\end{equation*}
This completes the construction of $\sE_{j+1}$ and 
it remains to check that the new ensemble satisfies the desired properties. In view of this, it is useful to note that
\begin{equation}\label{cellular coefficients}
C_{j}(d) = d^{-\varepsilon_{\;\!\!\circ}} C_{j+1}(d) \quad \textrm{and}\quad A_j = A_{j+1},
\end{equation}
which follows immediately from~\eqref{cellular word} and the definition of the $C_{j}(d)$ and $A_j$. \\

%
%
%
\paragraph{\underline{Property I}} Fix $O_j \in \O_{j,\cell}$ and observe that $\#\O_{j+1}(O_j) \sim d^m$ and
\begin{equation*}
    \Big\|\sum_{T \in \T[O_j]} \chi_T\Big\|_{\BL{k,A_j}^p(O_j)}^p  \,\lesssim\, d^{m}\Big\|\sum_{T \in \T[O_{j+1}]} \chi_T\Big\|_{\BL{k,A_j}^p(O_{j+1})}^p
\end{equation*}
for all $O_{j+1} \in \O_{j+1}(O_j)$. Averaging,
\begin{equation*}
    \Big\|\sum_{T \in \T[O_j]} \chi_T\Big\|_{\BL{k,A_j}^p(O_j)}^p \,\lesssim \sum_{O_{j+1} \in \O_{j+1}(O_j)}  \Big\|\sum_{T \in \T[O_{j+1}]} \chi_T\Big\|_{\BL{k,A_j}^p(O_{j+1})}^p
\end{equation*}
and, recalling~\eqref{cellular dominant case} and~\eqref{cellular coefficients}, one deduces that
\begin{equation*}
    \Big\|\sum_{T \in \T} \chi_T\Big\|_{\BL{k,A}^p(B_{\;\!\!r_0})}^p \leq \,\,C d^{-\varepsilon_{\;\!\!\circ}} C_{j+1}(d)\!\! \sum_{O_{j+1} \in \O_{j+1}}  \Big\|\sum_{T \in \T[O_{j+1}]} \chi_T\Big\|_{\BL{k,A_{j+1}}^p(O_{j+1})}^p.
\end{equation*}
Provided $d$ is chosen large enough so as to ensure that the additional $d^{-\varepsilon_{\;\!\!\circ}}$ factor absorbs the unwanted constant $C$, one deduces $(\mathrm{I})_{j+1}$. This should be compared
with the approach of Solymosi and Tao to polynomial partitioning \cite{TS2012}.
\\

%
%
%
\paragraph{\underline{Property II}} By the construction, 
\begin{align*}
    \sum_{O_{j+1} \in \O_{j+1}} \#\T[O_{j+1}] &\,=\, \sum_{O_j \in \O_j}  \sum_{O_{j+1} \in \O_{j+1}(O_j)} \#\T[O_{j+1}]  \\
    &\,\lesssim\, d \sum_{O_j \in \O_j} \#\T[O_j], 
\end{align*}
where the inequality follows from a term-wise application of~\eqref{cellular 1}. Thus, $(\mathrm{II})_j$,~\eqref{cellular word} and~\eqref{cellular coefficients} imply that
\begin{equation*}
    \sum_{O_{j+1} \in \O_{j+1}} \#\T[O_{j+1}] \,\lesssim\, d^{-\varepsilon_{\;\!\!\circ}} C_{j+1}(d)  d^{\#_{\stc}(j+1)} \#\T.
\end{equation*}
Provided $d$ is chosen sufficiently large, one deduces $(\mathrm{II})_{j+1}$.\\

%
%
%
\paragraph{\underline{Property III}} Fix $O_{j+1} \in \O_{j+1}(O_j)$ and recall from~\eqref{cellular 2} that
\begin{equation*}
    \#\T[O_{j+1}] \,\lesssim\, d^{-(m-1)}\#\T[O_{j}].
\end{equation*}
Thus, $(\mathrm{III})_j$,~\eqref{cellular word} and~\eqref{cellular coefficients} imply that
\begin{equation*}
    \#\T[O_{j+1}]\,\lesssim\, d^{-\varepsilon_{\;\!\!\circ}} C_{j+1}(d) d^{-\#_{\stc}(j+1)(m-1)} \#\T[O_{j}].
\end{equation*}
Provided $d$ is chosen sufficiently large as before, one deduces $(\mathrm{III})_{j+1}$.




\subsection*{$\blacktriangleright$ Algebraic-dominant case} Suppose the hypothesis of the cellular-dominant case fails so that
\begin{equation}\label{algebraic dominant case}
   \Big\|\sum_{T \in \T} \chi_T\Big\|_{\BL{k,A}^p(B_{\;\!\!r_0})}^p \leq \,\,2C_{j}(d)\!\!  \sum_{O_j \in \O_{j,\alg}} \Big\|\sum_{T \in \T[O_j]} \chi_T\Big\|_{\BL{k,A_j}^p(O_j)}^p.
\end{equation} 
Each cell in $\O_{j,\alg}$ satisfies the condition of the algebraic case of Theorem~\ref{partitioning theorem}; this information is used to construct the $(j+1)$-generation ensemble. \\

\paragraph{\underline{Definition of $\sE_{j+1}$}} Define $\fh_{j+1}$ by adjoining the letter $\ta$ to the word $\fh_j$. Thus, it follows from the definitions that 
\begin{equation}\label{algebraic word}
    r_{j+1} =  r_j^{1+\varepsilon_{\;\!\!\circ}}, \quad \#_{\btc}(j+1) = \#_{\btc}(j) \quad \textrm{and} \quad \#_{\bta}(j+1) = \#_{\bta}(j)+1.
\end{equation} 
 The next generation of cells is constructed from the varieties  which arise from the algebraic case in Theorem~\ref{partitioning theorem}. Fix $O_j \in \O_{j,\alg}$ so that there exists a transverse complete intersection $\bY_{\!j}$ of dimension $m-1$ and $\deg \bY_{\!j} \leq C_{\mathrm{alg}} d$ such that
\begin{equation*}
    \Big\|\sum_{T \in \T[O_j]} \chi_T\Big\|_{\BL{k,A_j}^p(O_j)}^p \,\lesssim\,  \log d\,\Big\|\sum_{T \in \T[O_j]} \chi_T\Big\|_{\BL{k,A_{j}}^p(O_j \cap N_{\;\!\!\delta}\bY_{\!j})}^p.
\end{equation*}
Let $\cB(O_j)$ be a  cover of $O_j \cap N_{\;\!\!\delta}\bY_{\!j}$ consisting of finitely-overlapping balls of radius $\max\{r_{j+1},\delta^{1 -\varepsilon_{\;\!\!\circ}}\}$. For each $B \in \cB(O_j)$ let $\T_{\!B}$ denote the family of $T \in \T[O_j]$ for which $T \cap B \cap N_{\;\!\!\delta}\bY_{\!j}  \neq \emptyset$. This set is partitioned into the subsets
\begin{equation*}
    \T_{\!B, \tang} := \big\{ T \in \T_{\!B} : \textrm{$T$ is tangent to $\bY_{\!j}$ on $B$} \big\}, \quad \T_{\!B, \trans} := \T_{\!B} \setminus \T_{\!B, \tang};
\end{equation*}
here the notion of tangency is that given in Definition~\ref{tangent definition}.

 By hypothesis, \texttt{[tang]} fails and, consequently, one may deduce that
\begin{equation}\label{algebraic 1}
    \sum_{O_j\in \O_{j,\mathrm{alg}}}\Big\|\sum_{T \in \T[O_j]} \chi_T\Big\|_{\BL{k,A_j}^p(O_j)}^p \lesssim\,\, \log d\!\! \sum_{\substack{O_j \in \O_{j,\mathrm{alg}}\\B \in \cB(O_j)}} \Big\|\sum_{T \in \T_{\!B, \trans}} \chi_T\Big\|_{\BL{k,A_{j+1}}^p(B_j)}^p
\end{equation}
where, for notational convenience, $B_j := B \cap N_{\;\!\!\delta}\bY_{\!j}$. Indeed, provided $C_{\textrm{\texttt{tang}}} > 0$ is sufficiently large,
\begin{align}\label{tangent bounds} \nonumber
   \sum_{O_j \in \O_{j,\alg}} \sum_{B \in \cB(O_j)}  \#\T_{\!B, \tang}  &\,\leq\,\, C_{\textrm{\texttt{tang}}}\delta^{-n\varepsilon_{\;\!\!\circ}}\!\!\sum_{O_j \in \O_j}  \#\T[O_j]; \\ 
  \max_{O_j \in \O_{j,\alg}} \max_{B \in \cB(O_j)}  \#\T_{\!B, \tang}  &\,\leq\,\, \max_{O_j \in \O_j}  \#\T[O_j].
\end{align}
Consequently, the failure of the stopping condition \texttt{[tang]} forces
\begin{equation*}
    \log d\!\!\sum_{O_j \in \O_j} \sum_{B \in \cB(O_j)} \Big\|\sum_{T \in \T_{\!B, \tang}} \!\!\!\!\chi_T\Big\|_{\BL{k,A_{j+1}}^p(B)}^p < \frac{1}{C_{\mathrm{tang}}}  \sum_{O_j \in \O_j} \Big\|\sum_{T \in \T[O_j]} \!\!\!\chi_T\Big\|_{\BL{k,A_j}^p(O_j)}^p 
\end{equation*}
(since the estimates in~\eqref{tangent bounds} show all other conditions for \texttt{[tang]} are met for $\mathcal{S}$, $\T[S]$ and $B[S]$ appropriately defined). On the other hand, by the triangle inequality for broad norms (Lemma~\ref{triangle inequality lemma}), using the fact that $A_{j+1} = A_j/2$, the left-hand side of~\eqref{algebraic 1} is dominated by
\begin{equation*}
 \log d\!\! \sum_{O_j \in \O_{j,\mathrm{alg}}} \sum_{B \in \cB(O_j)} \Big[\big\|\sum_{T \in \T_{\!B, \tang}}\!\!\chi_T\big\|_{\BL{k,A_{j+1}}^p(B_j)}^{p} + \big\|\sum_{T \in \T_{\!B, \trans}}\!\!\chi_T\big\|_{\BL{k,A_{j+1}}^p(B_j)}^p\Big].
\end{equation*}
For a suitable choice of constant $C_{\mathrm{tang}}$, combining the information in the two previous displays yields~\eqref{algebraic 1}.

For $O_j \in \O_{j,\alg}$ define 
\begin{equation*}
    \O_{j+1}(O_j) := \big\{ B \cap N_{\;\!\!\delta}\bY_{\!j} : B \in \cB(O_j) \big\}
\end{equation*}
and let $\T[O_{j+1}] := \T_{\!B, \trans}$ for $O_{j+1} = B \cap N_{\;\!\!\delta}\bY_{\!j} \in \O_{j+1}(O_j)$. The collection of cells $\O_{j+1}$ is then given by
\begin{equation*}
    \O_{j+1} := \bigcup_{O_j \in \O_{j, \alg}} \O_{j+1}(O_j). 
\end{equation*}
It remains to verify that the ensemble $\sE_{j+1}$ satisfies the desired properties. In view of this, it is useful to note that
\begin{equation}\label{algebriac coefficients}
C_{j}(d) = d^{-(n+\varepsilon_{\;\!\!\circ})}C_{j+1}(d),
\end{equation}
which follows directly from the definition of $C_{j}(d)$ and \eqref{algebraic word}.\\

%
%
%
\paragraph{\underline{Property I}} By combining~\eqref{algebraic 1} together with the various definitions one obtains 
\begin{equation*}
  \sum_{O_{j} \in \O_{j,\mathrm{alg}}} \Big\|\sum_{T \in \T[O_j]} \chi_T\Big\|_{\BL{k,A_j}^p(O_{j})}^p \,\lesssim\,\, \log d\!\!\! \sum_{O_{j+1} \in \O_{j+1}} \Big\|\sum_{T \in \T[O_{j+1}]} \chi_T\Big\|_{\BL{k,A_{j+1}}^p(O_{j+1})}^p.
\end{equation*}
Recalling~\eqref{algebraic dominant case} and~\eqref{algebriac coefficients}, if $c(d) := Cd^{-(n+\varepsilon_{\;\!\!\circ})}\log d$ for an appropriate choice of admissible constant $C$, then
\begin{equation*}
\Big\|\sum_{T \in \T} \chi_T\Big\|_{\BL{k,A}^p(B_{\;\!\!r_0})}^p \,\leq \,\,c(d)C_{j+1}(d)\!\!  \sum_{O_{j+1} \in \O_{j+1}}\Big\|\sum_{T \in \T[O_{j+1}]} \chi_T\Big\|_{\BL{k,A_{j+1}}^p(O_{j+1})}^p.
\end{equation*}
 Provided $d$ is sufficiently large, $c(d) \leq 1$ and one thereby deduces $(\mathrm{I})_{j+1}$. 
\\
\paragraph{\underline{Property II}} Fix $O_j \in \O_{j,\mathrm{alg}}$ and note that
\begin{equation}\label{algbraic case property II}
    \sum_{O_{j+1} \in \O_{j+1}(O_j)} \# \T[O_{j+1}] = \sum_{B \in \cB(O_j)}  \#\T_{\!B, \trans} 
\end{equation}
by the definition of $\T[O_{j+1}]$. To estimate the latter sum one may invoke the following algebraic-geometric result of Guth, which appears in Lemma 5.7 of~\cite{Guth2018}. 

\begin{lemma}[\cite{Guth2018}]\label{transversal intersection lemma}
Suppose $T$ is an infinite cylinder in $\R^n$ of radius $\delta$ and central axis $\ell$ and $\bY$ is a transverse complete intersection. For $\alpha > 0$ let
\begin{equation*}
    \bY_{\!> \alpha} := \big\{\, \by \in \bY \,:\, \angle(T_{\by}\bY, \ell) > \alpha \,\big\}.
\end{equation*}
The set $\bY_{\!> \alpha} \cap T$ is contained in a union of $O\big((\deg \bY)^n\big)$ balls of radius $\delta\alpha^{-1}$.
\end{lemma}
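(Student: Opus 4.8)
The plan is to show that $\bY_{>\alpha}\cap T$ splits into $O_n((\deg\bY)^n)$ connected pieces, each of diameter $O_n(\delta\alpha^{-1})$, and then to cover each piece by $O_n(1)$ balls of radius $\delta\alpha^{-1}$. To set up, write $D := \deg\bY$ and fix a transverse complete intersection representation $\bY = Z(P_1,\dots,P_{n-m})$, with each $\deg P_i\le D$ and $\bigwedge_{i}\nabla P_i\neq 0$ on $\bY$, so every point of $\bY$ is smooth with a classical tangent space. Let $v\in S^{n-1}$ be the direction of the axis $\ell$ and split $\R^n = v^\perp\oplus\R v$, writing $x = (x',x_n)$ with $x_n = x\cdot v$, chosen so that $T = \{\,x : |x'-a'| < \delta\,\}$ and $\ell = \{\,(a',t) : t\in\R\,\}$. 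One may assume $m\ge 1$ (if $m = 0$ then $\bY$ is a set of at most $D$ points and there is nothing to prove) and $0 < \alpha\le\tfrac14$ (for larger $\alpha$, replace $\bY_{>\alpha}$ by the smaller set $\bY_{>1/4}$ and apply the conclusion with $\alpha = \tfrac14$, as $\delta\alpha^{-1}$ only decreases), so that $\sin\alpha\gtrsim\alpha$ and $\cot\alpha\lesssim\alpha^{-1}$.

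\emph{Algebraic description of the angle condition.} Let $M(x)$ be the $n\times(n-m)$ matrix with columns $\nabla P_i(x)$. On $\bY$ the Gram matrix $M^{\mathrm T}M$ is invertible, and the orthogonal projection of $v$ onto the normal space $\mathrm{range}\,M(z)$ has squared length $v^{\mathrm T}M(M^{\mathrm T}M)^{-1}M^{\mathrm T}v = \sin^2\!\angle(v,T_z\bY)$. Clearing denominators with the adjugate, I would set $S(x) := \det(M^{\mathrm T}M)(x)\ge 0$ and $R(x) := v^{\mathrm T}M\,\mathrm{adj}(M^{\mathrm T}M)\,M^{\mathrm T}v$; these are polynomials of degree $\lesssim_n D$ with $R = S\sin^2\!\angle(v,T_x\bY)$ on $\bY$. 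Hence $\bY_{>\alpha}\cap T$ agrees, on $\bY$, with the semialgebraic set
\[
\big\{\,x\in\R^n : P_1(x) = \dots = P_{n-m}(x) = 0,\ R(x) > (\sin^2\!\alpha)\,S(x),\ |x'-a'|^2 < \delta^2\,\big\},
\]
cut out by $O(n)$ polynomials each of degree $\lesssim_n D$. The component count then follows from the Milnor--Thom bound (or Gromov's algebraic lemma, Lemma~\ref{Gromov}): such a set has $O_n(D^n)$ connected components.

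\emph{The diameter bound.} The key point is that on $\bY_{>\alpha}$ the projection $\Pi\colon x\mapsto x'$ onto $v^\perp$ restricts to an immersion of $\bY$ with a quantitative lower bound: $d\Pi|_{T_z\bY}$ has trivial kernel unless $v\in T_z\bY$ (i.e. unless $\angle(T_z\bY,v) = 0$), and $|d\Pi(u)| = |u'|\ge(\sin\alpha)|u|$ for every $u\in T_z\bY$. Thus along any path $\gamma$ in a component $C$ one has $\big|\tfrac{d}{ds}\big(\gamma(s)\cdot v\big)\big|\le(\cot\alpha)\big|\tfrac{d}{ds}\gamma'(s)\big|$, so the drift of $C$ along the axis is $\lesssim\cot\alpha$ times the length of $\gamma'$. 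Since $\gamma'$ stays in the $\delta$-ball $\Pi(T)$ and $C$ is semialgebraic of complexity $\lesssim_n D$, I would choose $\gamma$ with $\mathrm{len}(\gamma')\lesssim_n\delta$, using Gromov's algebraic lemma (after rescaling $\Pi(T)$ to unit size) together with a Bézout bound for the number of ``turning points'' of $x\cdot v$ on $C$ --- the solutions of $P_1 = \dots = P_{n-m} = 0$, $\nabla^{\bY}(x\cdot v) = 0$, an intersection of $n$ hypersurfaces of degree $\lesssim_n D$. This gives $\mathrm{diam}(C)\lesssim_n\delta\alpha^{-1}$, and covering each of the $O_n(D^n)$ components by $O_n(1)$ balls of radius $\delta\alpha^{-1}$ completes the proof.

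\emph{Main obstacle.} The algebraic description and the component count are routine once $R$ and $S$ are written down; the delicate part is the diameter bound. A connected piece of $\bY$ confined to the thin cylinder $T$ could a priori drift arbitrarily far along $\ell$ --- think of a high-degree curve zig-zagging repeatedly across $T$ --- and it is exactly the combination of the angle hypothesis $\angle(T_z\bY,v)>\alpha$ with the degree bound $\deg\bY\le D$ that rules this out. Turning this into the quantitative statement that the per-component drift is $O_n(\delta\alpha^{-1})$, while keeping the total number of components at $O_n((\deg\bY)^n)$, is the technical heart of the argument and is where the interplay between Bézout's theorem and Gromov's algebraic lemma is essential.
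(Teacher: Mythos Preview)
The paper does not give its own proof of this lemma; it is quoted from \cite{Guth2018} (Lemma~5.7 there) and invoked as a black box in the verification of Property~II in the algebraic-dominant case of \texttt{[alg 1]}. There is therefore nothing in the present paper to compare your argument against.

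On the merits: your semialgebraic description of $\bY_{>\alpha}\cap T$ via $R$ and $S$ is correct, as is the Milnor--Thom component count and the differential inequality $|u\cdot v|\le(\cot\alpha)|u'|$ for $u\in T_z\bY$. The gap is in the diameter bound. The assertion that each connected component $C$ has diameter $O_n(\delta\alpha^{-1})$ with a constant \emph{independent of} $D=\deg\bY$ is false in general: a degree-$D$ algebraic curve can wind on the order of $D$ times inside $T$ while satisfying the angle condition (an algebraic approximation to a helix of pitch $\sim\delta/\alpha$ does this), producing a single connected component of diameter $\sim D\delta/\alpha$. Your proposed route through Gromov's lemma does not rescue this --- after rescaling $\Pi(T)$ to unit size, $C$ remains unbounded in the $v$-direction, so Gromov does not hand you a path with $\mathrm{len}(\gamma')\lesssim_n\delta$; and the ``turning points'' of $x\cdot v$ on $\bY$ (where $v\perp T_z\bY$, i.e.\ where the angle equals $\pi/2$) lie \emph{inside} $\bY_{>\alpha}$ and do not partition $C$ into pieces of controlled size. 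Your closing paragraph is candid that this is the unresolved heart of the matter. A correct argument has to balance the number of pieces against their individual diameters rather than bound each factor separately; the covering by $O(D^n)$ balls does hold, but not via the decomposition you propose.
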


Since $T \cap B \cap N_{\;\!\!\delta}\bY \neq \emptyset$ by the definition of $\T_{\!B}$, a tube $T \in \T_{\!B}$ belongs to $\T_{\!B,\trans}$ if and only if the angle condition ii) from Definition~\ref{tangent definition} fails to be satisfied. Thus, given any $T \in \bigcup_{B \in \cB} \T_{\!B, \trans}$, it follows from the definitions that 
\begin{equation*}
    \angle(\mathrm{dir}(T), T_{\by} \bY) \,\gtrsim\, \frac{\delta}{r_{j+1}}
\end{equation*}
for some $\by \in \bY\cap 2B$ with $|\by - \bx| \lesssim \delta$ for some $\bx\in T$. This implies that 
\begin{equation*}
    N_{\;\!\!C\delta}T \cap 2B \cap \bY_{\!> \alpha_{j+1}} \neq \emptyset
\end{equation*}
where $\alpha_{j+1} \sim \delta/r_{j+1}$. Consequently, by Lemma~\ref{transversal intersection lemma}, any $T \in \bigcup_{B \in \cB(O_j)} \T_{\!B, \trans}$ lies in at most $O(d^n)$ of the sets~$\T_{\!B,\mathrm{trans}}$ and so
\begin{equation*}
    \sum_{B \in \cB(O_j)}  \#\T_{\!B, \trans} \,\lesssim\, d^n \#\T[O_j].
\end{equation*}
Combining this inequality with~\eqref{algbraic case property II} and summing over all $O_j \in \O_{j,\mathrm{alg}}$,
\begin{equation*}
    \sum_{O_{j+1} \in \O_{j+1}}  \# \T[O_{j+1}] \,\lesssim\, d^n \sum_{O_{j} \in \O_{j}}\#\T[O_j].
\end{equation*}
Applying $(\mathrm{II})_j$,~\eqref{algebraic word} and~\eqref{algebriac coefficients}, one concludes that
\begin{equation*}
    \sum_{O_{j+1} \in \O_{j+1}} \#\T [O_{j+1}] \,\lesssim\, d^{-\varepsilon_{\;\!\!\circ}}C_{j+1}(d)d^{\#_{\stc}(j+1)} \#\T.
\end{equation*}
Provided $d$ is chosen to be sufficiently large to absorb the implicit constant, one deduces $(\mathrm{II})_{j+1}$. 
\\

\paragraph{\underline{Property III}} Fix $O_j \in \O_{j,\mathrm{alg}}$ and $O_{j+1} \in \O_{j+1}(O_j)$. By definition, $\T[O_{j+1}] \subseteq \T[O_{j}]$ and so \begin{equation*}
   \#\T[O_{j+1}] \,\leq\, \#\T[O_{j}] \,\leq\, C_{j+1}(d) d^{-\#_{\stc}(j+1)(m-1)}\#\T,
\end{equation*}
by $(\mathrm{III})_j$ and~\eqref{algebraic word}.

\subsection*{The second algorithm} The algorithm \texttt{[alg 1]} is now applied repeatedly in order to arrive at a final decomposition of the $k$-broad norm. This process forms part of a second algorithm, referred to as \texttt{[alg 2]}.

Throughout this section  let $p_{\ell}$, with $k \leq \ell \leq n$, denote some choice of Lebesgue exponents satisfying $p_k \geq p_{k+1} \geq \dots \geq p_n =: p \geq 1.$ The numbers $0 \leq  \Theta_{\ell} \leq 1$ are then defined in terms of the $p_{\ell}$ by
\begin{equation*}
   \Theta_\ell :=\Big(1-\frac{1}{p_\ell}\Big)^{-1}\Big(1-\frac{1}{p}\Big)
\end{equation*}
so that $\Theta_n=1$. Also fix $0 < \varepsilon_{\;\!\!\circ}  \ll \varepsilon \ll 1$ as in the previous section. 

There are two stages to \texttt{[alg 2]}, which can roughly be described as follows:
\begin{itemize}
    \item \textbf{The recursive stage}: $\sum_{T \in \T}\chi_T$ is repeatedly decomposed into pieces with favourable tangency properties with respect to varieties of progressively lower dimension.  
    \item \textbf{The final stage}: $\sum_{T \in \T}\chi_T$ is further decomposed into very small scale pieces.
\end{itemize}
To begin, the recursive stage of \texttt{[alg 2]} is described. \\

\paragraph{\underline{\texttt{Input}}} \texttt{[alg 2]} will take as its input:
\begin{itemize}
    \item A choice of small scale $0 < \delta  \ll 1$.
    \item A large integer $A \in \N$. 
    \item A family of $\delta$-tubes $\T$ which are non-degenerate in the sense that
\begin{equation}\label{non degeneracy hypothesis}
\Big\|\sum_{T \in \T} \chi_T\Big\|_{\BL{k,A}^{p}(\R^n)} \neq 0.
\end{equation}
\end{itemize}
Note that the process applies to essentially arbitrary families of $\delta$-tubes (in particular, the direction-separated hypothesis does not appear at this stage).\\

\paragraph{\underline{\texttt{Output}}} The $(n+1-\ell)$th step of the recursion will produce:
\begin{itemize}
    \item An $(n+1-\ell)$-tuple of:
    \begin{itemize}
        \item scales $\vec{\delta}_{\ell} = (\delta_n, \dots, \delta_{\ell})$ satisfying $\delta^{\varepsilon_{\;\!\!\circ}} = \delta_n >  \dots > \delta_{\ell} \ge \delta^{1 - \varepsilon_{\;\!\!\circ}}$;
        \item large and (in general) non-admissible parameters $\vec{D}_{\ell} = (D_n, \dots, D_{\ell})$;
        \item integers $\vec{A} = (A_n, \dots, A_{\ell})$ satisfying $A = A_n > A_{n-1} > \dots > A_{\ell}$.
    \end{itemize}
    Each of these $(n+1-\ell)$-tuples is formed by adjoining a component to the corresponding $(n-\ell)$-tuple from the previous stage.
    \item A family $\vec{\Sc}_{\ell}$ of $(n+1-\ell)$-tuples of transverse complete intersections $\vec{S}_{\ell} = (S_n, \dots, S_{\ell})$ satisfying $\dim S_i = i$ and $\deg S_i = O(1)$ for $\ell \leq i \leq n$. 
    \item An assignment of a $\delta_{\ell}$-ball $B[\vec{S}_{\ell}]$ and a subfamily $\T[\vec{S}_{\ell}]$ of $\delta$-tubes to each $\vec{S}_{\ell} \in \vec{\Sc}_{\ell}$ with the property that the tubes $T \in \T[\vec{S}_{\ell}]$ are tangent to $S_{\ell}$ in $B[\vec{S}_{\ell}]$ (here $S_{\ell}$ is the final component of $\vec{S}_{\ell}$).
\end{itemize}
This data is chosen so that the following properties hold:

\begin{notation} Throughout this section a large number of harmless $\delta^{-\varepsilon_{\;\!\!\circ}}$-factors appear in the inequalities. For notational convenience, given $A, B \geq 0$ let $A \lessapprox B$ or $B \gtrapprox A$ denote $A \lesssim \delta^{-c\varepsilon_{\;\!\!\circ}} B$ for some $c>0$ depending only on $n$ and $p$.
\end{notation}

\paragraph{\underline{Property 1}} The inequality
\begin{equation}\label{l step}
      \Big\|\sum_{T \in \T} \chi_T\Big\|_{\BL{k,A}^p(\R^n)} \lessapprox C(\vec{D}_{\ell}; \vec{\delta}_{\ell}) [\delta^n \#\T]^{1-\Theta_{\ell}} \Big( \sum_{\vec{S}_{\ell} \in \vec{\Sc}_{\ell}} \Big\|\sum_{T \in \T[\vec{S}_{\ell}]} \!\!\!\chi_T\Big\|_{\BL{k,A_{\ell}}^{p_{\ell}}(B[\vec{S}_{\ell}])}^{p_{\ell}}\Big)^{\frac{\Theta_{\ell}}{p_{\ell}}}
\end{equation}
holds for 
\begin{equation*}
    C(\vec{D}_{\ell}; \vec{\delta}_{\ell}) := \prod_{i=\ell}^{n-1} \Big(\frac{\delta_i}{\delta}\Big)^{\Theta_{i+1}-\Theta_{i}}D_i^{(1 + \varepsilon_{\;\!\!\circ})(\Theta_{i+1} - \Theta_{\ell}) + \varepsilon_{\;\!\!\circ}}.
\end{equation*}
\paragraph{\underline{Property 2}}  For $\ell \leq n-1$, the inequality
\begin{equation*}
    \sum_{\vec{S}_{\ell} \in \vec{\Sc}_{\ell}} \# \T[\vec{S}_{\ell}] \,\lessapprox\,\,D_{\ell}^{1 + \varepsilon_{\;\!\!\circ}} \!\!\!\!\sum_{\vec{S}_{\ell+1} \in \vec{\Sc}_{\ell+1}} \# \T[\vec{S}_{\ell+1}] 
\end{equation*}
holds.\\

\paragraph{\underline{Property 3}}    For $\ell \leq n-1$, the inequality
\begin{equation*}
    \max_{\vec{S}_{\ell} \in \vec{\Sc}_{\ell}} \# \T[\vec{S}_{\ell}] \,\lessapprox\,\,D_{\ell}^{-\ell + \varepsilon_{\;\!\!\circ}} \!\!\!\!\max_{\vec{S}_{\ell+1} \in \vec{\Sc}_{\ell+1}} \# \T[\vec{S}_{\ell+1}]
\end{equation*}
holds.\\

By the inclusion property \eqref{inclusion}, the broad norms over $B[\vec{S}_{\ell}]$ on the right-hand side of \eqref{l step} could be replaced by broad norms over $4\delta$-neighbourhoods of $S_{\ell}$.\\

\paragraph{\underline{\texttt{First step}}} Vacuously, the tubes belonging to $\T$ are tangent to the $n$-dimensional variety $\R^n$. Let $\mathcal{B}_{\circ}$ denote a collection of finitely-overlapping balls of radius $\delta^{\varepsilon_{\circ}}$ which cover $\bigcup_{T \in \T} T$ and define
\begin{itemize}
        \item $\delta_n := \delta^{\varepsilon_{\;\!\!\circ}}$; $D_n := 1$ and $A_n:= A$;
        \item $\Sc_n$ is the collection consisting of repeated copies of the 1-tuple $(\R^n)$, with one copy for each ball in $\mathcal{B}_{\circ}$;
        \item For each $\vec{S}_n \in \Sc_n$ assign a ball $B[\vec{S}_n] \in \mathcal{B}_{\circ}$ and let 
        \begin{equation*}
            \T[\vec{S}_n] := \big\{T \in \T : T \cap B[\vec{S_n}] \neq \emptyset\big\}.
        \end{equation*}
\end{itemize}
By a straightforward orthogonality argument (identical to that used to establish the base case in the proof of Proposition \ref{Bourgain--Guth}), Property 1 can be shown to hold with $C(\vec{D}_{n}; \vec{\delta}_{n})=1$ and $\Theta_n=1$.\\

\paragraph{\underline{\texttt{($n+2-\ell$)th step}}} Let $\ell \geq 1$ and suppose that the recursive algorithm has ran through $n+1-\ell$ steps. Since each family $\T[\vec{S}_{\ell}]$ consists of $\delta$-tubes which are tangent to $S_{\ell}$ on $B[\vec{S}_{\ell}]$, one may apply \texttt{[alg 1]} to bound the $k$-broad norm 
\begin{equation*}
    \Big\|\sum_{T \in \T[\vec{S}_{\ell}]} \chi_T\Big\|_{\BL{k,A_{\ell}}^{p_{\ell}}(B[\vec{S}_{\ell}])}.
\end{equation*} 
One of two things can happen: either \texttt{[alg 1]} terminates due to the stopping condition \texttt{[tiny]} or it terminates due to the stopping condition \texttt{[tang]}. The current recursive process terminates if the contributions from terms of the former type dominate:\\

\paragraph{\underline{\texttt{Stopping condition}}} The recursive stage of \texttt{[alg 2]} has a single stopping condition, which is denoted by \texttt{[tiny-dom]}. 
\begin{itemize}
    \item[\texttt{Stop:[tiny-dom]}] Suppose that the inequality
  \begin{equation}\label{tiny case 1}
      \sum_{\vec{S}_{\ell} \in \vec{\Sc}_{\ell}} \Big\|\sum_{T \in \T[\vec{S}_{\ell}]} \chi_T\Big\|_{\BL{k,A_{\ell}}^{p_{\ell}}(B[\vec{S}_{\ell}])}^{p_{\ell}} \,\leq\, 2 \sum_{\vec{S}_{\ell} \in \vec{\Sc}_{\ell,\textrm{\texttt{tiny}}}}  \Big\|\sum_{T \in \T[\vec{S}_\ell]} \chi_T \Big\|_{\BL{k,A_{\ell}}^{p_{\ell}}(B[\vec{S}_{\ell}])}^{p_{\ell}}
\end{equation}
holds, where the right-hand summation is restricted to those $S_{\ell} \in \vec{\Sc}_{\ell}$ for which \texttt{[alg 1]} terminates owing to the stopping condition \texttt{[tiny]}. Then \texttt{[alg 2]} terminates.
\end{itemize}

Assume that the condition \texttt{[tiny-dom]} is not met. Necessarily,
  \begin{equation}\label{tangent case 1}
      \sum_{\vec{S}_{\ell} \in \vec{\Sc}_{\ell}}\Big\|\sum_{T \in \T[\vec{S}_{\ell}]} \chi_T\Big\|_{\BL{k,A_{\ell}}^{p_{\ell}}(B[\vec{S}_{\ell}])}^{p_{\ell}} \leq \,2 \sum_{\vec{S}_{\ell} \in \vec{\Sc}_{\ell,\textrm{\texttt{tang}}}} \Big\|\sum_{T \in \T[\vec{S}_{\ell}]} \chi_T\Big\|_{\BL{k,A_{\ell}}^{p_{\ell}}(B[\vec{S}_{\ell}])}^{p_{\ell}},
\end{equation}
where the right-hand summation is restricted to those $S_{\ell} \in \vec{\Sc}_{\ell}$ for which \texttt{[alg 1]} does not terminate owing to \texttt{[tiny]} and therefore terminates owing to \texttt{[tang]}. Consequently, for each $\vec{S}_{\ell} \in \vec{\Sc}_{\ell,\textrm{\texttt{tang}}}$ the inequalities
  \begin{equation}\label{recursive step property 1}
      \Big\|\sum_{T \in \T[\vec{S}_{\ell}]} \chi_T\Big\|_{\BL{k,A_{\ell}}^{p_{\ell}}(B[\vec{S}_{\ell}])}^{p_{\ell}} \,\lessapprox  D_{\ell - 1}^{\varepsilon_{\;\!\!\circ}} \sum_{S_{\ell-1} \in \Sc_{\ell-1}[\vec{S}_{\ell}]} \Big\|\sum_{T \in \T[\vec{S}_{\ell-1}]}\chi_T\Big\|_{\BL{k,2A_{\ell-1}}^{p_{\ell}}(B[\vec{S}_{\ell-1}])}^{p_{\ell}},
\end{equation}
and
\begin{align}
\label{recursive step property 2}
    \sum_{S_{\ell-1} \in \Sc_{\ell-1}[\vec{S}_{\ell}]}  \#\T[S_{\ell-1}] &\,\lessapprox\, D_{\ell-1}^{1 + \varepsilon_{\;\!\!\circ}} \#\T[S_{\ell}]; \\
\label{recursive step property 3}
\max_{S_{\ell-1} \in \Sc_{\ell-1}[\vec{S}_{\ell}]} \#\T[S_{\ell-1}] &\,\lessapprox\,  D_{\ell-1}^{-(\ell-1) + \varepsilon_{\;\!\!\circ}} \#\T[S_{\ell}]
\end{align}
hold for some choice of:
\begin{itemize}
    \item Scale $\delta_{\ell-1}$ satisfying $\delta_{\ell} > \delta_{\ell-1} \geq \delta^{1-\varepsilon_{\;\!\!\circ}}$; non-admissible number $D_{\ell-1}$ and large integer $A_{\ell-1}$ satisfying $A_{\ell-1} \sim A_{\ell}$;
    \item Family $\Sc_{\ell-1}[\vec{S}_{\ell}]$ of $(\ell-1)$-dimensional transverse complete intersections of degree $O(1)$;
    \item Assignment of a subfamily $\T[\vec{S}_{\ell-1}] = \T[\vec{S}_{\ell}][S_{\ell-1}]$ of $\delta$-tubes for every $S_{\ell-1} \in \Sc_{\ell-1}[\vec{S}_{\ell}]$ such that each $T \in \T[\vec{S}_{\ell-1}]$ is tangent to $S_{\ell-1}$ on $B[\vec{S}_{\ell-1}]$.
\end{itemize}
Each inequality~\eqref{recursive step property 1},~\eqref{recursive step property 2} and~\eqref{recursive step property 3} is obtained by combining the definition of the stopping condition \texttt{[tang]} with Properties I, II and III from \texttt{[alg 1]}, respectively. Indeed, we take $$r_0 := \delta_{\ell},\quad \delta_{\ell-1} := \max\{r_{\!J}^{1+\varepsilon_{\;\!\!\circ}},\delta^{1-\varepsilon_{\;\!\!\circ}}\},\quad \text{and}\quad D_{\ell-1} := d^{\#_{\stc}(J)},$$ using the notation from \texttt{[alg 1]}.

The $\delta_{\ell-1}$, $D_{\ell-1}$ and $A_{\ell-1}$ can depend on the choice of $\vec{S}_{\ell}$, but this dependence can be essentially removed by pigeonholing. In particular,  $\#_{\stc}(J)$ depends on $\vec{S}_{\ell}$, but satisfies ${\#_{\stc}(J)}= O(\log \delta^{-1})$. Thus, since there are only logarithmically many possible different values, one may find a subset of the $\Sc_{\ell,\textrm{\texttt{tang}}}$ over which the $D_{\ell-1}$ all have a common value and, moreover, the inequality~\eqref{tiny case 1} still holds except that the constant $1/2$ is now replaced with, say,~$\delta^{-\varepsilon_{\;\!\!\circ}}$. A brief inspection of \texttt{[alg 1]} shows that both $\delta_{\ell-1}$ and $A_{\ell-1}$ are determined by~$D_{\ell-1}$ and so the desired uniformity is immediately inherited by these parameters. 

Letting $\vec{\Sc}_{\ell-1}$ denote the structured set 
\begin{equation*}
    \vec{\Sc}_{\ell-1} := \big\{ (\vec{S}_{\ell}, S_{\ell-1}) : \vec{S}_{\ell} \in \vec{\Sc}_{\ell,\textrm{\texttt{tang}}} \textrm{ and } S_{\ell-1} \in \Sc_{\ell-1}[\vec{S}_{\ell}] \big\},
\end{equation*}
where $\vec{\Sc}_{\ell,\textrm{\texttt{tang}}}$ is understood to be the refined collection described in the previous paragraph, it remains to verify that the desired properties hold for the newly constructed data. Property 2 follows immediately from~\eqref{recursive step property 2} and Property 3 from~\eqref{recursive step property 3}, so it remains only to verify Property 1.

By combining the inequality~\eqref{l step} from the previous stage of the algorithm with~\eqref{tangent case 1} and~\eqref{recursive step property 1}, one deduces that
   \begin{equation*}
    \Big\|\sum_{T \in \T} \chi_T\Big\|_{\BL{k,A}^p(\R^n)}  \,\lessapprox\,  D_{\ell - 1}^{\varepsilon_{\;\!\!\circ}} C(\vec{D}_{\ell};\vec{\delta}_{\ell}) [\delta^n\#\T]^{1-\Theta_{\ell}} \Big\|\sum_{T \in \T[\vec{S}_{\ell-1}]} \chi_T\Big\|_{\ell^{p_{\ell}}\BL{k,2A_{\ell-1}}^{p_{\ell}}(\vec{\Sc}_{\ell-1})}^{\Theta_{\ell}}
\end{equation*}
where, for any $1 \leq q < \infty$ and $M \in \N$, we write
\begin{equation*}
    \Big\|\sum_{T \in \T[\vec{S}_{\ell-1}]} \chi_T\Big\|_{\ell^{q}\BL{k,M}^{q}(\vec{\Sc}_{\ell-1})} := \Bigg(\sum_{\vec{S}_{\ell-1}\in \vec{\Sc}_{\ell-1}}\!\Big\|\sum_{T \in \T[\vec{S}_{\ell-1}]} \chi_T\Big\|_{\BL{k,M}^{q}(B[\vec{S}_{\ell - 1}])}^{q}\Bigg)^{1/q}.
\end{equation*}
Taking $q = p_{\ell}$ and $M = 2A_{\ell-1}$, the logarithmic convexity inequality (Lemma~\ref{logarithmic convexity inequality lemma}) dominates the preceding expression by
\begin{equation*}
  \Big\|\sum_{T \in \T[\vec{S}_{\ell-1}]} \chi_T\Big\|_{\ell^{1}\BL{k,A_{\ell-1}}^{1}(\vec{\Sc}_{\ell-1})}^{1-\Theta_{\ell-1}/\Theta_\ell}\,  \Big\|\sum_{T \in \T[\vec{S}_{\ell-1}]} \chi_T\Big\|_{\ell^{p_{\ell-1}}\BL{k,A_{\ell-1}}^{p_{\ell-1}}(\vec{\Sc}_{\ell-1})}^{\Theta_{\ell-1}/\Theta_{\ell}}.
\end{equation*}
Observe that, trivially, one has
\begin{equation*}
 \Big\|\sum_{T \in \T[\vec{S}_{\ell-1}]} \chi_T\Big\|_{\ell^{1}\BL{k,A_{\ell-1}}^{1}(\vec{\Sc}_{\ell-1})}\, \lesssim\,\,\Big(\frac{\delta_{\ell-1}}{\delta}\Big) \delta^n\!\!\! \sum_{\vec{S}_{\ell-1} \in \vec{\Sc}_{\ell-1}} \# \T[\vec{S}_{\ell-1}].
\end{equation*}
and, by Property 2 for the tube families $\{\T[\vec{S}_{i}] : \vec{S}_i \in \vec{\Sc}_i\}$ for $\ell - 1 \leq i \leq n-1$, it follows that
\begin{equation*}
 \Big\|\sum_{T \in \T[\vec{S}_{\ell-1}]} \chi_T\Big\|_{\ell^{1}\BL{k,A_{\ell-1}}^{1}(\vec{\Sc}_{\ell-1})} \,\lesssim\, \Big(\frac{\delta_{\ell-1}}{\delta}\Big)  \Big( \prod_{i=\ell-1}^{n-1}D_{i}^{1 + \varepsilon_{\;\!\!\circ}}\Big) \delta^n \#\T.
\end{equation*}
One may readily verify that
\begin{equation*}
    D_{\ell - 1}^{\varepsilon_{\;\!\!\circ}} C(\vec{D}_{\ell}; \vec{\delta}_{\ell})\cdot  \Big( \frac{\delta_{\ell-1}}{\delta} \prod_{i=\ell-1}^{n-1}D_{i}^{1 + \varepsilon_{\;\!\!\circ}}\Big)^{\Theta_\ell-\Theta_{\ell-1}} = C(\vec{D}_{\ell-1};\vec{\delta}_{\ell-1})
\end{equation*}
and so, combining the above estimates, 
   \begin{equation*}
     \Big\|\sum_{T \in \T} \chi_T\Big\|_{\BL{k,A}^p(\R^n)} \,\lessapprox\, C(\vec{D}_{\ell-1};\vec{\delta}_{\ell-1}) [
      \delta^n\#\T]^{1-\Theta_{\ell-1}} \Big\|\!\!\sum_{T \in \T[\vec{S}_{\ell-1}]} \!\! \chi_T  \Big\|_{\ell^{p_{\ell-1}}\BL{k,A_{\ell-1}}^{p_{\ell-1}}(\vec{\Sc}_{\ell-1})}^{\Theta_{\ell-1}},
\end{equation*}
which is Property 1.
\\

\paragraph{\underline{\texttt{The final stage}}} If the algorithm has not stopped by the $k$th step, then it necessarily terminates at the $k$th step. Indeed, otherwise~\eqref{l step} would hold for $\ell = k-1$ and families $\T[\vec{S}_{{k-1}}]$ of $\delta_{k-1}$-tubes which are tangent to some transverse complete intersection of dimension $k-1$. By the vanishing property of the $k$-broad norms as described in Lemma~\ref{vanishing lemma}, one would then have
\begin{equation*}
    \Big\|\sum_{T \in \T[\vec{S}_{k-1}]} \chi_T\Big\|_{\BL{k,A_{k-1}}^{p_{k-1}}(B[\vec{S}_{k-1}] )} = 0,
\end{equation*}
which, by \eqref{l step}, would contradict the non-degeneracy hypothesis~\eqref{non degeneracy hypothesis}.

Suppose the recursive process terminates at step $m$, so that $m \geq k$. For each $\vec{S}_m \in \vec{\Sc}_{m,\textrm{\texttt{tiny}}}$ let $\O[\vec{S}_m]$ denote the final collection of cells output by \texttt{[alg 1]} (that is, the collection denoted by $\O_{\!J}$ in the notation of the previous subsection) when applied to estimate the broad norm $\|\sum_{T \in \T[\vec{S}_m]}\chi_T\|_{\BL{k,A_m}^{p_m}(B[\vec{S}_m])}$. 
By Properties I, II and III of \texttt{[alg 1]} one has 
   \begin{equation*}
      \Big\|\sum_{T \in \T[\vec{S}_m]}\chi_T\Big\|_{\BL{k,A_m}^{p_m}(B[\vec{S}_m])}^{p_m} \,\lesssim \sum_{O \in \O[\vec{S}_m]}  \Big\|\sum_{T \in \T[O]} \chi_T \Big\|_{\BL{k,A_{m-1}}^{p_m}(O)}^{p_m},
\end{equation*}
for some $A_{m-1} \sim A_m$ where the families $\T[O]$ satisfy
\begin{equation}\label{tiny property 2}
    \sum_{O \in \O[\vec{S}_m]} \#\T[O] \,\lesssim\, D_{m-1}^{1+ \varepsilon_{\circ}}  \#\T[\vec{S}_m]
    \end{equation}
and
\begin{equation}\label{tiny property 3}
    \max_{O \in \O[\vec{S}_m]} \#\T[O] \,\lesssim\, D_{m-1}^{-(m-1) + \varepsilon_{\circ}} \#\T[\vec{S}_m]
\end{equation}
for $D_{m-1}$ a large and (in general) non-admissible parameter. Once again, by pigeonholing, one may pass to a subcollection of $\Sc_{m, \textrm{\texttt{tiny}}}$ and thereby assume that the $D_{m-1}$ (and also the $A_{m-1}$) all share a common value. 

If $\O$ denotes the union of the $\O[\vec{S}_m]$ over all $\vec{S}_m$ belonging to subcollection of $\Sc_{m, \textrm{\texttt{tiny}}}$ described above, then \texttt{[alg 2]} outputs the following inequality.
\begin{first key estimate} 
   \begin{equation*}
     \Big\|\sum_{T \in \T} \chi_T\Big\|_{\BL{k,A}^p(\R^n)} \lessapprox\,  C(\vec{D}_m; \vec{\delta}_m) [\delta^n \#\T]^{1-\Theta_m}  \Bigg(\sum_{O\in\O}\Big\|\sum_{T \in \T[O]} \chi_T\Big\|^{p_m}_{\BL{k,A_{m-1}}(O)}\Bigg)^{\frac{\Theta_m}{p_m}}\!\!.
\end{equation*}
\end{first key estimate}

\section{Proof of Theorem~\ref{main theorem}}\label{proof section}

Henceforth, fix $\T$ to be a direction-separated family of $\delta$-tubes in $\R^n$. Without loss of generality, we may assume that~$\T$ satisfies the non-degeneracy hypothesis~\eqref{non degeneracy hypothesis}. The algorithms described in the previous section can be applied to this tube family, leading to the final decomposition of the broad norm described in the first key estimate. One therefore wishes to show, using the direction-separated hypothesis, that the quantity on the right-hand side of the first key estimate can be effectively bounded, provided that the exponents $p_k, \dots, p_n$ are suitably chosen. 

Since each $O \in \O$ is contained in a ball of radius at most $\delta^{1-\varepsilon_{\;\!\!\circ}}$, trivially one may bound
\begin{equation*}
    \Big\|\sum_{T \in \T[O]} \chi_T\Big\|_{\BL{k,A_{m-1}}^{p_m}(O)}^{p_m} \,\lessapprox\, \delta^{n} \big(\#\T[O]\big)^{p_m}.
\end{equation*}
Recalling that 
$
  \Theta_{m}(1-\frac{1}{p_{m}}) = 1-\frac{1}{p},
$ this yields
\begin{equation*}
\Bigg(\sum_{O\in\O}\Big\|\sum_{T \in \T[O]} \chi_T\Big\|^{p_m}_{\BL{k,A_{m-1}}^{p_m}(O)}\Bigg)^{\frac{\Theta_m}{p_m}}\lessapprox\,   \big(\max_{O \in \O}\#\T[O]\big)^{1-\frac{1}{p}}\Big( \delta^{n}\sum_{O \in \O}  \#\T[O]\Big)^{\frac{\Theta_m}{p_m}}.
\end{equation*}
Now \eqref{tiny property 2} and repeated application of Property 2 from \texttt{[alg 2]} imply 
\begin{equation*}
 \sum_{O \in \O} \#\T[O] \,\lessapprox\, \Big(\prod_{i = m-1}^{n-1} D_{i}^{1 + \varepsilon_{\;\!\!\circ}}\Big)\#\T.  
\end{equation*}
 Combining this with the first key estimate and the definition of $C(\vec{D}_m;\vec{\delta}_m)$, one concludes that 
  \begin{equation}\label{reduction to max bound}
    \Big\|\sum_{T \in \T} \chi_T\Big\|_{\BL{k,A}^{p}(\R^n)}  \,\lessapprox\,  \mathbf{C}(\vec{D}; \vec{\delta}\,)  \big(\max_{O \in \O}\#\T[O]\big)^{1 - \frac{1}{p}} \Big(  \delta\sum_{T \in \T} |T| \Big)^{\frac{1}{p}}
\end{equation}
where, taking $\delta_{m-1} := \delta$, the constant takes the form
\begin{equation*}
    \mathbf{C}(\vec{D}; \vec{\delta}\,) := \prod_{i = m-1}^{n-1} \Big(\frac{\delta_{i}}{\delta}\Big)^{\Theta_{i+1}-\Theta_{i}}D_{i}^{\Theta_{i+1} - (1 - \frac{1}{p}) + O(\varepsilon_{\;\!\!\circ})}.
\end{equation*}

In order to bound the maximum appearing on the right-hand side of~\eqref{reduction to max bound}, by~\eqref{tiny property 3} and repeated application of Property 3 of \texttt{[alg 2]}, it follows that 
\begin{equation*}
\max_{O \in \O} \#\T[O] \,\lessapprox\, \Big( \prod_{i=m-1}^{\ell-1}D_{i}^{-i + \varepsilon_{\;\!\!\circ}}\Big) \max_{\vec{S}_{\ell} \in \vec{\Sc}_{\ell}}\#\T[\vec{S}_{\ell}]
\end{equation*}
whenever $m \leq \ell \leq n$. Recall, for each tube family $\T[\vec{S}_{\ell}]$ produced by \texttt{[alg 2]} and each $\ell \leq i \leq n-1$ there exists a $\delta_i$-ball $B_{\delta_{i}} := B[\vec{S}_{i}]$ such that every $\delta$-tube $T \in \T[\vec{S}_{\ell}]$ is tangent to~$S_{i}$ in~$B_{\;\!\!\delta_{i}}$; in particular, 
\begin{equation*}
    T \cap B_{\;\!\!\delta_i} \cap N_{\;\!\!\delta}S_i \neq \emptyset \quad \textrm{and} \quad T \cap 2 B_{\;\!\!\delta_i} \subseteq N_{\;\!\!4\delta} S_i \qquad \textrm{for $\ell \leq i \leq n-1$}.
\end{equation*}
Here $S_i$ is a transverse complete intersection of dimension $i$ and $\deg S_i$ depends only on the admissible parameters $n$, $p$ and $\varepsilon$. Thus, Theorem~\ref{mainThm} implies that
\begin{equation*}
    \#\T[\vec{S}_{\ell}] \leq \# \bigcap_{i = \ell}^{n-1}\Big\{ T \in \T : |T \cap 2B_{\;\!\!\delta_i} \cap N_{\;\!\!4\delta} S_i| \geq 2 \delta_i |T| \Big\} \lessapprox  \delta^{-(n-1)}\prod_{i=\ell}^{n-1} \Big( \frac{\delta_i}{\delta}\Big)^{-1},
\end{equation*}
where the first inequality follows from elementary geometric considerations. Combining these observations,
\begin{equation*}
    \max_{O \in \O}\#\T[O] \,\lessapprox\, \Big( \prod_{i = m-1}^{\ell-1}D_{i}^{-i + \varepsilon_{\;\!\!\circ}}\Big)\delta^{-(n-1)}\prod_{i=\ell}^{n-1} \Big( \frac{\delta_i}{\delta}\Big)^{-1}.
\end{equation*}
for all $m \leq \ell \leq n$. Finally, these $n-m+1$ different estimates can be combined into a single inequality by taking a weighted geometric mean, yielding:

\begin{second key estimate} Let $0 \leq \gamma_m, \dots, \gamma_n \leq 1$ satisfy $\sum_{j = m}^{n} \gamma_{j} = 1$. Then
\begin{equation*}
    \max_{O \in \O} \#\T[O] \,\lessapprox\, \Big(\prod_{i=m-1}^{n-1}\Big(\frac{\delta_i}{\delta}\Big)^{-\sum_{j=m}^i\gamma_{j}} D_{i}^{-i(1-\sum_{j=m}^{i} \gamma_j) + O(\varepsilon_{\;\!\!\circ})} \Big) \delta^{-(n-1)}.
\end{equation*}
\end{second key estimate} 
When $i = m-1$ the $(\delta_i/\delta)^{- \sum_{j=m}^i\gamma_{j}}$ factor is understood to be equal to 1.

Substituting the second key estimate into~\eqref{reduction to max bound}, one obtains
  \begin{equation*}
      \Big\|\sum_{T \in \T} \chi_T\Big\|_{\BL{k,A}^{p}(\R^n)} \,\lessapprox\,  \Big(\prod_{i = m-1}^{n-1} \Big(\frac{\delta_{i}}{\delta}\Big)^{X_{i}}D_{i}^{Y_{i} + O(\varepsilon_{\;\!\!\circ})}\Big) \delta^{-(n-1-\frac{n}{p})}
      \Big(\sum_{T \in \T} |T| \Big)^{\frac{1}{p}} 
\end{equation*}
where
\begin{align*}
    X_{i}&:= \Theta_{i+1}-\Theta_{i}- \Big(\sum_{j=m}^i\gamma_{j}\Big)\Big(1-\frac{1}{p}\Big); \\
    Y_{i}&:= \Theta_{i+1} - \Big(1+i\big(1-\sum_{j=m}^{i} \gamma_j\big)\Big)\Big(1-\frac{1}{p}\Big).
\end{align*}
One now chooses the various exponents so that $X_{i}, Y_{i} = 0$ for all $m \leq i \leq n-1$ and $Y_{m-1} = 0$. This ensures that the $(\delta_i/\delta)^{X_i}$ and $D_i^{Y_i}$ factors in the above expression are admissible but does not allow one to control the $D_i^{O(\varepsilon_{\;\!\!\circ})}$ factors, which may still be non-admissible. To deal with the $D_i^{O(\varepsilon_{\;\!\!\circ})}$ one may perturb the~$p$ exponent which results under the conditions $X_{i}, Y_{i} = 0$, so that $Y_{i}$ becomes negative, and then choose $\varepsilon_{\;\!\!\circ}$ sufficiently small depending on the choice of perturbation. This yields an open range of $k$-broad estimates, which can then be trivially extended to a closed range via interpolation through logarithmic convexity (the interpolation argument relies on the fact that one is permitted an $\delta^{-\varepsilon}$-loss in the constants in the $k$-broad inequalities).

The condition $X_{i} = 0$ is equivalent to
\begin{equation}\label{our r constraint}
\Big(1 - \frac{1}{p_{i+1}}\Big)^{-1}-\Big(1 - \frac{1}{p_{i}}\Big)^{-1} =  \sum_{j=m}^i\gamma_{j}
\end{equation}
whilst the condition $Y_{i-1} = 0$ is equivalent to 
\begin{equation}\label{our D constraint}
    \Big(1 - \frac{1}{p_{i}}\Big)^{-1} = i - (i-1)\sum_{j=m}^{i-1} \gamma_j.
\end{equation}
Choose $p_m := \frac{m}{m -1}$ so that~\eqref{our D constraint} holds in the $i = m$ case. The remaining~$p_{i}$ are then defined in terms of the $\gamma_j$ by the equation
\begin{equation}\label{linear system 1} \Big(1 - \frac{1}{p_{i}}\Big)^{-1}=m + \sum_{j=m}^{i-1}(i-j)\gamma_j
\end{equation} 
so that each of the $n-m$ constraints in~\eqref{our r constraint} is met. 

It remains to solve for the $n-m+1$ variables $\gamma_m, \dots, \gamma_n$. By comparing the right-hand sides of~\eqref{our D constraint} and~\eqref{linear system 1}, it follows that
\begin{equation}\label{linear system 2}
  \sum_{j = m}^{i-1} (2i - j - 1)\gamma_j = i - m  \qquad \textrm{for $m+1 \leq i \leq n$.}
\end{equation}
To solve this linear system, let $\kappa_{i}$ denote the left-hand side of \eqref{linear system 2} and observe that 
\begin{equation*}
    \kappa_{i+1} + \kappa_{i-1} - 2\kappa_{i} =   (i+1)\gamma_{i}-(i - 2)\gamma_{i-1} \qquad\textrm{ for $m+1 \leq i \leq n-1$,}
\end{equation*}
where $\kappa_m := 0$. On the other hand, by considering the right-hand side of~\eqref{linear system 2}, it is clear that $\kappa_{i+1} + \kappa_{i-1} - 2\kappa_{i} = 0$. Combining these observations gives a recursive relation
\begin{equation*}
\gamma_m := \frac{1}{m+1}, \qquad \gamma_i = \Big(\frac{i-2}{i+1}\Big) \gamma_{i-1} \quad \textrm{for $m + 1 \leq i \leq n$} 
\end{equation*}
and from this one deduces that 
\begin{equation*}
    \gamma_j = \frac{1}{m+1} \prod_{i=m}^{j-1} \frac{i-1}{i+2} = \frac{(m-1)m}{(j-1)j(j+1)} \qquad \textrm{for $m \leq j \leq n-1$.}
\end{equation*}

It remains to check that these parameter values give the correct value of $p_n$, corresponding to the exponent featured in Theorem~\ref{main theorem}. It follows from~\eqref{our D constraint} that
\begin{align*}
    \Big(1 - \frac{1}{p_n}\Big)^{-1} &= n -(n-1) (m-1)m\sum_{j=m}^{n-1} \frac{1}{(j-1)j(j+1)} \\
     &= n - \frac{(n-1)n - (m-1)m}{2n}.
\end{align*}
This is smallest when $m=k$, which directly yields the desired range of $p$, as stated in Theorem~\ref{main theorem}, completing the proof. 




\section{Remarks on the numerology and related results}\label{final remarks}

In this section we discuss the relationship between the main result of this paper and the existing literature on the Kakeya set conjecture. The first step is to obtain a more explicit range of exponents for Theorem~\ref{linear theorem}, which is treated in the following subsection. Later, we also discuss applications of the method of this article to certain variants of the Kakeya maximal problem.




 \subsection{Numerology} Recall that the range of exponents in Theorem~\ref{linear theorem} is given by 
\begin{equation}\label{not neat}
   p \geq  1+\min_{2\leq k\leq n}\max\Big\{\, \frac{2n}{n(n-1)+k(k-1)}, \,\frac{1}{n-k+1}\,\Big\}.
\end{equation}
As claimed in the introduction, this guarantees that Conjecture~\ref{maximal conjecture} holds in the range
\begin{equation}\label{neat}
p\ge 1+\frac{1}{2-\sqrt{2}}\frac{1}{n-1}.
\end{equation} 
In fact, in many dimensions a somewhat better bound is obtained. To see this, allowing $k$ to be non-integer for a moment, one finds that the minimum value in \eqref{not neat} is attained when $k = k_1$ where $k_1 = k_1(n)$ is chosen so that
$$\frac{2n}{n(n-1)+k_1(k_1-1)}=\frac{1}{n- k_1+1}.$$
Solving the quadratic, one deduces that
\begin{align*}
 k_1 &= (\sqrt{2}-1)n+\frac{1}{2}+\sqrt{2}n\Big(\big(1+\frac{1}{n}+\frac{1}{8n^2}\big)^{1/2}-1\Big) \\
&\le (\sqrt{2}-1)n+\frac{1}{2}+\frac{1}{\sqrt{2}} +\frac{1}{8\sqrt{2}n},
\end{align*}
where the upper bound follows by Bernoulli's inequality. Let $\tilde{k}_1$ denote the expression appearing on the last line of the above display. Since the sequence $(\sqrt{2} - 1)n$ is equidistributed modulo 1, for any $\varepsilon > 0$ there exist infinitely many values of $n$ for which the interval $[\tilde{k}_1, \tilde{k}_1 + \varepsilon]$ contains an integer. For any such value of $n$ it follows that Conjecture~\ref{maximal conjecture} is true in the range 
$$
p \ge 1+\frac{1}{(2-\sqrt{2})n+\frac{1}{2}-\frac{1}{\sqrt{2}}  -\frac{1}{8\sqrt{2}n}} + \varepsilon.
$$
On the other hand, considering the worst case scenario, when $k$ is not close to an integer, we can at least find an integer in $[k_0,k_0+1]$, with $k_0  = k_0(n)$ chosen so that  
$$\frac{2n}{n(n-1)+k_0(k_0-1)}=\frac{1}{n-(k_0+1)+1}.$$
Calculating $k_0$ and bounding from above using Bernoulli's inequality as before, we find that, in any dimension, 
Conjecture \ref{maximal conjecture} is true in the range 
$$
p \ge 1+\frac{1}{(2-\sqrt{2})n-\frac{1}{2} -\frac{1}{8\sqrt{2}n}}.
$$
This range is always larger than the one stated in \eqref{neat}.




\subsection{Implications for the Kakeya set conjecture} As mentioned in the introduction, a maximal estimate of the form \eqref{maximal inequality} implies that the Hausdorff dimension of any Kakeya set must be greater than or equal to $p'$, where $1/p+1/p'=1$. It is instructive to compare the Hausdorff dimension bounds obtained from Theorem~\ref{linear theorem} with the current best known high dimensional results on the Kakeya set conjecture due to Katz and Tao~\cite{KT2002}. In particular, in \cite{KT2002} it was shown that Kakeya sets in $\mathbb{R}^n$ have Hausdorff dimension greater than or equal to $(2-\sqrt{2})(n-4)+3$.\footnote{This is an improved range over what can be obtained directly from the maximal estimate in \cite{KT2002}; an even larger bound for the Minkowski dimension is obtained in \cite{KT2002} for dimensions $n\ge 24$.} Considering the best case scenario from the previous section, we are able to obtain the following improvement. 

\begin{corollary} For every $\varepsilon > 0$ there exists an infinite sequence of dimensions $n$ such that every Kakeya set $K \subseteq  \R^n$ satisfies
\begin{equation*}
  \dim_H K \geq (2-\sqrt{2})n+\frac{3}{2}-\frac{1}{\sqrt{2}} - \varepsilon.
\end{equation*}
\end{corollary}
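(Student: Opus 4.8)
The plan is to deduce this corollary directly from Theorem~\ref{linear theorem} by invoking the standard implication ``$(\mathrm{K}_p)$ $\Rightarrow$ Kakeya sets have dimension $\geq p'$'' (recorded in the introduction, with references to \cite{Bourgain1991a, Wolff1999, KT2002b}), together with the numerological analysis of the ``best case scenario'' already carried out in the preceding subsection. So the work is purely a matter of converting the exponent bound \eqref{linear exponent} into a Hausdorff dimension bound and then extracting the claimed clean form.

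First I would recall that if \eqref{maximal inequality} holds for a given exponent $p$, then every Kakeya set in $\R^n$ has Hausdorff dimension at least the conjugate exponent $p'$. Thus it suffices to show that, along an infinite sequence of dimensions $n$, Theorem~\ref{linear theorem} gives \eqref{maximal inequality} for some $p = p(n)$ with
\[
p'(n) \geq (2-\sqrt{2})n + \tfrac{3}{2} - \tfrac{1}{\sqrt{2}} - \varepsilon.
\]
From the numerology subsection, for infinitely many $n$ (those for which $[\tilde{k}_1, \tilde{k}_1 + \varepsilon']$ contains an integer, using equidistribution of $(\sqrt{2}-1)n$ mod $1$) one has that Conjecture~\ref{maximal conjecture} holds in the range
\[
p \geq 1 + \frac{1}{(2-\sqrt{2})n + \frac{1}{2} - \frac{1}{\sqrt{2}} - \frac{1}{8\sqrt{2}n}} + \varepsilon'.
\]
Taking $p = p(n)$ to be (slightly larger than) the right-hand side and computing $p' = p/(p-1)$, one finds
\[
p'(n) = 1 + \frac{1}{p(n) - 1} \geq 1 + (2-\sqrt{2})n + \tfrac{1}{2} - \tfrac{1}{\sqrt{2}} - \tfrac{1}{8\sqrt{2}n} - O(\varepsilon') \, (\text{after absorbing error terms}),
\]
which rearranges to $(2-\sqrt{2})n + \tfrac{3}{2} - \tfrac{1}{\sqrt{2}} - \tfrac{1}{8\sqrt{2}n} - O(\varepsilon')$. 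Since $\tfrac{1}{8\sqrt{2}n} \to 0$, for $n$ large along this sequence and $\varepsilon'$ chosen small relative to $\varepsilon$, this is $\geq (2-\sqrt{2})n + \tfrac{3}{2} - \tfrac{1}{\sqrt{2}} - \varepsilon$, as required.

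The only genuinely substantive point, which is not really an obstacle since it is done above, is the careful bookkeeping of the $\varepsilon'$ and $\tfrac{1}{8\sqrt{2}n}$ error terms through the reciprocal map $p \mapsto p'$: one must check that the linear-in-$n$ main term is unaffected and that the constant term comes out to exactly $\tfrac{3}{2} - \tfrac{1}{\sqrt{2}}$. The $+1$ shift between the ``$p \geq 1 + \cdots$'' form of \eqref{linear exponent} and the ``$p' \geq 1 + \cdots$'' form of the dimension bound is exactly what turns $\tfrac{1}{2} - \tfrac{1}{\sqrt{2}}$ into $\tfrac{3}{2} - \tfrac{1}{\sqrt{2}}$. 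Finally, I would note (as in the footnote comparison with \cite{KT2002}) that this beats the Katz--Tao bound $(2-\sqrt{2})(n-4) + 3 = (2-\sqrt{2})n + 4\sqrt{2} - 5$ since $\tfrac{3}{2} - \tfrac{1}{\sqrt{2}} > 4\sqrt{2} - 5$, which justifies calling it an improvement. This completes the proof.
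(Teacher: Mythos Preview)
Your proposal is correct and matches the paper's (implicit) derivation: the corollary is stated without a separate proof, as an immediate consequence of the ``best case scenario'' numerology combined with the standard implication from $(\mathrm{K}_p)$ to Hausdorff dimension $\geq p'$. One cleanup: rather than inverting the displayed bound $p \geq 1 + 1/A + \varepsilon'$ (which, as written, would produce an $O(A^2\varepsilon') = O(n^2\varepsilon')$ error in $p'$, not $O(\varepsilon')$), it is cleaner to apply Theorem~\ref{linear theorem} directly at the exact endpoint $p = 1 + \tfrac{1}{n-k+1}$ for the chosen integer $k \in [\tilde{k}_1,\tilde{k}_1+\varepsilon']$, giving $p' = n-k+2 \geq (2-\sqrt{2})n + \tfrac{3}{2} - \tfrac{1}{\sqrt{2}} - \tfrac{1}{8\sqrt{2}n} - \varepsilon'$ on the nose.
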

Provided $\varepsilon > 0$ is sufficiently small, this bound is stronger than that obtained by Katz--Tao \cite{KT2002}. On the other hand, the Hausdorff dimension bound provided by Theorem~\ref{linear theorem} is also weaker than the result \cite{KT2002} for infinitely many dimensions. In our worst case scenario, arguing as in the previous subsection, given any $\varepsilon > 0$ we can find infinitely many dimensions $n$ for which our results do not provide a better bound than
\begin{equation*}
   \dim_H K \geq (2-\sqrt{2})n+\frac{1}{2}  +  \varepsilon.
\end{equation*}
 Provided $\varepsilon > 0$ is sufficiently small, this is strictly worse than the Katz--Tao Hausdorff dimension estimate. See Figure~\ref{dimension} for the state-of-the-art in lower dimensions. It is perhaps interesting that the polynomial partitioning approach of this article yields the same $(2-\sqrt{2})n + O(1)$ numerology as the (completely different) sum-difference approach employed by Katz and Tao \cite{KT2002}.

\renewcommand{\arraystretch}{1.3}
 \begin{figure}
\begin{tabular}{ ||c|c|c||c|c|c|| } 
 \hline
  $n=$ & $\dim_H \ge$  &  & $n=$ & $\dim_H \ge$  &  \\ 
   \hline 
 2 &  \cellcolor{white} 2 &  \cellcolor{white} Davies~\cite{Davies1971}  &   9 &  \cellcolor{yellow!50} $6$ &  \cellcolor{yellow!50} Theorem~\ref{linear theorem}\\ 
  3 & \cellcolor{white} $5/2+\varepsilon$ &  \cellcolor{white} Katz--Zahl~\cite{KZ2019}  &  10 & \cellcolor{white}   $15-6\sqrt{2}$ &  \cellcolor{white} Katz--Tao~\cite{KT2002}  \\
 4 &   \cellcolor{white} $3.0858...$ &    \cellcolor{white} Katz--Zahl~\cite{KZ2}  & 11 &  \cellcolor{white} $17-7\sqrt{2}$ &  \cellcolor{white} Katz--Tao~\cite{KT2002}\\  
 5 & \cellcolor{yellow!50} $18/5$ & \cellcolor{yellow!50} Theorem~\ref{linear theorem}    &  12 & \cellcolor{yellow!50}   $31/4$ &  \cellcolor{yellow!50} Theorem~\ref{linear theorem}   \\ 
 6 &  \cellcolor{white} $7-2\sqrt{2}$ &  \cellcolor{white} Katz--Tao~\cite{KT2002}  &    13 &  \cellcolor{white} $21-9\sqrt{2}$ &  \cellcolor{white} Katz--Tao~\cite{KT2002}\\ 
 7 &  \cellcolor{yellow!50} $34/7$ &  \cellcolor{yellow!50} Theorem~\ref{linear theorem}  & 14 & \cellcolor{yellow!50}   $9$ &  \cellcolor{yellow!50} Theorem~\ref{linear theorem}   \\ 
 8 & \cellcolor{white}   $11-4\sqrt{2}$ &  \cellcolor{white} Katz--Tao~\cite{KT2002}   &   15 & \cellcolor{white}   $25-11\sqrt{2}$& \cellcolor{white} Katz--Tao~\cite{KT2002}\\ 
  \hline
\end{tabular}
    \caption{The state-of-the-art for the Kakeya set conjecture in low dimensions. New results are \colorbox{yellow!50}{highlighted.}
    }\label{dimension}\end{figure}




\subsection{Further variants of the Kakeya problem} It is an interesting problem to determine what can be said when the direction-separation hypothesis in Conjecture~\ref{maximal conjecture} is weakened; indeed, results of this kind have greatly influenced the current understanding of the Kakeya conjecture (see, for instance, \cite{Tao}). One classical theorem in this direction is due to Wolff \cite{Wolff1995} and considers families of tubes which satisfy the following hypothesis.

\begin{definition} Let $N \geq 1$ and $\T$ be a family of $\delta$-tubes in $\R^n$. We say that $\T$ satisfies the \emph{$(N)$-linear Wolff axiom} if 
\begin{equation*}
    \#\big\{ T \in \mathbb{T} :  T \subseteq E \big\} \leq N\delta^{-(n-1)} |E|
\end{equation*}
whenever $E \subseteq \R^n$ is a rectangular box of arbitrary dimensions.
\end{definition}

In \cite{Wolff1995}, Wolff showed that the maximal inequality 
\begin{equation}\label{recall maximal inequality}
\Big\| \sum_{T\in\mathbb{T}} \chi_{T}\Big\|_{L^p(\mathbb{R}^n)} \lesssim_{n,\varepsilon} N^{1-1/p} \delta^{-(n-1-\frac{n}{p})-\varepsilon} \Big(\sum_{T \in \T} |T| \Big)^{1/p}
\end{equation}
holds for the restricted range $p \geq \frac{n+2}{n}$ whenever $\T$ satisfies the $(N)$-linear Wolff axiom.\footnote{Strictly speaking, Wolff's theorem \cite{Wolff1995} holds under a slightly less restrictive condition referred to simply as \emph{the Wolff axiom}. See \cite{GZ2018} for a comparison of these conditions.} Furthermore, it is not difficult to see that any direction-separated $\T$ satisfies the $(N)$-linear Wolff axiom for some $N \sim 1$ and so his result provided similar progress for  Conjecture \ref{maximal conjecture}. 

Interestingly, there exist examples of tube families $\T$ in dimensions $n \geq 4$ that satisfy the $(N)$-linear Wolff axiom with $N \sim 1$, but for which \eqref{recall maximal inequality} fails to hold for the whole range $p \geq \frac{n}{n-1}$; see \cite{Tao2005}. In particular, when $n = 4$ one may construct such $\T$ for which~\eqref{recall maximal inequality} is only valid in Wolff's range $p \geq 3/2$. Examples of this kind are not direction-separated and therefore do not provide counterexamples to Conjecture~\ref{maximal conjecture}. 

To go beyond $p \geq 3/2$ in four dimensions, Guth and Zahl \cite{GZ2018} considered families of tubes which satisfy a more restrictive version of the $(N)$-linear Wolff axiom.

\begin{definition}
We say that $\mathbb{T}$ satisfies the {\it $(D,N)$-polynomial Wolff axiom} if  
\begin{equation*}
    \#\big\{ T \in \mathbb{T} :  |T \cap E| \geq \lambda |T| \big\} \leq N\delta^{-(n-1)} \lambda^{-n} |E|
\end{equation*}
whenever $\lambda\ge \delta$ and $E\subseteq \R^n$ is a semialgebraic set of complexity at most $D$.
\end{definition}

In this language, Theorem~\ref{PWA} states that for all $D\in\mathbb{N}$ and all $\varepsilon>0$ there is a constant $C(D,\varepsilon,n)$ such that any direction-separated family $\mathbb{T}$ satisfies the $(D,N)$-polynomial Wolff axiom with $N = C(D,\varepsilon,n)\delta^{-\varepsilon}$. Thus, the  following conjecture of Guth and Zahl~\cite[Conjecture 1.1]{GZ2018} is stronger than the Kakeya maximal conjecture.

\begin{conjecture}[Guth--Zahl \cite{GZ2018}]\label{PWAcon} Let   $p\ge \frac{n}{n-1}$. Then, for all $\varepsilon > 0$, there is a complexity $D = D_{\varepsilon,n} \in \mathbb{N}$ and a constant $C_{\varepsilon,n} >0$ such that
\begin{equation*}
\Big\| \sum_{T\in\mathbb{T}} \chi_T\Big\|_{L^p(\mathbb{R}^n)}\le C_{\varepsilon,n} N^{1-1/p}\delta^{-(n-1 - n/p)-\varepsilon}\Big(\sum_{T \in \T} |T| \Big)^{1/p}
\end{equation*}
whenever $0 < \delta < 1$, $N \geq 1$ and $\mathbb{T}$ satisfies the $(D,N)$-polynomial Wolff axiom.\footnote{Strictly speaking, the conjecture of \cite{GZ2018} is slightly weaker than Conjecture \ref{PWAcon} in some regards and stronger in others.}
\end{conjecture}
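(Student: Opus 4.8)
The plan is to attempt Conjecture~\ref{PWAcon} directly by re-running the chain of arguments in Sections~\ref{reduction section}--\ref{proof section} with the direction-separated hypothesis replaced throughout by the $(D,N)$-polynomial Wolff axiom, tracking the dependence on $N$ at each step. The first point to verify is that the passage from linear to $k$-broad estimates is insensitive to this substitution: the algorithms \texttt{[alg 1]} and \texttt{[alg 2]} of Section~\ref{structure lemma section} are explicitly designed to apply to \emph{arbitrary} families of $\delta$-tubes, and the Bourgain--Guth mechanism of Proposition~\ref{Bourgain--Guth} transfers once one checks the two places where tube geometry enters. In the base case ($\delta\sim1$) the $(D,N)$-axiom (applied with $E$ a fixed unit ball) gives $\#\T\lesssim N$ in place of $\#\T\lesssim1$, which is the source of the factor $N^{1-1/p}$; and the parabolic rescaling $L$ appearing in~\eqref{this} sends a family satisfying the $(D,N)$-polynomial Wolff axiom to a family of $\tilde\delta$-tubes satisfying a $(D',N')$-polynomial Wolff axiom with $D'\lesssim_D1$ and $N'\sim N$, since an affine change of variables preserves semialgebraic complexity up to a constant and distorts Lebesgue measure compatibly with the $\delta\mapsto\tilde\delta=\beta^{-1}\delta$ normalisation. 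Hence it suffices to prove the $k$-broad estimate~\eqref{broad estimate}, with an extra $N^{1-1/p}$ on the right, for families satisfying the $(D,N)$-polynomial Wolff axiom, in the full range $p\ge\max\{\tfrac{n-k+2}{n-k+1},\tfrac{n}{n-1}\}$.

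The only place the direction-separated hypothesis is used in the proof of Theorem~\ref{main theorem} is the estimate for $\#\T[\vec S_\ell]$ in Section~\ref{proof section}, which currently invokes the multiscale polynomial Wolff axioms of Theorem~\ref{mainThm}. The key observation is that the analogue needed here --- a bound
\begin{equation*}
\#\bigcap_{j=k}^m\Big\{T\in\T : |T\cap B_{\lambda_j}\cap N_{\!\rho}\bZ_j|\ge\lambda_j|T|\Big\}\lesssim_{n,d,\varepsilon} N\Big(\prod_{j=k}^{m-1}\frac{\rho}{\lambda_j}\Big)\Big(\frac{\rho}{\lambda_m}\Big)^{n-m}\delta^{-(n-1)-\varepsilon}
\end{equation*}
with a \emph{single} power of $N$ --- follows from Lemma~\ref{mainlem} by a cleaner route than the proof of Theorem~\ref{mainThm} itself. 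Indeed, as in Section~\ref{multiscale subsection}, any tube $T$ in the left-hand intersection has a core line which, for some choice of intervals $I_k,\dots,I_m$ (there are $O_{n,d}(1)$ such tuples), is a valid line for the set $S_m(I_m,2\rho)$; consequently $|T\cap N_{\!\delta}S_m(I_m,2\rho)|\gtrsim_{n,d}\lambda_m|T|$. Since the $\delta$-neighbourhood of the semialgebraic set $S_m(I_m,2\rho)$ is again semialgebraic of complexity $O_{n,d}(1)$, one may apply the $(D,N)$-polynomial Wolff axiom to it at density $\sim\lambda_m$ and then bound its measure by $|S_m(I_m,4\rho)|\lesssim_d\rho^{-\varepsilon}(\prod_{j=k}^{m-1}\rho/\lambda_j)\lambda_m^m\rho^{n-m}$ via Lemma~\ref{mainlem}; summing over the $O_{n,d}(1)$ tuples and using $\rho\ge\delta$ yields exactly the displayed bound. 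This is the natural polynomial-Wolff-axiom substitute for tube counting, in the spirit of how Theorem~\ref{PWA} is used in \cite{Guth2016, Guth2018}.

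Feeding this into the second key estimate of Section~\ref{proof section} verbatim produces, for each admissible choice of exponents $p_k,\dots,p_n$ and weights $\gamma_m,\dots,\gamma_n$, a $k$-broad estimate of the form~\eqref{broad estimate} with the gain $N^{1-1/p}$ --- but only in the range $p\ge 1+\frac{2n}{(n-1)n+(k-1)k}$ dictated by the polynomial partitioning argument; combined with the (already transferred) Bourgain--Guth step this recovers the range~\eqref{linear exponent} of Theorem~\ref{linear theorem}, now for all tube families satisfying the polynomial Wolff axioms. \textbf{The main obstacle is therefore precisely the obstacle to the Kakeya maximal conjecture itself}: the $k$-broad estimate obtained by polynomial partitioning is not known up to the endpoint $p=\tfrac{n}{n-1}$ that Conjecture~\ref{PWAcon} demands --- even for $k=2$, where the Bourgain--Guth range is sharp, the available $2$-broad estimate requires $p\ge 1+\frac{2n}{n(n-1)+2}$, strictly above $\frac{n}{n-1}$ when $n\ge3$ --- and closing this gap would require genuinely new input at the level of the $k$-broad inequality, for instance a more efficient exploitation of the algebraic structure surviving at each stage of \texttt{[alg 2]}, or a stronger multilinear Kakeya-type bound in the spirit of \cite{BCT2006, Guth2010}. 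What the above does establish unconditionally is that every ingredient of the proof of Theorem~\ref{linear theorem} other than the final $k$-broad estimate transfers from direction separation to the polynomial Wolff axiom setting, thereby reducing Conjecture~\ref{PWAcon} to the corresponding $k$-broad estimate (with gain $N^{1-1/p}$) under the $(D,N)$-polynomial Wolff axiom; proving that estimate in the full conjectured range remains open.
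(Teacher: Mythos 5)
You have not proved the statement, but that is not a defect of your argument so much as of the assignment: the statement is Conjecture~\ref{PWAcon}, an open conjecture of Guth and Zahl which this paper records but does not prove. There is therefore no proof in the paper to compare against; the paper's only contribution towards it is the partial result of Theorem~\ref{PW linear theorem}, which establishes the conjecture in the restricted range \eqref{pwa linear exponent} by running the argument of Sections~\ref{reduction section}--\ref{proof section} with only the single-scale polynomial Wolff axiom in place of Theorem~\ref{mainThm}. Your proposal is honest on this point: you correctly identify that the genuine obstruction is the $k$-broad estimate itself, which polynomial partitioning only yields for $p\ge 1+\frac{2n}{(n-1)n+(k-1)k}$, strictly above the conjectured endpoint $\frac{n}{n-1}$ for every $k$ when $n\ge 3$, and you concede that closing this gap requires new input. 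So, judged as a proof of the conjecture, the proposal has an unavoidable gap --- the endpoint $k$-broad inequality --- and no amount of bookkeeping in the transfer from direction separation to the $(D,N)$-axiom removes it.

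Two remarks on the unconditional content of your reduction. First, the routine parts (the base case giving $\#\T\lesssim N$, the stability of the $(D,N)$-axiom under the rescaling $L$ in \eqref{this} with $N'\sim N$, and the fact that \texttt{[alg 1]}/\texttt{[alg 2]} never use direction separation) are in line with how the paper itself obtains Theorem~\ref{PW linear theorem}. Second, your claimed substitute for Theorem~\ref{mainThm} --- applying the $(D,N)$-axiom to the bounded-complexity semialgebraic set $N_{O(\delta)}S_m(I_m,O(\rho))$ at density $\sim_d\lambda_m$ and then invoking Lemma~\ref{mainlem}, so that the multiscale count holds with a \emph{single} power of $N$ --- goes beyond what the paper asserts: Theorem~\ref{PW linear theorem} is stated with the weaker numerology \eqref{pwa linear exponent} precisely because only the single-scale axiom is used there, whereas your claim would upgrade it to the full range \eqref{linear exponent} under the polynomial Wolff axioms. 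The sketch looks plausible (the core segment of any tube in the multiscale intersection lies, over one of the $O_{n,d}(1)$ interval tuples, in $S_m(I_m,O(\rho))$, and Tarski--Seidenberg bounds the complexity of its neighbourhood), but since it is a strengthening of a stated theorem of the paper you should not present it as a corollary-level observation: write out carefully the single-line-for-all-$j$ step, the degenerate regimes $\lambda_m\lesssim_d\delta$ and $\lambda_k\lesssim\rho$, and the passage from $|N_{O(\delta)}S_m|$ to the bound of Lemma~\ref{mainlem}. Even granting all of this, what you obtain is a reduction of Conjecture~\ref{PWAcon} to an endpoint $k$-broad estimate under the $(D,N)$-axiom, not the conjecture itself.
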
 

It is easy to adapt C\'ordoba's $L^2$-argument \cite{Cordoba1977} to prove Conjecture \ref{PWAcon} for $n=2$. Guth and Zahl \cite{GZ2018} showed that in four dimensions, under the polynomial Wolff axioms, the $p \geq 3/2$ bound can be improved to $p\ge 121/81$.\footnote{The original paper \cite{GZ2018} claimed the range $p\ge 85/57$ but contained an arithmetic error, as highlighted in \cite{KZ2019}.}  Later, Katz and Zahl \cite{KZ2019} obtained a slight improvement over the Wolff bound $p \geq 5/3$ for Conjecture~\ref{PWAcon} in three dimensions. In all other dimensions the Wolff bound $p \geq \frac{n+2}{n}$ provides the previous best known result under the polynomial Wolff axioms alone. By carrying out the analysis of this paper, but only using the polynomial Wolff axiom rather than the nested estimates from Theorem \ref{PWA}, one obtains the following range of estimates.

\begin{theorem}\label{PW linear theorem} Conjecture \ref{PWAcon} is true in the range 
\begin{equation}\label{pwa linear exponent}
p \geq 1+\min_{2\le k\le n}\max\Big\{\, \Big(\frac{n}{n-1}\Big)^{n-k}, \,\frac{n-1}{n-k+1}\,\Big\}\frac{1}{n-1}.
\end{equation}
\end{theorem}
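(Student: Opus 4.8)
The plan is to run the proof of Theorem~\ref{linear theorem} essentially verbatim, making the single substitution of Theorem~\ref{PWA} for Theorem~\ref{mainThm}. The crucial structural observation (already emphasised in Section~\ref{structure lemma section}) is that \texttt{[alg 1]}, \texttt{[alg 2]}, the first key estimate and Proposition~\ref{Bourgain--Guth} all apply to \emph{arbitrary} families of $\delta$-tubes: the direction-separation hypothesis enters only in Section~\ref{proof section}, at the step where $\#\T[\vec S_\ell]$ is bounded. So the first task is to state the polynomial-Wolff-axiom analogues of Proposition~\ref{Bourgain--Guth} and Theorem~\ref{main theorem}: both are obtained from the originals by replacing ``direction-separated'' with ``satisfies the $(D,N)$-polynomial Wolff axiom'' for $D=D_{\varepsilon,n}$ sufficiently large and inserting a factor $N^{1-1/p}$ on the right. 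For the Bourgain--Guth reduction one carries $N$ through the induction-on-scale: the base case goes through since $\#\{T\subseteq 3B\}\lesssim N$ for unit balls $B$, and the rescaling $L$ (with $|\det L|=\beta^{-(n-1)}$) preserves the $(D,N)$-polynomial Wolff axiom with the \emph{same} $N$, because the Jacobian cancels the change in $|E|$ and $|L(T)|$ while $L$ maps semialgebraic sets of complexity $\le D$ to semialgebraic sets of complexity $O(D)$. The claimed range for the $k$-broad analogue of Theorem~\ref{main theorem} is $p\ge 1+\tfrac{1}{n-1}\big(\tfrac{n}{n-1}\big)^{n-k}$.

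The proof of this $k$-broad estimate follows Sections~\ref{k-broad norms section}--\ref{proof section} line by line until the point in Section~\ref{proof section} where Theorem~\ref{mainThm} is invoked. There, instead of exploiting tangency to $S_\ell,\dots,S_{n-1}$ simultaneously, one applies the polynomial Wolff axiom to the single semialgebraic set $E=2B_{\delta_\ell}\cap N_{4\delta}S_\ell$, which has complexity $O_{n,\varepsilon}(1)\le D$, with $\lambda=2\delta_\ell$, together with Wongkew's theorem $|E|\lesssim_d \delta_\ell^{\ell}\delta^{n-\ell}$; this yields the weaker but still $\#\T$-free bound
\[
\#\T[\vec S_\ell]\ \lessapprox\ N\,\delta^{-(n-1)}\Big(\tfrac{\delta}{\delta_\ell}\Big)^{n-\ell}\qquad(m\le\ell\le n),
\]
which replaces $\delta^{-(n-1)}\prod_{i=\ell}^{n-1}(\delta_i/\delta)^{-1}$ everywhere. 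The factor $N$ then rides through the rest of Section~\ref{proof section} and emerges as $N^{1-1/p}$ via the $(\max_{O}\#\T[O])^{1-1/p}$ term in~(\ref{reduction to max bound}), with no other loss.

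The only genuine computation is the linear algebra closing Section~\ref{proof section}. Taking the weighted geometric mean over $m\le\ell\le n$ as before, the conditions $Y_i=0$ are unchanged but the conditions $X_i=0$ now read $(1-1/p_{i+1})^{-1}-(1-1/p_i)^{-1}=(n-i)\gamma_i$ for $m\le i\le n-1$ (the $(n-i)\gamma_i$ replacing $\sum_{j=m}^{i}\gamma_j$). Writing $u_i:=(1-1/p_i)^{-1}$ and $\sigma_i:=\sum_{j=m}^{i}\gamma_j$, the $Y$-constraints give $u_i=i-(i-1)\sigma_{i-1}$ (so $p_m=\tfrac{m}{m-1}$); combining with the new $X$-constraints gives $\gamma_i=\tfrac{1-\sigma_{i-1}}{n}$ for $m\le i\le n-1$, hence $1-\sigma_i=\big(\tfrac{n-1}{n}\big)^{i-m+1}$ and $\gamma_n=\big(\tfrac{n-1}{n}\big)^{n-m}$. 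These weights lie in $[0,1]$, sum to $1$, and make $(p_\ell)$ non-increasing, so the choice is admissible, and
\[
\Big(1-\tfrac{1}{p_n}\Big)^{-1}=n-(n-1)\sigma_{n-1}=1+(n-1)\Big(\tfrac{n-1}{n}\Big)^{n-m},\qquad p_n=1+\tfrac{1}{n-1}\Big(\tfrac{n}{n-1}\Big)^{n-m}.
\]
Since \texttt{[alg 2]} always terminates at some dimension $m\ge k$ (by the vanishing property, Lemma~\ref{vanishing lemma}, together with the non-degeneracy hypothesis), and $p_n$ is largest at $m=k$, the $k$-broad estimate holds for $p\ge 1+\tfrac{1}{n-1}\big(\tfrac{n}{n-1}\big)^{n-k}$; the $\varepsilon_{\circ}$-perturbation and interpolation argument for passing to a closed range is identical to that in Section~\ref{proof section}. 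Combining with the polynomial-Wolff-axiom version of Proposition~\ref{Bourgain--Guth}, which requires $p\ge\tfrac{n-k+2}{n-k+1}=1+\tfrac{n-1}{n-k+1}\cdot\tfrac{1}{n-1}$, and optimising over $2\le k\le n$ yields exactly the range~\eqref{pwa linear exponent}.

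The main obstacle is bookkeeping rather than a new idea: one must verify carefully that every use of direction-separation in Sections~\ref{reduction section}--\ref{proof section} is either an appeal to the polynomial Wolff axioms (hence available by hypothesis, once $D$ is taken large enough for all the semialgebraic sets that arise) or a statement that survives unchanged, and that the parameter $N$ propagates cleanly — through the rescaling in Proposition~\ref{Bourgain--Guth} and through the first key estimate — so as to produce the factor $N^{1-1/p}$ and nothing worse.
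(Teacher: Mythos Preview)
Your proposal is correct and is precisely what the paper has in mind: the paper's own ``proof'' is the single sentence ``By carrying out the analysis of this paper, but only using the polynomial Wolff axiom rather than the nested estimates \dots'', and you have faithfully executed that programme, including the modified $X_i$-constraints $(1-1/p_{i+1})^{-1}-(1-1/p_i)^{-1}=(n-i)\gamma_i$, the recursion $\gamma_i=(1-\sigma_{i-1})/n$ leading to $1-\sigma_i=((n-1)/n)^{i-m+1}$, and the check that the rescaling in Proposition~\ref{Bourgain--Guth} preserves the $(D,N)$-polynomial Wolff axiom with the same $N$.
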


\renewcommand{\arraystretch}{1.3}
\renewcommand{\arraystretch}{1.3}
 \begin{figure}
\begin{tabular}{ ||c|c|c||c|c|c|| } 
 \hline
  $n=$ & $p \geq $  &  & $n=$ & $p \geq $  &  \\ 
   \hline 
 \cellcolor{white} 2 & 2 & C\'ordoba~\cite{Cordoba1977}  &  \cellcolor{yellow!50} 9 &  \cellcolor{yellow!50} $1 + 9^4/8^5$ &  \cellcolor{yellow!50} Theorem~\ref{PW linear theorem}\\ 
  \cellcolor{white} 3 & $5/3 - \varepsilon$ &  Katz--Zahl~\cite{KZ2019}  &\cellcolor{yellow!50}  10 & \cellcolor{yellow!50}   $1+10^5/9^6$ &  \cellcolor{yellow!50} Theorem~\ref{PW linear theorem}  \\
\cellcolor{white} 4 &   $121/81$ &   Guth--Zahl~\cite{GZ2018}  & \cellcolor{yellow!50} 11 &  \cellcolor{yellow!50} $7/6$ &  \cellcolor{yellow!50} Theorem~\ref{PW linear theorem}\\  
\cellcolor{yellow!50} 5 & \cellcolor{yellow!50} $1+5^2/4^3$ & \cellcolor{yellow!50} Theorem~\ref{PW linear theorem}    & \cellcolor{yellow!50}  12 & \cellcolor{yellow!50}   $1+12^6/11^7$ &  \cellcolor{yellow!50} Theorem~\ref{PW linear theorem}   \\ 
\cellcolor{white} 6 & 4/3 & Wolff~\cite{Wolff1995}  &   \cellcolor{yellow!50}  13 &  \cellcolor{yellow!50} $8/7$ &  \cellcolor{yellow!50} Theorem~\ref{PW linear theorem}\\ 
 \cellcolor{yellow!50} 7 &  \cellcolor{yellow!50} $1+7^3/6^4$ &  \cellcolor{yellow!50} Theorem~\ref{PW linear theorem}  &  \cellcolor{yellow!50}  14 & \cellcolor{yellow!50}   $1+14^7/13^8$ &  \cellcolor{yellow!50} Theorem~\ref{PW linear theorem}   \\ 
\cellcolor{yellow!50}  8 & \cellcolor{yellow!50}   $1+8^4/7^5$ &  \cellcolor{yellow!50} Theorem~\ref{PW linear theorem}   &  \cellcolor{yellow!50}  15 & \cellcolor{yellow!50}   $1+15^8/14^9$&  \cellcolor{yellow!50} Theorem~\ref{PW linear theorem}\\ 
  \hline
\end{tabular}
    \caption{The current state-of-the-art for Conjecture~\ref{PWAcon} in low dimensions. 
    }
    \label{exponent table}
\end{figure}

The above range of exponents is larger than Wolff's when $n=5$ or $n \geq 7$. To see this, note that for any $0 < r < 1$ there exists some integer $2 \leq k \leq n$ satisfying $k\in[r(n-1)+1,r(n-1)+2)$. Writing the endpoint in \eqref{pwa linear exponent} as $1 + \frac{\alpha_n}{n-1}$, it follows that
\begin{equation*}
\alpha_n < \inf_{0< r < 1}\max\Big\{\,\Big(1+\frac{1}{n-1}\Big)^{(n-1)(1-r)},\, \frac{1}{1-r}\,\Big\} \leq \Omega^{-1}=1.763....  
\end{equation*}
Here the omega constant $\Omega\in(1/2,1)$ is the solution to $e^{\Omega}=\Omega^{-1}$. In particular, Theorem~\ref{PW linear theorem} implies that Conjecture~\ref{PWAcon} is true in the range $p \geq 1+\frac{\Omega^{-1}}{n-1}$, yielding an improvement over Wolff's bound when $n \geq 9$. Calculating the precise value of $p_n$ for lower $n$, we find that Theorem~\ref{PW linear theorem} also improves the state-of-the-art for Conjecture~\ref{PWAcon} in dimensions $n=5, 7, 8$; see Figure~\ref{exponent table} for some explicit values for \eqref{pwa linear exponent}. 




\appendix

\section{Tools from real algebraic geometry}

For the reader's convenience, here we recall the definitions and results from real algebraic geometry that play a role in our arguments in Section~\ref{yes}.

\subsubsection*{Wongkew's theorem} We make considerable use of the following theorem of Wongkew \cite{Wongkew1993} (see also \cite{Guth2016, Zhang2017}), which bounds the volume of neighbourhoods of algebraic varieties.

\begin{theorem}[Wongkew \cite{Wongkew1993}]\label{Wongkew theorem} Suppose $\bZ$ is an $m$-dimensional variety in $\R^n$ with $\deg \bZ \leq d$. For any $0 < \rho \leq \lambda$ and $\lambda$-ball $B_{\lambda}$ the neighbourhood $N_{\!\rho}(\bZ\cap B_{\lambda})$ can be covered by $O_d( (\lambda/\rho)^{m})$ balls of radius $\rho$.
\end{theorem}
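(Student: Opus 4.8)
The final statement to be proved is Theorem~\ref{Wongkew theorem}, Wongkew's theorem itself. Since this is a known result from the literature, my proof proposal would be to sketch the standard argument and then note that for our purposes we simply cite \cite{Wongkew1993}.

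\textbf{Proof proposal.} The plan is to prove the covering statement by a combination of a slicing argument and a degree bound on the number of connected components of real algebraic sets. First I would reduce to the case $\lambda = 1$ by rescaling, so that we must cover $N_{\rho}(\bZ \cap B_1)$ by $O_d(\rho^{-m})$ balls of radius $\rho$. Partition $B_1$ into a grid of $\sim \rho^{-n}$ cubes $Q$ of side length $\sim \rho$. It suffices to show that the number of grid cubes $Q$ which meet $N_\rho(\bZ)$ is $O_d(\rho^{-m})$, since each such $Q$ (suitably dilated) is contained in a $\rho$-ball and the cubes meeting $N_\rho(\bZ \cap B_1)$ are within $O(1)$ grid steps of a cube meeting $\bZ$. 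Thus the heart of the matter is the estimate
\begin{equation*}
\#\{ Q : Q \cap \bZ \neq \emptyset \} \lesssim_d \rho^{-m}.
\end{equation*}

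To establish this, I would argue by induction on the ambient dimension $n$ (or on $m$), slicing $\bZ$ by a pencil of parallel hyperplanes. Fix a generic coordinate direction, say $x_n$, and for each value $x_n = c$ on the grid consider the slice $\bZ_c := \bZ \cap \{x_n = c\}$. For generic $c$ this slice is a variety of dimension $\le m-1$ and degree $\le d$ inside an $(n-1)$-dimensional affine space, so by the inductive hypothesis it meets at most $O_d(\rho^{-(m-1)})$ of the $(n-1)$-dimensional grid cubes in that hyperplane. Summing over the $\sim \rho^{-1}$ grid values of $c$ gives $O_d(\rho^{-m})$ columns of cubes, hence the desired bound for the cubes meeting $\bZ$. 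The base case $m=0$ is immediate from B\'ezout's theorem, since a $0$-dimensional variety of degree $\le d$ consists of at most $d$ points. The one subtlety is handling the non-generic slices and the possibility that $\bZ$ has a component contained in a hyperplane $\{x_n = c\}$; this is dealt with by a standard argument controlling, via the degree, the number of ``bad'' values of $c$ and by choosing the coordinate direction generically (or, following Wongkew, by a more careful stratification using the fact that the set of critical values of the projection $\bZ \to \R$, $\bx \mapsto x_n$, is itself semialgebraic of controlled complexity).

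\textbf{Main obstacle.} The hard part is making the slicing argument uniform in a way that genuinely controls the implicit constant by $d$ alone and handles degenerate slices --- that is, one must bound the number of grid values $c$ for which $\bZ_c$ fails to be a transverse slice of dimension $\le m-1$, and separately handle cube columns arising from lower-dimensional ``edges'' of $\bZ$ that are nearly tangent to the slicing hyperplanes. Wongkew's original treatment organizes this through a careful induction on a Whitney-type stratification, and reproducing it in full would be lengthy. Since Theorem~\ref{Wongkew theorem} is precisely the statement proved in \cite{Wongkew1993} (with closely related arguments in \cite{Guth2016, Zhang2017}), for the purposes of this appendix I would simply record the statement and refer the reader to those sources rather than reproduce the proof.
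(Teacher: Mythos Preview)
Your conclusion matches the paper exactly: the appendix merely records the statement of Wongkew's theorem and refers the reader to \cite{Wongkew1993} (and to \cite{Guth2016, Zhang2017}) without giving any proof. Your additional sketch of the slicing/induction argument is reasonable background but goes beyond what the paper does, so simply citing the references as you propose is the right call.
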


\subsubsection*{The Tarski--Seidenberg projection theorem} A fundamental result in the theory of semialgebraic sets is the Tarski--Seidenberg projection theorem, which is also referred to as ``quantifier elimination''. A useful reference for this material is~\cite{BPR}. 

\begin{theorem}[Tarski--Seidenberg]\label{Tarski}
Let $\Pi $ be the orthogonal projection of $\mathbb{R}^n$ into its first $n-1$ coordinates. Then for every $E\ge 1$, there is a constant $C(n,E) > 0$ so that, for every  semialgebraic $S \subset \mathbb{R}^n$ of complexity at most $E$, the projection $\Pi (S)$ has complexity at most $C(n,E)$. 
\end{theorem}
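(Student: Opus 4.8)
The plan is to establish this effective form of quantifier elimination by the classical route: present $S$ in disjunctive normal form, reduce to a single ``cell'', and analyse the projection via the theory of parametrised signed subresultant sequences, tracking degrees throughout. First I would use the representation \eqref{semialgebraic set}: write $S=\bigcup_{i=1}^r\{P_{i,1}=\cdots=P_{i,s}=0,\,Q_{i,1}>0,\dots,Q_{i,s}>0\}$ with $\sum_{i,j}\deg P_{i,j}+\deg Q_{i,j}\le E$, so that $r,s\le E$ and every polynomial appearing has degree $\le E$. Since $\Pi(S)=\bigcup_{i}\Pi(S_i)$, it suffices to bound the complexity of $\Pi(S_i)$ for a fixed cell $S_i$ and then take the union over the $r\le E$ cells.

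Fixing such a cell, I would regard $P_{i,1}(x',\cdot),\dots,Q_{i,s}(x',\cdot)$ as univariate polynomials in $x_n$ with coefficients in $\R[x']$. The key input (see \cite{BPR}) is that there is a finite family $\mathcal{A}\subset\R[x']$ --- assembled from the $x_n$-leading coefficients of these polynomials, their pairwise resultants, their discriminants, and the signed subresultant coefficients of the relevant pairs --- with the property that on each connected component $W$ of the complement of $Z\big(\prod_{A\in\mathcal A}A\big)$ the following data are constant: the $x_n$-degree of each polynomial, the number of real $x_n$-roots of each polynomial, the interleaving pattern of these roots across the different polynomials, and the sign of each polynomial on every interval between consecutive roots and at $\pm\infty$. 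Hence on $W$ the truth value of ``there exists $x_n$ with $P_{i,j}(x',x_n)=0$ for all $j$ and $Q_{i,j}(x',x_n)>0$ for all $j$'' is constant, so $\Pi(S_i)\cap W$ is either all of $W$ or empty. Treating separately each of the finitely many strata cut out of $\R^{n-1}$ by prescribing which subset of the $x_n$-leading coefficients vanishes (on each such stratum the subresultant sequences are recomputed in the appropriate lower degree), and adding back the lower-dimensional boundary pieces contained in $Z(\prod\mathcal A)$, one concludes that $\Pi(S_i)$ is a Boolean combination of the sign conditions $\{A>0\}$, $\{A=0\}$, $\{A<0\}$ with $A\in\mathcal A$, and therefore semialgebraic.

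It remains to bound the complexity. Each $A\in\mathcal A$ is a minor of a Sylvester-type matrix of size $\le 2E$ whose entries are coefficients of the $P_{i,j},Q_{i,j}$ --- polynomials in $x'$ of degree $\le E$ --- so $\deg A\lesssim E^2$; and $\#\mathcal A$ is bounded in terms of the number of original polynomials ($\le 2rs\le 2E^2$) and of $E$ alone. A Boolean combination of $M$ sign conditions on polynomials of degree $\le\Delta$ can be rewritten in the form \eqref{semialgebraic set} with complexity bounded in terms of $M$ and $\Delta$ (crudely $\lesssim 2^{M}\Delta$, or polynomially in $M$ and $\Delta$ after a cylindrical refinement), so the total complexity of $\Pi(S)$ is bounded by some $C(n,E)<\infty$ depending only on $n$ and $E$, as claimed; one can in fact arrange $C(n,E)$ to be polynomial in $E$ using Ben-Or--Kozen--Reif-type sign-determination, but finiteness is all the statement requires.

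The part I expect to be the main obstacle is the middle step: making rigorous the assertion that the combinatorial type of the parametrised family $\{P_{i,j}(x',\cdot),Q_{i,j}(x',\cdot)\}$ is governed precisely by the vanishing of the explicit auxiliary family $\mathcal A$, and in particular handling the degenerations where an $x_n$-leading coefficient vanishes, the degree drops, and the subresultant data must be recomputed on the corresponding stratum. This is exactly the content of the theory of signed subresultant sequences and the parametrised Sturm--Tarski algorithm; once that machinery is set up, everything else is bookkeeping of degrees.
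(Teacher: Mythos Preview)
The paper does not prove this theorem at all: it is stated in the appendix as a known tool with the sentence ``A useful reference for this material is~\cite{BPR}'' and nothing more. So there is no proof in the paper to compare against; the authors simply cite Basu--Pollack--Roy and move on.

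Your sketch is a reasonable outline of the classical effective quantifier elimination argument (parametrised subresultant sequences, sign-invariant decomposition of the parameter space, degree bookkeeping), and is in the spirit of the very reference the paper cites. If your goal is only to match the paper, you should simply quote the result and cite \cite{BPR}. If your goal is to actually supply a self-contained proof, be aware that what you have written is a high-level plan rather than a proof: the passage ``the combinatorial type \dots\ is governed precisely by the vanishing of the explicit auxiliary family $\mathcal{A}$'' and the handling of the degree-drop strata are exactly where the substantial work lies, and you have flagged this yourself. Filling that in correctly would amount to reproducing a nontrivial chapter of \cite{BPR}, which is well beyond what the paper asks for.
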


We repeatedly use Theorem~\ref{Tarski} to form semialgebraic \textit{sections} of semialgebraic sets.

\begin{corollary}\label{Tarski corollary} Let $S \subset \mathbb{R}^{2n}$ be a compact semialgebraic set of complexity at most $E$. Let~$\Pi$ be the orthogonal projection into the final $n$ coordinates 
$(\mathbf{a},\mathbf{d}) \mapsto \mathbf{d}.$
Then there is a constant $C(n,E)>0$, depending only on $n$ and $E$, and a semialgebraic set $Z$, of complexity at most~$C(n,E)$, so that
$$Z \subset S,\quad\quad
\Pi(Z)= \Pi(S),$$
and so that for each $\mathbf{d},$ there is at most one $\mathbf{a}$ with
$(\mathbf{a},\mathbf{d}) \in Z.$
\end{corollary}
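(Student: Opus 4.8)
The plan is to deduce Corollary~\ref{Tarski corollary} from the Tarski--Seidenberg projection theorem (Theorem~\ref{Tarski}) by a ``graph of the lexicographically first point'' construction, carried out one coordinate at a time. First I would set up notation: write $(\mathbf{a},\mathbf{d}) \in \R^{2n}$ with $\mathbf{a} = (a_1,\dots,a_n)$, and let $\Pi(\mathbf{a},\mathbf{d}) = \mathbf{d}$. The key observation is that the fibre $S_{\mathbf{d}} := \{\mathbf{a} : (\mathbf{a},\mathbf{d}) \in S\}$ is a nonempty compact semialgebraic subset of $\R^n$ whenever $\mathbf{d} \in \Pi(S)$, so it has a well-defined lexicographically smallest element $\mathbf{a}(\mathbf{d})$; the set $Z := \{(\mathbf{a}(\mathbf{d}),\mathbf{d}) : \mathbf{d} \in \Pi(S)\}$ then visibly satisfies $Z \subseteq S$, $\Pi(Z) = \Pi(S)$, and has exactly one $\mathbf{a}$ over each $\mathbf{d}$. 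Everything reduces to checking that $Z$ is semialgebraic of controlled complexity.

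The main step is to express $Z$ by a first-order formula over $S$ and then eliminate quantifiers using Theorem~\ref{Tarski}. Explicitly, $(\mathbf{a},\mathbf{d}) \in Z$ if and only if $(\mathbf{a},\mathbf{d}) \in S$ and for every $\mathbf{a}' \in \R^n$ with $(\mathbf{a}',\mathbf{d}) \in S$ one has $\mathbf{a} \le_{\mathrm{lex}} \mathbf{a}'$, where $\mathbf{a} \le_{\mathrm{lex}} \mathbf{a}'$ is itself a finite boolean combination of polynomial (in)equalities in $2n$ variables: $a_1 < a_1'$, or ($a_1 = a_1'$ and $a_2 < a_2'$), or \dots, or $\mathbf{a} = \mathbf{a}'$. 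Thus $Z$ is defined by a formula of the shape $\Phi(\mathbf{a},\mathbf{d}) \equiv \big[(\mathbf{a},\mathbf{d}) \in S\big] \wedge \forall \mathbf{a}'\, \Psi(\mathbf{a},\mathbf{a}',\mathbf{d})$, where $\Psi$ is quantifier-free of complexity bounded in terms of $E$ and $n$. Writing $\forall \mathbf{a}'$ as $\neg\, \exists \mathbf{a}'\, \neg\,\Psi$ and invoking Theorem~\ref{Tarski} $n$ times to eliminate the $n$ coordinates of $\mathbf{a}'$ (together with the fact that complements and finite unions/intersections of semialgebraic sets of bounded complexity again have bounded complexity) shows $Z$ is semialgebraic with complexity at most some $C(n,E)$. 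Uniqueness of $\mathbf{a}$ over each $\mathbf{d}$ is immediate since the lexicographic minimum is unique, and compactness of $S$ is exactly what guarantees each nonempty fibre attains its lexicographic infimum so that $\Pi(Z) = \Pi(S)$ holds with equality rather than mere density.

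The step I expect to require the most care is the bookkeeping of complexity through the quantifier eliminations: each application of Theorem~\ref{Tarski} in principle needs the input set to be presented as a projection from one dimension higher, and one must track that intersecting with $S$, forming the lexicographic comparison set, complementing, and projecting $n$ times all keep the complexity bounded by a constant depending only on $n$ and $E$. This is routine given the stated form of Theorem~\ref{Tarski} (closure of bounded-complexity semialgebraic sets under boolean operations is standard, e.g.\ from~\cite{BPR}), but it is the place where one has to be explicit. A minor additional point is that Theorem~\ref{Tarski} as stated eliminates only the \emph{first} coordinate, so to eliminate the auxiliary variables $\mathbf{a}'$ one should first permute coordinates (a trivial linear, hence complexity-preserving, change of variables) so that the variable to be projected away sits in the first slot; iterating this $n$ times disposes of all of $\mathbf{a}'$ and yields the claimed bound.
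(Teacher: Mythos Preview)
Your proposal is correct and is precisely the standard lexicographic-section argument that underlies the result. The paper does not give its own proof but simply cites \cite{KR2018} (Lemma~2.2 there), noting that the corollary is a direct consequence of Theorem~\ref{Tarski}; your construction of the graph of the lexicographically smallest point of each fibre, followed by quantifier elimination, is exactly how that deduction is carried out.
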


This is Lemma 2.2 from \cite{KR2018}. It is a direct consequence of Theorem~\ref{Tarski}, as discussed in \cite{KR2018}. 

\subsubsection*{Gromov's algebraic lemma} The final key tool is the existence of useful parameterisations of semialgebraic sets, as guaranteed by the following lemma. 

\begin{lemma}[Gromov] \label{Gromov} 
For all integers $E, n, r \geq 1$, there exists $M(E, n, r) < \infty$ with the following
properties. For any compact semialgebraic  set $A \subset [0,1]^n$ of dimension $m$ and complexity at most~$E$, there exists
an integer $N\le M(E,n,r)$ and $C^r$ maps $\phi_1,\dots,\phi_N: [0,1]^m \longrightarrow [0,1]^n$ such that
$$\bigcup_{j=1}^N  \phi_j ([0,1]^m) =A\quad \text{and}\quad \|\phi_j\|_{C^r}:=\max_{|\alpha|\le r}\|\partial^\alpha\phi_j\|_{\infty} \leq 1.$$
\end{lemma}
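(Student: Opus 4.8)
The plan is to prove the lemma by a double induction, following the classical Yomdin--Gromov strategy: an outer induction on the dimension $m$ of the semialgebraic set $A$ and an inner induction on the smoothness order $r$. The two structural inputs are: (i) \emph{effective cylindrical cell decomposition} --- any semialgebraic $A\subseteq[0,1]^n$ of complexity at most $E$ can be partitioned into at most $C_1(E,n)$ semialgebraic cells, each of complexity at most $C_2(E,n)$, where (after a permutation of coordinates) each cell is the graph of a continuous semialgebraic function $g\colon C\to[0,1]$ over a cell $C\subseteq[0,1]^{n-1}$ of dimension $m'\le m$; this follows by iterating quantifier elimination (Theorem~\ref{Tarski}) together with Milnor--Thom-type bounds on the number of connected components of semialgebraic sets, which is what keeps the number and complexity of cells under control; and (ii) a uniform bound on the number of zeros of the derivatives of a one-variable semialgebraic function, again via B\'ezout/Milnor--Thom. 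Since it suffices to reparametrise each cell separately and take the union of the resulting maps, one is reduced to the case of a single graph cell.

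For a graph cell over $C$, apply the outer inductive hypothesis in dimension $m'$ to reparametrise $C$ by maps $\psi_\nu\colon[0,1]^{m'}\to C$ with $\|\psi_\nu\|_{C^r}\le1$; it then remains to reparametrise $x\mapsto\big(\psi_\nu(x),g(\psi_\nu(x))\big)$, whose first $n-1$ components are already controlled, so the issue is the single bounded semialgebraic function $h:=g\circ\psi_\nu\colon[0,1]^{m'}\to[0,1]$, whose derivatives may blow up near the boundary. The heart of the argument is the quantitative one-variable sublemma: a semialgebraic $h\colon[0,1]\to[0,1]$ of complexity at most $E'$ can be reparametrised by at most $C_3(E',r)$ maps $\theta_\mu\colon[0,1]\to[0,1]$ with $\|\theta_\mu\|_{C^r}\le1$ and $\|h\circ\theta_\mu\|_{C^r}\le1$. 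One proves this by first subdividing $[0,1]$ at the boundedly many zeros of $h',h'',\dots,h^{(r)}$, so that on each subinterval every such derivative is monotone and of constant sign; on each subinterval one then applies a substitution of the form $t\mapsto a+(b-a)t^{k}$ (or an iterated version, with $k$ depending on $r$) which flattens $h$ near the endpoints, and, using the chain rule together with the monotonicity and sign information, one checks that after a final fixed rescaling all derivatives of order $\le r$ fall below $1$. The inner induction on $r$ is exactly what permits the choice of substitution and the estimate on the top-order derivative.

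To pass from one variable to $m'$ variables one treats the variables one at a time: freezing all but one variable of $h$ reduces to the sublemma, and an induction (again on $r$, controlling mixed partials via Fa\`a di Bruno / the multivariate chain rule) propagates the bounds on pure derivatives to bounds on all derivatives of order $\le r$. Composing the reparametrisations of the base $C$ with those coming from $h$ yields the required maps $\phi_j\colon[0,1]^m\to[0,1]^n$, and $N$ --- being a product of the boundedly many choices made at each stage --- is bounded by some $M(E,n,r)$. The main obstacle is the quantitative smoothing step: controlling the growth of the $r$-th order derivative under the power substitutions uniformly, and making the inner induction on $r$ close, is the delicate technical core of the lemma; the several-variable case compounds this by requiring simultaneous control of all mixed partials while rescaling only one coordinate at a time.
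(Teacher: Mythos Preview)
The paper does not prove this lemma at all: it is stated in the appendix as a known result, with the comment ``This result was originally stated by Gromov. Detailed proofs were later given by Pila and Wilkie~\cite{PW} and Burguet~\cite{Burguet}.'' So there is no proof in the paper to compare against.

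That said, your outline is a faithful sketch of the classical Yomdin--Gromov strategy as carried out in those references, so in that sense you are following the ``intended'' route. As a plan it is broadly correct: cylindrical cell decomposition to reduce to graph cells, the outer induction on the base dimension, and the one-variable reparametrisation sublemma promoted to several variables one coordinate at a time. The genuinely hard parts you have correctly flagged rather than resolved --- in particular, closing the inner induction on $r$ for the one-variable sublemma (the power substitutions $t\mapsto t^k$ alone are not quite enough; one typically needs a more careful interval subdivision and an interpolation/Markov-type inequality to control the top derivative in terms of lower ones), and the propagation to mixed partials in the multivariable step, which in Burguet's treatment requires a somewhat delicate inductive bookkeeping rather than a direct Fa\`a di Bruno computation. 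Your sketch would need substantial work at exactly those points to become a proof, but as a roadmap it matches the literature.
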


 This result was originally stated by Gromov. Detailed proofs were later given by Pila and Wilkie~\cite{PW} and Burguet~\cite{Burguet}.




\end{document}